\documentclass[11pt, twoside]{article}

\usepackage{amssymb}
\usepackage{amsmath}
\usepackage{amsthm}
\usepackage{color}
\usepackage{mathrsfs}

\allowdisplaybreaks

\pagestyle{myheadings}\markboth{Yiyu Liang, Yoshihiro Sawano, Tino Ullrich,
Dachun Yang and Wen Yuan}{New Characterizations of
Besov-Triebel-Lizorkin-Hausdorff Spaces}

\textwidth=15.2cm
\textheight=21.6cm
\oddsidemargin 0.35cm
\evensidemargin 0.35cm

\parindent=13pt

\def\rr{{\mathbb R}}
\def\rn{{{\rr}^n}}
\def\zz{{\mathbb Z}}

\def\nn{{\mathbb N}}

\def\cg{{\mathcal G}}

\def\cq{{\mathcal Q}}

\def\cs{{\mathcal S}}

\def\fz{\infty}
\def\az{\alpha}
\def\bz{\beta}
\def\dz{\delta}

\def\ez{\epsilon}

\def\lz{\lambda}

\def\oz{{\omega}}

\def\vz{\varphi}

\def\lf{\left}
\def\r{\right}

\def\hs{\hspace{0.25cm}}
\def\ls{\lesssim}
\def\gs{\gtrsim}
\def\tr{\triangle}

\def\noz{\nonumber}
\def\wz{\widetilde}

\def\ev{:=}
\def\st{\subset}

\def\gfz{\genfrac{}{}{0pt}{}}

\def\supp{\mathop\mathrm{\,supp\,}}

\def\lp{{L^p(\rn)}}
\def\dt{\,\frac{dt}t}

\newcommand{\dbt}{{\dot{B}_{p,q}^{s,\tau}(\rn)}}
\newcommand{\dft}{{\dot{F}_{p,q}^{s,\tau}(\rn)}}
\newcommand{\ft}{{F_{p,q}^{s,\tau}(\rn)}}

\newcommand{\dlt}{{\dot{L}_{p,q,a}^{s,\tau}(\mathcal{G})}}
\newcommand{\dpt}{{\dot{P}_{p,q,a}^{s,\tau}(\mathcal{G})}}

\newcommand{\dlh}{{L\dot{H}_{p,q,a}^{s,\tau}(\mathcal{G})}}
\newcommand{\dph}{{P\dot{H}_{p,q,a}^{s,\tau}(\mathcal{G})}}

\newcommand{\dbh}{{B\dot{H}_{p,q}^{s,\tau}(\rn)}}
\newcommand{\dfh}{{F\dot{H}_{p,q}^{s,\tau}(\rn)}}

\newcommand{\dbhr}{B\dot{H}_{p,q}^{s,\tau}(\rn)}
\newcommand{\dfhr}{F\dot{H}_{p,q}^{s,\tau}(\rn)}

\def\bmo{{{\mathop\mathrm {BMO}\,(\rn)}}}

\newcommand{\N}{{\mathbb N}}
\newcommand{\R}{{\mathbb R}}

\newcommand{\Z}{{\mathbb Z}}

\newcommand{\CoY}{\ensuremath{\mbox{Co}Y}}
\newcommand{\Co}{\ensuremath{\mbox{Co}}}

\newtheorem{theorem}{Theorem}[section]
\newtheorem{lemma}[theorem]{Lemma}
\newtheorem{corollary}[theorem]{Corollary}
\newtheorem{proposition}[theorem]{Proposition}
\theoremstyle{definition}
\newtheorem{remark}[theorem]{Remark}
\newtheorem{definition}[theorem]{Definition}

\numberwithin{equation}{section}

\begin{document}

\arraycolsep=1pt

\title{\bf\Large New Characterizations of
Besov-Triebel-Lizorkin-Hausdorff Spaces Including
Coorbits and Wavelets\footnotetext{\hspace{-0.35cm}
This work is partially supported by the GCOE program
of Kyoto University and 2010 Joint Research Project Between
China Scholarship Council and German Academic Exchange
Service (PPP) (Grant No. LiuJinOu [2010]6066). Yoshihiro Sawano is supported
by Grant-in-Aid for Young Scientists (B) (Grant No. 21740104) of
Japan Society for the Promotion of Science.
Dachun Yang is supported by the National
Natural Science Foundation (Grant No. 11171027) of China
and Program for Changjiang Scholars and Innovative
Research Team in University of China. Wen Yuan is supported by the National
Natural Science Foundation (Grant No. 11101038) of China.
}}
\author{Yiyu Liang, Yoshihiro Sawano, Tino Ullrich,
Dachun Yang\,\footnote{Corresponding author}\ \ and Wen Yuan}
\maketitle

\noindent{\bf Abstract.} In this paper, the authors establish new
characterizations of the recently introduced Besov-type spaces
$\dot{B}^{s,\tau}_{p,q}({\mathbb R}^n)$ and Triebel-Lizorkin-type
spaces $\dot{F}^{s,\tau}_{p,q}({\mathbb R}^n)$ with $p\in
(0,\infty]$, $s\in{\mathbb R}$, $\tau\in [0,\infty)$, and $q\in
(0,\infty]$, as well as their preduals, the Besov-Hausdorff spaces
$B\dot{H}^{s,\tau}_{p,q}(\R^n)$ and Triebel-Lizorkin-Hausdorff
spaces $F\dot{H}^{s,\tau}_{p,q}(\R^n)$, in terms of the local means,
the Peetre maximal function of local means, and the tent space
(the Lusin area function) in both discrete and continuous types.
As applications, the authors then obtain
interpretations as coorbits in the sense of H. Rauhut in
[Studia Math. 180 (2007), 237-253] and discretizations via
the biorthogonal wavelet bases for the full range of parameters
of these function spaces. Even for some special cases of this setting
such as $\dot F^s_{\infty,q}({\mathbb R}^n)$ for $s\in{\mathbb R}$, $q\in
(0,\infty]$ (including $\mathop\mathrm{BMO}
({\mathbb R}^n)$ when $s=0$ and $q=2$), the $Q$ space
$Q_\alpha ({\mathbb R}^n)$,
the Hardy-Hausdorff space $HH_{-\alpha}({\mathbb R}^n)$ for
$\alpha\in (0,\min\{n/2,1\})$, the Morrey space
${\mathcal M}^u_p({\mathbb R}^n)$ for $1<p\le u<\infty$,
and the Triebel-Lizorkin-Morrey space
$\dot{\mathcal{E}}^s_{upq}({\mathbb R}^n)$
for $0<p\le u<\infty$, $s\in{\mathbb R}$
and $q\in(0,\infty]$,
some of these results are new.

\bigskip

\noindent {\bf Keywords} Besov-type space, Triebel-Lizorkin-type space,
Besov-Hausdorff space, Triebel-Lizorkin-Hausdorff space, tent space,
local means, Peetre maximal function, coorbit, wavelet

\bigskip

\noindent{\bf Mathematics Subject Classification (2010)} Primary 46E35;
Secondary 42B25, 42C40.

\vspace{0.4cm}

\section{Introduction}\label{s1}

In the last few decades, function spaces have been one of the
central topics in modern harmonic analysis, and are widely used in
various areas such as the potential theory, partial differential
equations, and the approximation theory. The Besov-type space
$\dbt$, the Triebel-Lizorkin-type space $\dft$, and their preduals,
the Besov-Hausdorff space $\dbh$ and Triebel-Lizorkin-Hausdorff
space $\dfh$ were recently introduced and investigated in
\cite{yy1,yy2,syy,ysay,ysy}. These spaces establish the connection
between the classical Besov-Triebel-Lizorkin spaces (see, for
example, \cite{t83,fj2,t92}), the $Q$ spaces in \cite{ejpx}, and the
Hardy-Hausdorff spaces in \cite{dx}, which has been posed as an open
question in \cite{dx}. It was shown in \cite{yy1,yy2,syy} that these
spaces unify and generalize many classical function spaces including
classical Besov and Triebel-Lizorkin spaces, Triebel-Lizorkin-Morrey
spaces (see, for example, \cite{tx, st, s09, syy}), $Q$ spaces, and
Hardy-Hausdorff spaces (see also \cite{X1,X2}).

These spaces are usually defined through building blocks constructed
out of a dyadic decomposition of unity on the Fourier side. Several
of the mentioned applications make it necessary to use more general
convolution kernels for the definition of the spaces, especially the
so-called local means of a function are of particular interest. By
the pioneering work of Bui, Paluszy\'nski and Taibleson
\cite{BuPaTa96, BuPaTa97}, and later by Rychkov \cite{Ry99a, R99,
R01}, we know that one rather uses kernels satisfying the Tauberian
conditions (see \eqref{3.2} below), for characterizing classical
Besov-Triebel-Lizorkin spaces. This also applies to the setting of
the spaces $\dbt$, $\dft$, $\dbh$ and $\dfh$ considered here, which
represents one of the main results of the paper; see Theorems
\ref{t3.1}, \ref{t3.2}, \ref{t3.3} and \ref{t3.4} below. To establish these results we
use a key estimate (see Lemma \ref{l3.2} below) which was obtained
in \cite{u10} by a variant of a method from Rychkov \cite{R99,R01}
and originally from Str\"omberg and Torchinsky
\cite[Chapter 5]{st89}. In addition we make use of a certain involved
decomposition of $\rn$ into proper sub-cubes in combination with
some localized modifications of the approaches used for Besov and
Triebel-Lizorkin spaces. For the technically more difficult
Hausdorff type spaces $\dbh$ and $\dfh$, we need to incorporate
the geometrical properties of the Hausdorff capacities (see \cite{ysay}
or Lemma \ref{l3.3} below) and make use of the well-known
Aoki-Rolewicz theorem in \cite{ao,Ro57}.
Indeed, theses spaces are known to be quasi-Banach spaces; see
\cite{ysay, ysy}. In \cite{yy3}, equivalent (quasi-)norms of the
above spaces were already established via the discrete local means and
the discrete Peetre maximal function of local means, which closely
follows the idea of Triebel \cite{t88} (see also \cite{t92}).
However, these proofs rely on the fact that the respective function
is contained in the space under consideration. Thus, these
(quasi-)norms can not yet be considered as characterizations of the
considered space.

Motivated by Ullrich \cite{u10}, in this paper, we extend the
discrete characterizations via the local means and the Peetre maximal
function by some further, technically convenient, related
characterizations and its continuous counterparts. In particular, we
establish a characterization via tent spaces (the Lusin area function).
Recall that the tent space was originally invented by Coifman, Meyer
and Stein in \cite{cms} and has nowadays been proved to be a useful
tool in harmonic analysis and partial differential equations.

Despite the fact that Theorems \ref{t3.1}, \ref{t3.2}, \ref{t3.3} and \ref{t3.4} are of
independent interest for the theory of these spaces, our main reason
for establishing these continuous characterizations is the coorbit
space theory for quasi-Banach spaces developed by Rauhut
\cite{ra05-3} (see also Section \ref{Classcoo} below), which is a
continuation of the classical coorbit space theory developed by
Feichtinger and Gr\"ochenig \cite{FeGr86, FeGr89a, FeGr89b, Gr88,
Gr91}. Similar to \cite{u10}, but with the main advantage that we
also incorporate (quasi-)Banach spaces, we interpret the spaces
$\dbt$, $\dft$, $\dbh$ and $\dfh$ as coorbits of certain spaces on
the $ax+b$-group based on Theorems \ref{t3.1}, \ref{t3.2}, \ref{t3.3} and \ref{t3.4}
from Section \ref{s3} of this paper. Here, the main difficulty lies
in the fact that, in the (quasi-)Banach situation, a coorbit space
is defined as a certain retract of a Wiener-Amalgam space; see
\cite{ra05-3}. That is why we introduce four classes of Peetre-type
spaces, $\dlt$, $\dpt$, $\dlh$ and $\dph$, on the $n$-dimensional
$ax+b$-group $\cg$ (see Definitions \ref{d5.1} and \ref{d5.2} below)
in order to recover the spaces $\dbt$, $\dft$, $\dbh$ and $\dfh$.
As a first step we prove that the left and right translations of
$\cg$ are bounded on these Peetre-type spaces; see Propositions
\ref{p5.1} and \ref{p5.2} below. Especially for the Hausdorff-type
spaces this strongly depends on the geometrical properties of
Hausdorff capacities; see Lemmas \ref{5.1} and \ref{l5.2} below.
Let us emphasize once more that we particularly extend the results
in \cite{u10} to the quasi-Banach case, namely, to $\min\{p,q\}<1$.

By combining the interpretation as coorbit spaces in Section
\ref{ax+b} with the abstract discretization results in \cite{ra05-3}
(see also Section \ref{Classcoo} below), we finally obtain the
characterizations by \emph{biorthogonal wavelets} in Section
\ref{s6} for all admissible parameters of these spaces; see Theorems
\ref{t6.1} and \ref{t6.2} below. The main goal is to find
sufficient conditions for admissible wavelets. This reduces to the
task of translating abstract conditions \eqref{4.4} on the used
atoms in Subsection \ref{groupdisc} to this specific setting on the
$ax+b$-group, which results in a Wiener-Amalgam condition analyzed
in Proposition \ref{propwiener}. This condition can be ensured by
controlling the decay of the continuous wavelet transform; see Lemma
\ref{help1} below. Comparing with the wavelet characterizations of
$\dbt$ and $\dft$ obtained in \cite[Theorem 8.3]{ysy}, therein the
parameter $s\in(0,\infty)$, in Theorem \ref{t6.1}, the wavelet
characterizations of these spaces are established for all $s\in\rr$,
while the wavelet characterizations for Besov-Hausdorff spaces
$\dbh$ and Triebel-Lizorkin-Hausdorff spaces $\dfh$ in Theorem
\ref{t6.2} are totally new.

We point out that even for some special cases of $\dbt$, $\dft$,
$\dbh$ and $\dfh$, such as $\dot F^s_{\infty,q}({\mathbb R}^n)$ for
$s\in{\mathbb R}$ and $q\in (0,\infty]$ (including
$\mathop\mathrm{BMO} ({\mathbb R}^n)$ when $s=0$ and $q=2$; see
\cite{jn}), the $Q$ space $Q_\alpha ({\mathbb R}^n)$, the
Hardy-Hausdorff space $HH_{-\alpha}({\mathbb R}^n)$ for $\alpha\in
(0,\min\{n/2,1\})$ (\cite{dx,ejpx,X1,X2}), the Morrey space
${\mathcal M}^u_p({\mathbb R}^n)$ for $1<p\le u<\infty$
(\cite{m38}), and the Triebel-Lizorkin-Morrey space
$\dot{\mathcal{E}}^s_{upq}({\mathbb R}^n)$ for $0<p\le u<\infty$,
$s\in\rr$ and $q\in(0,\infty]$ (\cite{tx, st, s09, syy}), some
results of this paper are new.

We should mention that the Triebel-Lizorkin spaces $\dot{F}^s_{\fz,q}(\rn)$ when $q\in(1,\infty]$
were first considered by Triebel \cite{t83} in 1983. In 1990, Frazier and Jawerth \cite{fj2}
found a more appropriate definition of the spaces
$\dot F^s_{\infty,q}(\rn)$ via certain Carleson measure characterizations,
which had the advantage that it also works for all $q\in(0,\infty].$
Moreover, recently, it was proved in \cite{yy4} that, for all $s\in\rr$,
both the spaces $\dbt$ and $\dft$ coincide with
$\dot{B}^{s+n(\tau-1/p)}_{\infty,\infty}(\rn)$ if $\tau\in(1/p,\fz)$ and $q\in(0,\fz)$,
or $\tau\in[1/p,\infty)$ and $q=\fz$.
In particular, when $s+n(\tau-1/p)>0,$
the spaces $\dbt$ and $\dft$
coincide with H\"older-Zygmund spaces. Thus, Theorems \ref{t3.1}, \ref{t3.2}
and \ref{t6.1} also give new descriptions of these simple spaces, which could be
of interest on their own. Recall that the wavelet characterization of $\bmo$
is known for a long time, which was obtained by
Meyer in \cite[p.\,154, Theorem 4]{M92}.
We give, in Remark \ref{r6.-1} below, a detailed
comparison between \cite[p.\,154, Theorem 4]{M92} and the wavelet
characterization of $\bmo$ obtained as a special case of Theorem \ref{t6.1}(i)
by taking $s=0$, $p\in (0,\fz)$, $\tau=1/p$ and $q=2$.

We finally remark that some of results of this paper may also hold
for inhomogeneous variants of $\dbt$, $\dft$, $\dbh$ and $\dfh$.
Thanks to the recent work on generalized coorbit
space theory for quasi-Banach spaces \cite{sch} this can be done
with a similar method. However, to limit the length of this paper, we will not
go further to the details.

The paper is structured as follows. In Section \ref{s2} we recall
some necessary notation and some basic function spaces and their
properties. Section \ref{s3} is devoted to the spaces $\dbt$,
$\dft$, $\dbh$ and $\dfh$. We give their definitions and equivalent
characterizations via the local means. Section \ref{Classcoo} deals with
the main features from abstract coorbit space theory for
quasi-Banach spaces by recalling the main results from
\cite{ra05-3}. In Section \ref{ax+b}, we apply this abstract setting
to Peetre-type spaces on the $ax+b$-group and recover the spaces in
Section \ref{s3} as coorbits of Peetre-type spaces. Finally, in
Section \ref{s6}, we obtain characterizations with (bi)orthogonal
wavelets based on the results in Section \ref{ax+b}.

\section{Preliminaries\label{s2}}

In Subsection \ref{s2.1}, we recall some necessary notation
and, in Subsection \ref{s2.2}, some basic spaces of functions
and their properties which are used throughout the whole paper.

\subsection{Notation\label{s2.1}}

For all multi-indices $\az:=(\az_1,\cdots,\az_n)
\in (\nn\cup\{0\})^n$, let $\|\az\|_{\ell^1}:=|\az_1|+\cdots+|\az_n|$.
Let $\cs(\rn)$ be the {\it space of all Schwartz functions} on $\rn$
with the classical topology and $\cs'(\rn)$ its \emph{topological
dual space}, namely, the set of all
continuous linear functionals on $\cs(\rn)$ endowed with the weak
$\ast$-topology. Following Triebel \cite{t83}, we set
$$\cs_\infty(\rn):=\lf\{\varphi\in\cs(\rn):\ \int_\rn
\varphi(x)x^\gamma\,dx=0\ \mbox{for all multi-indices}\ \gamma\in
\lf(\nn\cup\{0\}\r)^n\r\}$$ and consider $\cs_\fz(\rn)$ as a
subspace of $\cs(\rn)$, including the topology. Use
$\cs'_\infty(\rn)$ to denote the {\it topological dual space} of
$\cs_\infty(\rn)$, namely, the set of all continuous linear
functionals on $\cs_\fz(\rn)$. We also endow $\cs'_\infty(\rn)$ with
the weak $\ast$-topology. Let $\mathcal{P}(\rn)$ be the {\it set of
all polynomials} on $\rn$. It is well known that
$\cs'_\fz(\rn)=\cs'(\rn)/\mathcal{P}(\rn)$ as topological spaces;
see, for example, \cite[Proposition 8.1]{ysy}. Similarly, for any
$N\in \nn\cup\{0\}$, the \emph{space} $\cs_N(\rn)$ is defined to be
the set of all Schwartz functions satisfying that $\int_\rn
\varphi(x)x^\gamma\,dx=0$ for all multi-indices $\|\gamma\|_{\ell^1}\le N$ and
$\cs'_N(\rn)$ its \emph{topological dual space}. We also let
$\cs_{-1}(\rn):=\cs(\rn)$. For any $\varphi \in \cs(\R^n)$, we
use $\widehat{\vz}$ or $\mathcal{F}\vz$ to denote its \emph{Fourier
transform}, namely, for all $\xi\in\rn$,
$\widehat{\vz}(\xi):=\mathcal{F}\vz(\xi) :=\int_\rn e^{-i\xi
x}\vz(x)\,dx$ and let $\varphi_j(x):= 2^{jn}\varphi(2^jx)$ for
all $j\in\mathbb{Z}$ and $x\in\rn$. Denote by $\mathcal{F}^{-1}\vz$
the \emph{inverse Fourier transform} of $\vz$. Let
$\widetilde{\vz}(x):=\overline{\vz(-x)}$ for all $x\in\rn$.

Throughout the whole paper, for all $\vz\in\cs(\rn)$ and distributions $f$ such
that $\vz\ast f$
makes sense, we let, for all $t\in(0,\infty)$, $k\in\zz$, $a\in(0,\fz)$
and $x\in\rn$,
$$(\vz_t^*f)_a(x):=\sup_{y\in\rn}\frac{|\vz_t\ast f(x+y)|}{(1+|y|/t)^a}
\ \text{and}\ (\vz_k^*f)_a(x):=\sup_{y\in\rn}\frac{|\vz_k\ast f(x+y)|}{(1+2^k|y|)^a},$$
which are called the \emph{Peetre-type maximal functions}.
In view of the above notation, we see that
$(\vz_k^*f)_a(x)=(\vz_{2^{-k}}^*f)_a(x)$. Since this difference
is always made clear in the context, we do not take care of this
abuse of notation.

For all $a,\,b\in\rr$, let $a\vee b:= \max\{a,\,b\}$ and
$a\wedge b:= \min\{a,\,b\}$.  For $j\in\zz$ and $k\in\zz^n$,
denote by $Q_{jk}$ the \textit{dyadic cube} $2^{-j}([0,1)^n+k)$,
$\ell(Q_{jk})$ its \textit{side length}, $x_{Q_{jk}}$ its \textit{lower
left-corner $2^{-j}k$} and $c_{Q_{jk}}$ its \textit{center}. Let
$\mathcal{Q}:= \{Q_{jk}:\ j\in\zz,\ k\in\zz^n\}$,
$\mathcal{Q}_j:=\{Q\in \mathcal{Q}:\ \ell(Q)=2^{-j}\}$
and $j_Q:=-\log_2 \ell(Q)$ for all $Q\in\mathcal{Q}$. When the
dyadic cube $Q$ appears as an index, such as
$\sum_{Q\in\mathcal{Q}}$ and $\{\cdot\}_{Q\in\mathcal{Q}}$, it is
understood that $Q$ runs over all dyadic cubes in $\rn$.

Throughout the whole paper, we denote by $C$ a \emph{positive
constant} which is independent of the main parameters, but it may
vary from line to line, while $C(\az, \bz, \cdots)$ denotes a
\emph{positive constant} depending on the parameters $\az$, $\bz$,
$\cdots$. The \emph{symbol} $A\ls B$ means that $A\le CB$. If $A\ls
B$ and $B\ls A$, then we write $A\sim B$. If $E$ is a subset of
$\rn$, we denote by $\chi_E$ the \emph{characteristic function} of
$E$. For all dyadic cubes $Q \in \mathcal{Q}$ and $r>0$, let $rQ$ be
the \emph{cube concentric with} $Q$ having the side length
$r\ell(Q)$. We also let $\nn:=\{1,\, 2,\, \cdots\}$ and
$\zz_+:=\nn\cup\{0\}$. For any $a\in\rr$, $\lfloor a\rfloor$
denotes the \emph{maximal integer} not larger than $a$.

\subsection{Basic Spaces of Functions\label{s2.2}}

In this subsection, we recall the notions of some basic
spaces of functions and their properties.

For all $p\in(0,\infty]$, the \emph{space} $L^p(\rn)$ is defined to
be the set of all complex-valued measurable functions $f$
(quasi-)normed by
$\|f\|_{\lp}:=\{\int_\rn |f(x)|^p\,dx\}^{1/p}$
with the usual modification when $p=\infty$.

Let $q\in(0,\fz]$ and $\tau\in[0,\fz)$. The \emph{space}
$\ell^q(L^p_\tau(\rn,\zz))$ with $p\in(0,\fz]$ is defined to be the
set of all sequences $G:=\{g_j\}_{j\in\zz}$ of measurable
functions on $\rn$ such that
$$\|G\|_{\ell^q(L^p_\tau(\rn,\zz))}
:= \sup_{P\in\mathcal{Q}}\frac1{|P|^\tau}
\lf\{\sum_{j=j_P}^\fz \lf[\int_P
|g_j(x)|^p\,dx\r]^{q/p}\r\}^{1/q}<\fz.$$
Similarly, the \emph{space} $L^p_\tau(\ell^q(\rn,\zz))$
with $p\in(0, \fz)$ is defined to be the
set of all sequences $G:=\{g_j\}_{j\in\zz}$ of
measurable functions on $\rn$ such that
$$\|G\|_{L^p_\tau(\ell^q(\rn,\zz))}
:= \sup_{P\in\mathcal{Q}}\frac1{|P|^\tau}
\lf\{\int_P \lf[\sum_{j=j_P}^\fz
|g_j(x)|^q\r]^{p/q}\,dx\r\}^{1/p}<\fz.$$

The following conclusions were obtained in \cite[Lemma 2.3]{yy3}.

\begin{lemma}\label{l2.1}
Let $q\in(0,\fz]$, $\tau\in[0,\fz)$ and $\dz\in(n\tau,\fz)$. Suppose
that $\{g_m\}_{m\in\zz}$ is a family of measurable functions on $\rn$. For
all $j\in\zz_+$ and $x\in\rn$, let
$G_j(x)\ev\sum_{m\in\zz}^\fz2^{-|m-j|\dz}g_m(x).$

{\rm (i)} If $p\in(0,\fz)$, then there exists a positive constant $C$,
independent of $\{g_m\}_{m\in\zz}$, such that
$\|\{G_j\}_{j\in\zz}\|_{\ell^q(L^p_\tau(\rn,\zz))}\le C
\|\{g_m\}_{m\in\zz}\|_{\ell^q(L^p_\tau(\rn,\zz))}.$

{\rm (ii)} If $p\in(0,\fz]$, then there exists a positive constant $C$,
independent of $\{g_m\}_{m\in\zz}$, such that
$\|\{G_j\}_{j\in\zz}\|_{L^p_\tau(\ell^q(\rn,\zz))}\le C
\|\{g_m\}_{m\in\zz}\|_{L^p_\tau(\ell^q(\rn,\zz))}.$
\end{lemma}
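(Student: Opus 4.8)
The plan is to prove both (i) and (ii) by the same two-part scale decomposition, carrying out (i) in detail and observing that (ii) is entirely parallel. Fix a dyadic cube $P\in\cq$ and set $A\ev\|\{g_m\}_{m\in\zz}\|_{\ell^q(L^p_\tau(\rn,\zz))}$. The defining norm of $\{G_j\}_j$ only sees the scales $j\ge j_P$ on $P$, whereas each $G_j$ involves $g_m$ for all $m\in\zz$; so the decisive step is to separate the scales finer than $P$ from those coarser than $P$. Accordingly I split $G_j=G_j^{(1)}+G_j^{(2)}$ with $G_j^{(1)}\ev\sum_{m\ge j_P}2^{-|m-j|\dz}g_m$ and $G_j^{(2)}\ev\sum_{m<j_P}2^{-|m-j|\dz}g_m$, and use the ($\min\{1,p\}$- and $\min\{1,q\}$-)quasi-triangle inequalities to reduce to estimating the two pieces separately.

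For the diagonal piece $G_j^{(1)}$ both indices run over $[j_P,\fz)$, so on $P$ the quantities $a_m\ev[\int_P|g_m|^p]^{1/p}$ (for $m\ge j_P$) are convolved against the summable kernel $\{2^{-|m-j|\dz}\}$. A discrete Young inequality (for $q\ge1$), or the elementary bound $(\sum_j(\sum_m 2^{-|m-j|\dz}a_m)^q)^{1/q}\ls(\sum_m a_m^q)^{1/q}$ obtained from $\sum_j 2^{-|m-j|\dz q}<\fz$ (for $q<1$), then gives $\{\sum_{j\ge j_P}[\int_P|G_j^{(1)}|^p]^{q/p}\}^{1/q}\ls\{\sum_{m\ge j_P}a_m^q\}^{1/q}\le A|P|^\tau$; here only $\dz>0$ is used.

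The off-diagonal piece $G_j^{(2)}$ is the main obstacle, and it is exactly where the hypothesis $\dz>n\tau$ enters. For $m<j_P$ let $P_m$ be the unique dyadic cube of side length $2^{-m}$ containing $P$; since $j_{P_m}=m$ is the smallest scale counted on $P_m$, the definition of $A$ yields the coarse-scale bound $[\int_P|g_m|^p]^{1/p}\le[\int_{P_m}|g_m|^p]^{1/p}\le|P_m|^\tau A=2^{-mn\tau}A$. Because $j\ge j_P>m$ forces $|m-j|=j-m$, the (quasi-)triangle inequality then gives $[\int_P|G_j^{(2)}|^p]^{1/p}\ls 2^{-j\dz}\,2^{j_P(\dz-n\tau)}A$, the underlying geometric series $\sum_{m<j_P}2^{m(\dz-n\tau)}$ converging precisely because $\dz>n\tau$. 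Taking the $\ell^q$-norm over $j\ge j_P$ and summing $\sum_{j\ge j_P}2^{-j\dz q}\sim2^{-j_P\dz q}$ produces $\{\sum_{j\ge j_P}[\int_P|G_j^{(2)}|^p]^{q/p}\}^{1/q}\ls 2^{j_P(\dz-n\tau)}2^{-j_P\dz}A=2^{-j_Pn\tau}A=A|P|^\tau$. Dividing by $|P|^\tau$ and taking the supremum over $P$ completes (i).

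For (ii) the argument is identical after moving the $\ell^q$-summation inside the integral: the diagonal piece is handled by the pointwise-in-$x$ $\ell^q$-convolution bound followed by the $L^p(P)$-norm, and for the off-diagonal piece one first applies Minkowski's (or the power-trick) inequality in $\ell^q_j$ to get $[\sum_{j\ge j_P}|G_j^{(2)}(x)|^q]^{1/q}\ls 2^{-j_P\dz}\sum_{m<j_P}2^{m\dz}|g_m(x)|$, then integrates over $P$ using $[\int_P|g_m|^p]^{1/p}\le2^{-mn\tau}A'$ with $A'\ev\|\{g_m\}_m\|_{L^p_\tau(\ell^q(\rn,\zz))}$, the same geometric-series cancellation supplying the factor $|P|^\tau$. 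The one genuinely delicate point throughout is verifying that the coarse-scale bound $[\int_P|g_m|^p]^{1/p}\le2^{-mn\tau}A$ combines with $\dz>n\tau$ to return precisely the normalizing factor $|P|^\tau$; the remaining manipulations are routine quasi-triangle and geometric-series estimates, with the cases $\min\{p,q\}<1$ handled uniformly by replacing the triangle inequalities with their $\min\{1,p\}$- and $\min\{1,q\}$-power counterparts.
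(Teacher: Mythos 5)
Your proof is correct, and it follows essentially the standard argument: the paper itself gives no proof but cites \cite[Lemma 2.3]{yy3}, where the same strategy is used --- split the sum at $m=j_P$, handle the fine scales by the discrete convolution inequality on $\ell^q$ (only $\dz>0$ needed), and handle the coarse scales by passing to the dyadic ancestor $P_m\supset P$ to get $\|g_m\|_{L^p(P)}\le 2^{-mn\tau}A$, with $\dz>n\tau$ making the resulting geometric series sum to $|P|^\tau$. No gaps.
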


Next we recall the Hausdorff-type counterparts of
$\ell^q(L^p_\tau(\rn,\zz))$ and $L^p_\tau(\ell^q(\rn,\zz))$. To this
end, for $x\in\rn$ and $r>0$, let $B(x,\,r):= \{y\in\rn:\
|x-y|<r\}$. For $E \subset \rn$ and $d\in(0,\,n]$, the
\textit{$d$-dimensional Hausdorff capacity} of $E$ is defined by
\begin{equation}\label{2.1}
H^d(E):= \inf \lf\{\sum_jr_j^d:\ E\subset
\bigcup_jB(x_j,\,r_j)\r\},
\end{equation}
where the infimum is taken over all countable coverings
$\{B(x_j,\,r_j)\}_{j=1}^\infty$ of open balls of $E$; see, for
example, \cite{Ad, yy0}. It is well known that $H^d$ is monotone,
countably subadditive and vanishes at the empty set. Moreover, $H^d$
in \eqref{2.1} when $d=0$ also makes sense, and $H^0$ has the
properties that for all non-empty sets $E\subset\rn$, $H^0(E)\ge1$, and
$H^0(E)=1$ if and only if $E$ is bounded and non-empty.
For any function $f: \rn
\to [0,\fz]$, the \textit{Choquet integral} of $f$ with respect to
$H^d$ is then defined by
$$\int_{\rn}f(x)\,d H^d(x):= \int_0^{\fz}
H^d(\{x\in\rn:\ f(x)>\lz\})\,d\lz.$$

In what follows, we write $\R_+^{n+1}:= \rn\times (0,\fz)$. For all $p\in[1,\fz]$,
$p'$ denote the \emph{conjugate number of $p$}, namely, $1/p'+1/p=1$.
For any measurable function $\omega$ on $\R_+^{n+1}$ and $x\in\rn$,
its \textit{nontangential maximal function} $N\omega$ is defined by setting
$$N\omega(x):= \sup_{|y-x|<t} |\,\omega(y,t)|.$$

For $p\in(1,\fz)$ and $\tau\in[0,\fz)$, the \emph{space}
$\widetilde{\ell^q(L^p_\tau(\rn,\zz))}$ with $q\in[1,\fz)$
is defined to be the set of all sequences $G:=\{g_j\}_{j\in\zz}$ of measurable
functions on $\rn$ such that
$$\|G\|_{\widetilde{\ell^q(L^p_\tau(\rn,\zz))}}:= \inf_{\omega}
\lf\{\sum_{j\in\zz} \lf(\int_\rn |g_j(x)|^p[\omega(x,2^{-j})]^{-p}
\,dx\r)^{q/p}\r\}^{1/q}<\fz,$$
and the \emph{space} $\widetilde{L^p_\tau(\ell^q(\rn,\zz))}$
with $q\in(1,\fz)$ the set
of all sequences $G:=\{g_j\}_{j\in\zz}$ of measurable functions
on $\rn$ such that
$$\|G\|_{\widetilde{L^p_\tau(\ell^q(\rn,\zz))}}:= \inf_{\omega}
\lf\{\int_\rn \lf(\sum_{j\in\zz} |g_j(x)|^q[\omega(x,2^{-j})]^{-q}
\r)^{p/q}\,dx\r\}^{1/p}<\fz,$$
where the both infimums are taken over all
nonnegative Borel measurable functions $\omega$ on $\R_+^{n+1}$ satisfying
\begin{equation}\label{2.2}
\int_{\R^n} [N\omega(x)]^{(p \vee q)'} \,dH^{n\tau(p\vee q)'}(x) \le1
\end{equation}
and with the restriction that for any $j\in\zz$, $\omega(\cdot,
2^{-j})$ is allowed to vanish only where $g_j$ vanishes.

As an analogy of Lemma \ref{l2.1}, we have the following conclusions,
which is just \cite[Lemma 3.1]{yy3}.

\begin{lemma}\label{l2.2}
Let $p\in(1,\fz)$, $\delta\in(0,\fz)$ and $\{g_m\}_{m\in\zz}$ be a
sequence of measurable functions on $\rn$. For all
$j\in\zz$ and $x\in\rn$, let $G_j(x):= \sum_{m\in\zz}
2^{-|m-j|\delta} g_m(x)$.

{\rm (i)} If $q\in[1,\fz)$, $\tau\in[0,1/{(p\vee q)'}]$ and
$\delta\in(n\tau,\fz)$, then there exists a positive constant $C$,
independent of $\{g_m\}_{m\in\zz}$, such that
$\|\{G_j\}_{j\in\zz}\|_{\widetilde{\ell^q(L^p_\tau(\rn,\zz))}}\le C
\|\{g_m\}_{m\in\zz}\|_{\widetilde{\ell^q(L^p_\tau(\rn,\zz))}}.$

{\rm (ii)} If  $q\in(1,\fz)$, $\tau\in[0,1/{(p\vee q)'}]$ and
$\delta\in(n\tau,\fz)$, then there exists a positive constant $C$,
independent of $\{g_m\}_{m\in\zz}$, such that
$\|\{G_j\}_{j\in\zz}\|_{\widetilde{L^p_\tau(\ell^q(\rn,\zz))}}\le C
\|\{g_m\}_{m\in\zz}\|_{\widetilde{L^p_\tau(\ell^q(\rn,\zz))}}.$
\end{lemma}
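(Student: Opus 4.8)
The plan is to prove (i) and (ii) simultaneously by reducing both to an \emph{unweighted} discrete convolution inequality after a judicious choice of extremal weight, so that the geometry of the Hausdorff capacity enters only through the admissibility of the new weight. Write $u:=(p\vee q)'$, which is finite since $p>1$, and $d:=n\tau u$; the restriction $\tau\in[0,1/u]$ guarantees $d\in[0,n]$, so that $H^d$ is a genuine monotone and countably subadditive capacity and the Choquet integral in \eqref{2.2} makes sense. Fix $\{g_m\}_{m\in\zz}$ of finite norm and, using the definition of the norm as an infimum, choose a nonnegative Borel function $\omega$ on $\R_+^{n+1}$ that satisfies \eqref{2.2}, vanishes in its $2^{-m}$-slice only where $g_m$ does, and is nearly extremal, i.e. $\{\sum_{m\in\zz}(\int_\rn|g_m(x)|^p\,\omega(x,2^{-m})^{-p}\,dx)^{q/p}\}^{1/q}\le 2\|\{g_m\}\|_{\widetilde{\ell^q(L^p_\tau(\rn,\zz))}}$ for (i), and the corresponding $\widetilde{L^p_\tau(\ell^q(\rn,\zz))}$-bound for (ii). Since $\delta>n\tau$ one may fix $\sigma\in(n\tau,\delta)$, and I would define the candidate weight for $\{G_j\}$ by $\Omega(x,2^{-j}):=\sup_{m\in\zz}2^{-|m-j|\sigma}\omega(x,2^{-m})$, extended to a Borel function on $\R_+^{n+1}$ by setting it constant in $t$ on each dyadic interval $[2^{-j},2^{-j+1})$.

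First I would record the pointwise reduction. From $\Omega(x,2^{-j})\ge2^{-|m-j|\sigma}\omega(x,2^{-m})$ for every $m$, writing $h_m:=|g_m|\,\omega(\cdot,2^{-m})^{-1}$ gives $|G_j(x)|\,\Omega(x,2^{-j})^{-1}\le\sum_{m\in\zz}2^{-|m-j|(\delta-\sigma)}h_m(x)=:H_j(x)$, where $\delta-\sigma>0$. In particular $\Omega(\cdot,2^{-j})$ vanishes only where every $\omega(\cdot,2^{-m})$, hence every $g_m$ and therefore $G_j$, vanishes, so $\Omega$ respects the admissibility restriction. Granting for the moment that $\Omega$ satisfies \eqref{2.2} up to a multiplicative constant, it then remains to bound $\{\sum_j(\int H_j^p)^{q/p}\}^{1/q}$ by $\{\sum_m(\int h_m^p)^{q/p}\}^{1/q}$ for (i), and the $L^p(\ell^q)$-analogue for (ii). As $p>1$ and $q\ge1$ this is a routine unweighted convolution estimate with the summable geometric kernel $\{2^{-|k|(\delta-\sigma)}\}_{k\in\zz}$: one applies Minkowski's inequality in the $L^p$- (resp.\ inner $\ell^q$-) variable and then Young's inequality for sequences. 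Note that the cube-and-$\tau$ structure of the original norms has been completely absorbed into the weight $\omega$ and no longer appears at this stage.

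The main obstacle is to show that $\Omega$ is admissible up to a constant, namely $\int_\rn[N\Omega(x)]^u\,dH^d(x)\lesssim\int_\rn[N\omega(x)]^u\,dH^d(x)\le1$; dividing $\Omega$ by the resulting constant then yields a genuine competitor and completes the proof. For $\gamma\ge1$ let $N_\gamma\omega(x):=\sup_{|y-x|<\gamma t}|\omega(y,t)|$ denote the aperture-enlarged nontangential maximal function, so that $N\omega=N_1\omega$. Unwinding the definition of $\Omega$, for each $\lambda>0$ the set $\{N\Omega>\lambda\}$ is the union, over $j,k\in\zz$, of the $2^{-j}$-neighborhoods of the level sets $\{y:\omega(y,2^{-(j+k)})>\lambda2^{|k|\sigma}\}$, and the point is to organize this union by $k$ rather than crudely by the scale index. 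The terms with $k\le0$ all lie inside $\{N\omega>\lambda\}$, since there the enlarging ball is no larger than the intrinsic scale $2^{-(j+k)}$ while the level $\lambda2^{|k|\sigma}\ge\lambda$. For fixed $k\ge1$, letting $j$ (equivalently $m=j+k$) run reassembles exactly the level set $\{N_{2^k}\omega>\lambda2^{k\sigma}\}$ of a \emph{single} aperture-enlarged maximal function. The geometric properties of the Hausdorff capacity (Lemma \ref{l3.3}; see also \cite{ysay}) give that enlarging the aperture by $2^k$ inflates level sets by at most $H^d(\{N_{2^k}\omega>\mu\})\lesssim2^{kd}H^d(\{N\omega>\mu\})$. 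Applying countable subadditivity only in $k$ (the delicate scale-sum having already been absorbed into the single functions $N_{2^k}\omega$, so that no lossy over-counting occurs) then yields $H^d(\{N\Omega>\lambda\})\lesssim H^d(\{N\omega>\lambda\})+\sum_{k\ge1}2^{kd}H^d(\{N\omega>\lambda2^{k\sigma}\})$. Integrating against $u\lambda^{u-1}\,d\lambda$ and substituting $\nu=\lambda2^{k\sigma}$ in the $k$-th term produces the factor $2^{-k\sigma u}$, whence $\int[N\Omega]^u\,dH^d\lesssim(1+\sum_{k\ge1}2^{-k(\sigma u-d)})\int[N\omega]^u\,dH^d$. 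Since $\sigma u-d=(\sigma-n\tau)u>0$ by the choice $\sigma>n\tau$, the geometric series converges, which is precisely where the hypothesis $\delta>n\tau$ is used; the hypothesis $\tau\le1/u$ is used to keep $d\le n$ throughout, so that the capacity lemma applies. This establishes the capacity bound and, together with the previous two paragraphs, both parts of the lemma.
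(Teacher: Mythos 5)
Your proof is correct. Note that the paper itself gives no argument for this lemma (it is imported verbatim as \cite[Lemma 3.1]{yy3}), so there is no in-paper proof to compare against; but your strategy --- replace a near-extremal weight $\omega$ for $\{g_m\}$ by $\Omega(\cdot,2^{-j}):=\sup_m 2^{-|m-j|\sigma}\omega(\cdot,2^{-m})$ with $n\tau<\sigma<\delta$, verify admissibility of $\Omega$ via Lemma \ref{l3.3} and the convergence of $\sum_k 2^{-k(\sigma-n\tau)(p\vee q)'}$, and then reduce to an unweighted Young-type convolution estimate --- is exactly the weight-modification technique this paper deploys in the proof of Theorem \ref{t3.3} (the weights $\omega_{k,i}$ there), and is the standard route in \cite{yy3,ysay}. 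The only blemishes are bookkeeping constants: with your extension of $\Omega$ constant on $[2^{-j},2^{-j+1})$ the relevant aperture for the terms $m=j+k$, $k\ge 0$, is $2^{k+1}$ rather than $2^k$, and the $k=0$ (and $k=-1$) terms land in a level set of $N_2\omega$ rather than of $N\omega$, so Lemma \ref{l3.3} with $\beta=2$ is needed there too; both cost only fixed multiplicative constants and do not affect the argument.
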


For any locally integrable function $f$ on $\rn$ and $x\in\rn$, the
\textit{Hardy-Littlewood maximal function} $Mf(x)$ of $f$ is defined
by
$Mf(x):= \sup_{Q\ni x}\frac1{|Q|}\int_Q |f(y)|\,dy,$
where the supremum is taken over all cubes in $\rn$ centered at $x$ with sides
parallel to the coordinate axes. It is well known that $M$ is
bounded from $L^p(\rn)$ to $L^p(\rn)$ when $p\in(1,\infty]$; see,
for example, \cite{st2}. Moreover, if  $p\in(1,\infty)$ and $q\in(1,
\infty]$, then there exists a positive constant $C$ such that for
all sequences $\{f_k\}_{k\in\zz}$ of locally integrable functions on
$\rn$,
\begin{equation}\label{2.3}
\lf\|\lf\{\sum_{k\in\zz}[Mf_k]^q\r\}^{1/q}\r\|_{\lp}
\le
C \lf\|\lf\{\sum_{k\in\zz}|f_k|^q\r\}^{1/q}\r\|_{\lp}.
\end{equation}
This is the well-known Fefferman-Stein vector-valued inequality; see \cite{fs}
or \cite[p.\,56, (13)]{st2}.

\section{Continuous Characterizations}\label{s3}

In this section, we use the methods from \cite{u10} to characterize
the homogeneous Besov-type space $\dbt$, the Triebel-Lizorkin-type
space $\dft$ and their preduals, the Besov-Hausdorff space $\dbhr$
and the Triebel-Lizorkin-Hausdorff space $\dfhr$ via the local means,
the Peetre maximal functions of local means and the tent space
associated with local means (Lusin functions) in both discrete and
continuous types. These characterizations are further used in Section
\ref{ax+b} to prove that the spaces $\dbt$, $\dft$, $\dbhr$ and
$\dfhr$ are coorbits of certain spaces on the $n$-dimensional
$ax+b$-group $\mathcal{G}$.

\subsection{Continuous Characterizations
of $\dbt$ and $\dft$ \label{s3.1}}

Let $\vz\in\cs(\rn)$ be such that
\begin{equation}\label{3.1}
\supp\widehat{\vz}\subset \{\xi\in\rn:\ 1/2\le|\xi|\le 2\}\
\mathrm{and}\ |\widehat{\vz}(\xi)|\ge C>0\ \mathrm{if}\
3/5\le|\xi|\le 5/3.
\end{equation}
Recall that the homogeneous Besov-type space $\dbt$ and
Triebel-Lizorkin-type space $\dft$ are defined as follows; see
\cite{yy1,yy2}.

\begin{definition}\label{d3.1}
Let $s\in\rr$, $\tau\in[0,\fz)$, $q\in(0,\,\fz]$ and
$\vz\in\cs(\rn)$ satisfy \eqref{3.1}.

{\rm (i)} The {\it Besov-type space $\dbt$} with $p\in(0,\fz]$
is defined to be the set of
all $f\in\cs'_\fz(\rn)$ such that $\|f\|_{\dbt}:= \|\{2^{js}
(\vz_j\ast f) \}_{j\in\zz}\|_{\ell^q(L^p_\tau(\rn,\zz))}<\fz$.

{\rm (ii)} The {\it Triebel-Lizorkin-type space $\dft$} with $p\in
(0,\fz)$ is defined to be the set of all $f\in\cs'_\fz(\rn)$ such
that $\|f\|_{\dft}:= \|\{2^{js} (\vz_j\ast f)
\}_{j\in\zz}\|_{L^p_\tau(\ell^q(\rn,\zz))}<\fz$.
\end{definition}

Let $\varepsilon\in(0,\infty)$, $R\in\zz_+\cup\{-1\}$ and $\Phi\in\cs(\rn)$
satisfy that
\begin{equation}\label{3.2}
|\widehat{\Phi}(\xi)|>0\hs \mathrm{on}\hs \{\xi\in\rn:\
\varepsilon/2<|\xi|<2\varepsilon\}\quad\mathrm{and}\quad D^\az(\widehat{\Phi})(0)
=0\hs \mathrm{for\ all}\hs
\|\az\|_{\ell^1}\le R.
\end{equation}
Recall that $\Phi_t\ast f$ for $t\in\rr$ are usually called the {\it
local means}; see, for example, \cite{t92}. We characterize the
space $\ft$ as follows.

\begin{theorem}\label{t3.1}
Let $s\in\rr$, $\tau\in[0,\fz)$, $p\in(0,\fz)$, $q\in(0,\fz]$,
$R\in\zz_+\cup\{-1\}$ and $a\in(n/(p\wedge q), \fz)$ such that
$s+n\tau<R+1$ and $\Phi$ be as in \eqref{3.2}. Then the space $\dft$
is characterized by
$$\dft=\{f\in\cs'_R(\rn):\ \|f|\dft\|_i<\fz\},\quad i\in\{1,\cdots,5\},$$
where
\begin{eqnarray*}
\|f|\dft\|_1\ev\sup_{P\in\mathcal{Q}}\frac1{|P|^\tau}
\lf\{\int_P
\lf[\int_0^{\ell(P)}t^{-sq}|\Phi_t*f(x)|^q\dt\r]^{p/q}\,dx\r\}^{1/p},\noz
\end{eqnarray*}
\begin{eqnarray*}
\|f|\dft\|_2\ev\sup_{P\in\mathcal{Q}}\frac1{|P|^\tau}
\lf\{\int_P \lf[\int_0^{\ell(P)}
t^{-sq}|(\Phi_t^*f)_a(x)|^q\dt\r]^{p/q}\,dx\r\}^{1/p},
\end{eqnarray*}
\begin{eqnarray*}
\|f|\dft\|_3\ev\sup_{P\in\mathcal{Q}}\frac1{|P|^\tau}
\lf\{\int_P\lf[\int_0^{\ell(P)}t^{-sq}\int_{|z|<t}
|\Phi_t*f(x+z)|^q\,dz\frac{dt}{t^{n+1}}\r]^{p/q}\,dx\r\}^{1/p},
\end{eqnarray*}
\begin{eqnarray*}
\|f|\dft\|_4\ev\sup_{P\in\mathcal{Q}}\frac1{|P|^\tau}
\lf\{\int_P
\lf[\sum_{k=j_P}^\fz2^{skq}|(\Phi_k^*f)_a(x)|^q\r]^{p/q}\,dx\r\}^{1/p}
\end{eqnarray*}
and
\begin{eqnarray*}
\|f|\dft\|_5\ev\sup_{P\in\mathcal{Q}}\frac1{|P|^\tau}
\lf\{\int_P
\lf[\sum_{k=j_P}^\fz2^{skq}|\Phi_k*f(x)|^q\r]^{p/q}\,dx\r\}^{1/p},
\end{eqnarray*}
with the usual modification made when $q=\fz$; moreover,
when $a\in(2n/(p \wedge q), \fz)$,
$$\dft=\{f\in\cs'_R(\rn):\ \|f|\dft\|_6<\fz\},$$
where
\begin{eqnarray*}
\|f|\dft\|_6\ev\sup_{P\in\mathcal{Q}}\frac1{|P|^\tau}
\lf\{\int_P \lf(\sum_{j=j_P}^\fz
2^{jsq}\lf[\int_\rn\frac{2^{jn}|\Phi_j\ast
f(x+y)|^r}{(1+2^j|y|)^{ar}}
\,dy\r]^{q/r}\r)^{p/q}\,dx\r\}^{1/p}\noz
\end{eqnarray*}
and $r\in (0, p\wedge q)$ satisfying that $ar>2n$. Furthermore,
all $\|\cdot|\dft\|_i$, $i\in\{1,\cdots,6\}$, are equivalent
(quasi-)norms in $\dft$.
\end{theorem}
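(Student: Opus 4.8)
The plan is to prove the theorem by establishing a closed cycle of (quasi-)norm inequalities among $\|f|\dft\|_1,\dots,\|f|\dft\|_6$ and the defining (quasi-)norm $\|f\|_{\dft}$ of Definition \ref{d3.1}, checking along the way that the ambient classes $\cs'_R(\rn)$ and $\cs'_\fz(\rn)$ agree on the relevant range. Several links are immediate and come from taking $y=0$ in the Peetre-type maximal function: $|\Phi_t\ast f(x)|\le(\Phi_t^*f)_a(x)$ and $|\Phi_k\ast f(x)|\le(\Phi_k^*f)_a(x)$ give $\|f|\dft\|_1\le\|f|\dft\|_2$ and $\|f|\dft\|_5\le\|f|\dft\|_4$; and for $|z|<t$ one has $|\Phi_t\ast f(x+z)|\le 2^a(\Phi_t^*f)_a(x)$, so that after integrating the cone one obtains $\|f|\dft\|_3\ls\|f|\dft\|_2$. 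The substance of the theorem is therefore the reverse bounds, i.e.\ controlling the maximal and averaged quantities by the plain local means.

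The core reverse estimate rests on the pointwise key estimate Lemma \ref{l3.2}, which dominates a fixed power of the Peetre-type maximal function of $\Phi$-local means by a superposition over scales of plain local means with rapidly decaying inter-scale weights. Concretely, I fix $r\in(0,p\wedge q)$ (so $p/r>1$ and $q/r>1$) and, since $a>n/(p\wedge q)$, I have $ar>n$; applying Lemma \ref{l3.2} at each dyadic scale dominates $(\Phi_k^*f)_a(x)^r$ by a weighted sum over $m$ of $2^{-|m-k|\dz}M(|\Phi_m\ast f|^r)(x)$ with $\dz$ as large as needed, where the recognition of the inner spatial average as the Hardy--Littlewood maximal function uses $ar>n$. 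Then the summation Lemma \ref{l2.1} absorbs the inter-scale weights (this is where $\dz>n\tau$ enters) and the Fefferman--Stein inequality \eqref{2.3}, applied with exponents $p/r$ and $q/r$, removes the maximal function. This yields $\|f|\dft\|_4\ls\|f|\dft\|_5$, closing $\|\cdot\|_4\sim\|\cdot\|_5$.

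The remaining links are more routine. The continuous and discrete scales are matched by noting that, for $t\in[2^{-k-1},2^{-k})$, the quantities $(\Phi_t^*f)_a(x)$ and $(\Phi_{2^{-k}}^*f)_a(x)$ are comparable up to a fixed factor, so that $\int_0^{\ell(P)}t^{-sq}|\cdots|^q\dt\sim\sum_{k\ge j_P}2^{ksq}|\cdots|^q$; this gives $\|\cdot\|_1\sim\|\cdot\|_5$ and $\|\cdot\|_2\sim\|\cdot\|_4$. The tent-space norm $\|\cdot\|_3$ is then folded into the cycle by combining $\|\cdot\|_3\ls\|\cdot\|_2$ with a cone-averaging variant of Lemma \ref{l3.2} followed by \eqref{2.3}. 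For $\|\cdot\|_6$, lowering $a$ slightly gives $\|\cdot\|_6\ls\|\cdot\|_4$, while the converse $\|\cdot\|_4\ls\|\cdot\|_6$ is the standard passage from the $L^r$-averaged to the pointwise Peetre maximal function, which here forces $ar>2n$ and $a>2n/(p\wedge q)$. Finally, the equivalence of all these $\Phi$-characterizations with $\|f\|_{\dft}$ follows from a Calder\'on-type reproducing identity transferring between a kernel $\vz$ satisfying \eqref{3.1} and $\Phi$ satisfying \eqref{3.2}: the Tauberian condition $|\wh{\Phi}|>0$ on an annulus permits the inversion, while the moment condition $D^\az(\wh{\Phi})(0)=0$ for $\|\az\|_{\ell^1}\le R$ together with $s+n\tau<R+1$ guarantees convergence of the low-frequency tails and identifies $\cs'_R(\rn)$ with $\cs'_\fz(\rn)$ on this range.

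The main obstacle is the $\tau$-localization built into every norm: the supremum over dyadic cubes $P$ with weight $|P|^{-\tau}$ and the truncation of the scale to $t<\ell(P)$ (equivalently $k\ge j_P$). In the classical case $\tau=0$ the arguments above are standard, but here one cannot directly localize $\Phi_t\ast f(x)$ for $x\in P$ because the kernel reaches outside $P$. I would therefore split the contributions to $\Phi_t\ast f(x)$ into a part coming from a fixed dilate of $P$ and a tail from its complement, decompose the complement into annular sub-cubes, and estimate the tail using the decay of $\Phi$ and its vanishing moments, summing the resulting geometric series against the weights $|P|^{-\tau}$; the Fefferman--Stein and summation lemmas then have to be applied to these localized pieces rather than globally. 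This bookkeeping, together with the interplay between the cone $\{|z|<t\}$ in $\|\cdot\|_3$ and the truncation at $\ell(P)$ (the cone can protrude beyond $P$), is where the genuine difficulty lies, and the thresholds $a>n/(p\wedge q)$ and $a>2n/(p\wedge q)$ are dictated precisely by keeping these localized maximal estimates summable.
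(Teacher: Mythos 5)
Your overall architecture is essentially the paper's: the nontrivial inequalities all rest on the key pointwise estimate of Lemma \ref{l3.2}, combined with the decomposition of $\rn$ into $3P$ and the annular translates $P+i\ell(P)$, the Hardy--Littlewood maximal function (using $ar>n$), Lemma \ref{l2.1} to absorb the inter-scale weights, and the Fefferman--Stein inequality \eqref{2.3} applied with exponents $p/r$ and $q/r$; the independence of $\Phi$ via a reproducing formula and the identification of the reservoir $\cs'_R(\rn)$ via Lemma \ref{l3.1} are also handled as in the paper, and you correctly locate the main technical burden in the $\tau$-localization. However, there is one genuine gap: the claim that for $t\in[2^{-k-1},2^{-k})$ the quantities $(\Phi_t^*f)_a(x)$ and $(\Phi_{2^{-k}}^*f)_a(x)$ (let alone $|\Phi_t*f(x)|$ and $|\Phi_{2^{-k}}*f(x)|$) are ``comparable up to a fixed factor'' is false. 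The Peetre maximal function controls translates of a \emph{single} convolution $\Phi_t*f$; it gives no pointwise comparison between convolutions with the two \emph{different} kernels $\Phi_t$ and $\Phi_{2^{-k}}$. For a Tauberian $\Phi$ with $\supp\widehat{\Phi}$ compact one can even arrange $\Phi_t*f\equiv 0$ while $\Phi_{t'}*f\not\equiv 0$ for nearby $t,t'$, so no fixed-factor bound holds in either direction. Consequently your links $\|\cdot|\dft\|_1\sim\|\cdot|\dft\|_5$ and $\|\cdot|\dft\|_2\sim\|\cdot|\dft\|_4$ are not established, and your cycle does not close.

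The repair is exactly the mechanism you already invoke for $\|\cdot|\dft\|_4\ls\|\cdot|\dft\|_5$: the key estimate \eqref{3.3} is uniform in the continuous parameter $t\in[1,2]$ and bounds $(\Phi_{2^{-\ell}t}^*f)_a(x)^r$ by a superposition over all finer scales of $|(\Phi_{k+\ell})_t*f(y)|^r$, so that after Minkowski's inequality the inner $\int_1^2(\cdot)\,dt/t$ reassembles into the continuous norm. This is how the paper proves $\|\cdot|\dft\|_2\ls\|\cdot|\dft\|_1$, and slight modifications of the same computation give $\|\cdot|\dft\|_2\ls\|\cdot|\dft\|_5$ and $\|\cdot|\dft\|_4\ls\|\cdot|\dft\|_1$, closing the cycle as $\|\cdot|\dft\|_1\le\|\cdot|\dft\|_2\ls\|\cdot|\dft\|_5\le\|\cdot|\dft\|_4\ls\|\cdot|\dft\|_1$. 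Every continuous--discrete bridge must therefore pass through Lemma \ref{l3.2} (the alternative device, the modified maximal function $(\widetilde{\Phi}_t^*f)_a$ of Remark \ref{r3.1}, is a separate statement, not a proof of the original one). The remaining parts of your outline --- the tent-space norm $\|\cdot|\dft\|_3$ via a cone-averaged variant of \eqref{3.3}, the treatment of $\|\cdot|\dft\|_6$ with the threshold $ar>2n$, and the $\Phi$-independence --- are consistent with the paper's Steps 2--4.
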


We call the characterizations of the space $\dft$ via the (quasi-)norm
$\|\cdot|\dft\|_i$, $i\in\{1,\cdots,5\}$, respectively, the
characterizations by the continuous local means, by the continuous
Peetre maximal function of local means, by the tent space (Lusin
area function) associated with local means, by the discrete Peetre
maximal function of local means and by the discrete local means.

Before we prove the above theorem let us give the following remark.

\begin{remark} Notice that Theorem \ref{t3.1}, in the special case when
$p\in (0,\infty)$ and
$\tau = 1/p$, gives new interesting characterizations
for the spaces $\dot{F}^{s}_{\infty,q}(\mathbb{R}^n)$ introduced by Triebel
\cite{t83} and extended by Frazier and Jawerth \cite{fj2} which can partly be
seen as continuous type Carleson measure characterizations. Discrete
counterparts of the above characterizations have been given by Rychkov
\cite{Ry99a}. Notice also that, for all $p\in(0,\fz)$, the spaces $\dot{F}^{0,1/p}_{p,2}(\rn)$
coincide with $\mathop\mathrm{BMO} ({\mathbb R}^n)$.
\end{remark}

To prove Theorem \ref{t3.1}, we need the following conclusion, which
was first observed in \cite{yy3}. For completeness, we give some
details here.

\begin{lemma}\label{l3.1}
Let $s$, $\tau$, $p$ and $q$ be as in Definition \ref{d3.1}. If
$f\in\dft$ or $\dbt$, then there exists a canonical way to find a
representative of $f$ such that $f \in \cs_L'(\rn)$, where
$L:=(-1)\vee \lfloor s+n(\tau-1/p) \rfloor$.
\end{lemma}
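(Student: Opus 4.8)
The plan is to manufacture the representative by a Calder\'on-type reproducing formula and then to locate it inside $\cs'_L(\rn)$ by a direct pairing estimate against test functions with many vanishing moments. Recall that $\cs'_L(\rn)=\cs'(\rn)/\cp_L(\rn)$, where $\cp_L(\rn)$ is the space of polynomials of degree at most $L$, so that a ``representative of $f$ in $\cs'_L(\rn)$'' means an element whose image under the canonical projection $\cs'(\rn)/\cp_L(\rn)\twoheadrightarrow\cs'(\rn)/\cp(\rn)=\cs'_\fz(\rn)$ equals $f$. First I would fix $\psi\in\cs(\rn)$ with $\supp\widehat\psi$ contained in an annulus bounded away from the origin and $\sum_{j\in\zz}\widehat\vz(2^{-j}\xi)\,\widehat\psi(2^{-j}\xi)=1$ for all $\xi\ne0$, so that $f=\sum_{j\in\zz}\psi_j\ast\vz_j\ast f$ in $\cs'_\fz(\rn)$. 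Since $\vz$ satisfies \eqref{3.1}, each $\vz_j\ast f$ is a well-defined smooth function of at most polynomial growth, and I would define the candidate $\wz f$ by
\begin{equation*}
\langle \wz f,\phi\rangle:=\sum_{j\in\zz}\int_\rn (\vz_j\ast f)(x)\,(\widetilde\psi_j\ast\phi)(x)\,dx,\qquad \phi\in\cs_L(\rn).
\end{equation*}
The lemma then reduces to three points: absolute convergence of this series for each $\phi\in\cs_L(\rn)$; continuity of $\wz f$ on $\cs_L(\rn)$, i.e. $\wz f\in\cs'_L(\rn)$; and agreement of $\wz f$ with $f$ on $\cs_\fz(\rn)$. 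The third point is immediate from the reproducing formula, since $\cs_\fz(\rn)\st\cs_L(\rn)$.

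The analytic heart is a pointwise bound for the pieces $\vz_j\ast f$ in terms of $\|f\|_{\dft}$. From the definition of $\dft$, for any dyadic cube $P$ with $\ell(P)=2^{-j}$ one has $\{\int_P|\vz_j\ast f|^p\,dx\}^{1/p}\ls|P|^\tau 2^{-js}\|f\|_{\dft}$. Since $\vz_j\ast f$ has Fourier support in $\{|\xi|\sim2^j\}$, a Nikolskii-type (band-limited) inequality upgrades this $L^p$ average to a pointwise bound; after covering $\rn$ by cubes of side $2^{-j}$ and tracking the $\tau$-normalization, this produces
\begin{equation*}
|\vz_j\ast f(x)|\ls 2^{-j[s+n(\tau-1/p)]}\,(1+|x|)^{N_0}\,\|f\|_{\dft},\qquad x\in\rn,\ j\in\zz,
\end{equation*}
for some $N_0$ depending only on $n$ and $\tau$. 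I regard establishing this weighted estimate --- in particular the interplay of band-limitedness with the $\tau$-normalization and the correct growth in $x$ --- as the main technical obstacle; once it is in hand the exponent $s+n(\tau-1/p)$ appears explicitly and governs everything.

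The two-sided decay of $\widetilde\psi_j\ast\phi$ then does the remaining work. For $j\ge0$, the support of $\widehat{\widetilde\psi_j}$ in $\{|\xi|\sim2^j\}$ together with the Schwartz decay of $\widehat\phi$ forces $\widetilde\psi_j\ast\phi$ to decay faster than any power of $2^{-j}$, which dominates the polynomial factor $2^{-j[s+n(\tau-1/p)]}$ and makes $\sum_{j\ge0}$ converge. For $j<0$ I would use that $\phi\in\cs_L(\rn)$ gives $D^\az\widehat\phi(0)=0$ for all $\|\az\|_{\ell^1}\le L$, so $\widehat\phi$ vanishes to order $L+1$ at the origin; combined with $\supp\widehat{\widetilde\psi_j}\st\{|\xi|\sim2^j\}$ this yields a weighted $L^1$ bound $\ls 2^{j(L+1)}$ for $\widetilde\psi_j\ast\phi$, the spatial weight $(1+|x|)^{N_0}$ being absorbed by the Schwartz decay. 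Pairing against the pointwise estimate above leaves the summable geometric factor
\begin{equation*}
2^{j(L+1)}\,2^{-j[s+n(\tau-1/p)]}=2^{j\{(L+1)-[s+n(\tau-1/p)]\}},\qquad j<0,
\end{equation*}
which is summable over $j<0$ exactly when $L+1>s+n(\tau-1/p)$. The minimal integer with this property that also satisfies $L\ge-1$ (so that $\cs_L(\rn)$ is defined) is precisely $L=(-1)\vee\lfloor s+n(\tau-1/p)\rfloor$, which is where the stated value of $L$ comes from.

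Finally, the Besov case $f\in\dbt$ is handled identically: only the first pointwise estimate changes, with the $L^p_\tau(\ell^q)$ average replaced by the $\ell^q(L^p_\tau)$ one, while the decay analysis in $j$ is unchanged. Continuity of $\wz f$ on $\cs_L(\rn)$ follows since the above bounds control $|\langle\wz f,\phi\rangle|$ by a fixed finite collection of Schwartz seminorms of $\phi$, whence $\wz f\in\cs'_L(\rn)$. To justify the word \emph{canonical} I would verify that $\wz f$ is independent, modulo $\cp_L(\rn)$, of the admissible pair $(\vz,\psi)$: two such reproducing formulas differ by a sum whose pairing against any $\phi\in\cs_L(\rn)$ vanishes by the same two-sided estimates, so the resulting element of $\cs'_L(\rn)$ is unique and depends only on $f$.
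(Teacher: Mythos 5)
Your construction is correct in substance but proceeds differently from the paper. The paper's proof is very short: after the same Calder\'on reproducing formula $f=\sum_{m}\widetilde\psi_m\ast\vz_m\ast f$ in $\cs'_\fz(\rn)$, it simply invokes the arguments of \cite[Lemma 4.2]{yy2} (going back to \cite[pp.\,153--155]{fj2} and \cite{bh}) to produce polynomials $P_N$ of degree at most $L$ such that $\sum_{m=-N}^{N}\widetilde\psi_m\ast\vz_m\ast f+P_N$ converges in $\cs'(\rn)$ to a genuine tempered distribution $g$ representing $f$ modulo $\cp(\rn)$; the class of $g$ in $\cs'(\rn)/\cp_L(\rn)$ is then the desired element of $\cs'_L(\rn)$. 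You instead define the functional directly on $\cs_L(\rn)$ by the pairing $\sum_j\langle\vz_j\ast f,\widetilde\psi_j\ast\phi\rangle$ and prove absolute convergence. The numerology you isolate --- the growth $2^{-j[s+n(\tau-1/p)]}$ of $\vz_j\ast f$ against the decay $2^{j(L+1)}$ coming from the $L$ vanishing moments of $\phi$ for $j<0$, and the superpolynomial decay in $2^{-j}$ for $j\ge 0$ --- is exactly the content of the delegated references, so in effect you are unfolding the argument the paper outsources. What your route buys is self-containedness and a transparent explanation of where $L=(-1)\vee\lfloor s+n(\tau-1/p)\rfloor$ comes from; what the paper's route buys is brevity and an honest element of $\cs'(\rn)$ (a limit of corrected partial sums) rather than only a functional on $\cs_L(\rn)$. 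The two outputs are equivalent under the identification $\cs'_L(\rn)\cong\cs'(\rn)/\cp_L(\rn)$.

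One point needs tightening. You state the key pointwise bound with a factor $(1+|x|)^{N_0}$ and then claim, for $j<0$, a weighted $L^1$ bound $\ls 2^{j(L+1)}$ for $\widetilde\psi_j\ast\phi$ with the weight ``absorbed by the Schwartz decay.'' For $j<0$ the dilated kernel $\widetilde\psi_j$ lives at scale $2^{-j}\gg 1$, so $\int(1+|x|)^{N_0}|\widetilde\psi_j\ast\phi(x)|\,dx\ls 2^{j(L+1)}2^{-jN_0}$; if $N_0>0$ this extra factor is large and would force the stronger requirement $L+1>s+n(\tau-1/p)+N_0$, spoiling the stated value of $L$. The fix is that $N_0=0$ actually suffices: the definition of $\|f\|_{\dft}$ gives $2^{js}\{\int_P|\vz_j\ast f|^p\,dx\}^{1/p}\le|P|^\tau\|f\|_{\dft}$ uniformly over \emph{all} dyadic cubes $P$ of side length $2^{-j}$, so the Plancherel--Polya--Nikolskii step (summing the contributions of the cubes $P+i\ell(P)$ with the weight $(1+\|i\|_{\ell^1})^{-ar}$, $ar>n$) yields $\sup_{x\in\rn}|\vz_j\ast f(x)|\ls 2^{-j[s+n(\tau-1/p)]}\|f\|_{\dft}$ with no spatial growth. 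With that observation your argument closes as written; I would also note that your final uniqueness claim (independence of the pair $(\vz,\psi)$ modulo $\cp_L(\rn)$) is not needed for the lemma and, as sketched, only shows the difference is a convergent sum rather than zero --- the paper's ``canonical'' refers merely to the explicitness of the construction.
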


\begin{proof}
We only consider the space $\dft$ by similarity. Let $f\in\dft$ and
$\vz\in\cs(\rn)$ satisfy \eqref{3.1}, Then by \cite[Lemma
(6.9)]{fjw}, there exists a function $\psi\in\cs(\rn)$ satisfying
\eqref{3.1} such that
$\sum_{j\in\zz}\overline{\widehat{\vz}(2^j\xi)}\widehat{\psi}(2^j\xi)=1$
for all $\xi\in\rn\setminus\{0\}$. By the Calder\'on reproducing
formula in \cite[Lemma 2.1]{yy1}, we know that $f=\sum_{m\in\zz}
\widetilde{\psi}_m\ast\vz_m\ast f$ in $\cs'_\fz(\rn),$ where and in
what follows, $\widetilde{\psi}(z):=\overline{\psi(-z)}$ for all
$z\in\rn$. From the arguments in \cite[Lemma 4.2]{yy2} (see also
\cite[pp.\,153-155]{fj2} and \cite[Proposition 3.8 and Corollary
3.9]{bh}), we deduce that there exists a sequence
$\{P_N\}_{N\in\nn}$ of polynomials, with degree no more than
$L$ same as in Lemma \ref{l3.1} for all $N\in\nn$,
such that $g:= \lim_{N\to\fz}(\sum_{m=-N}^N
\widetilde{\psi}_m\ast\vz_m\ast f+P_N)$ in $\cs'(\rn)$ and
$g$ is a representative of the equivalence class
$f+\mathcal{P}(\rn)$. We identify $f$ with its representative $g$.
In this sense, $f\in\cs'_L(\rn)$, which completes the proof of
Lemma \ref{l3.1}.
\end{proof}

By \cite[(2.66)]{u10} and the argument in the proof of
\cite[Theorem 2.8]{u10}, we have the following estimate, which is widely used in
this paper.

\begin{lemma}\label{l3.2}
Let $R\in\zz_+\cup\{-1\}$, $\Phi\in\cs(\rn)$ satisfy \eqref{3.2}
and $f\in \cs'_R(\rn)$. For all $t\in[1,2]$, $a\le N$,
$\ell\in\zz$ and $x\in\rn$,
\begin{equation}\label{3.3}
(\Phi_{2^{-\ell}t}^*f)_a(x)^r\le C(r)\sum_{k=0}^\fz2^{-kNr}2^{(k+\ell)n}
\int_\rn\frac{|(\Phi_{k+\ell})_t*f(y)|^r}{(1+2^\ell|x-y|)^{ar}}\,dy,
\end{equation}
where $r$ is an arbitrary fixed positive number and $C(r)$ is a
positive constant independent of $\Phi$, $f$, $\ell$, $x$ and $t$,
but may depend on $r$.
\end{lemma}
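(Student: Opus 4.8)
The plan is to deduce \eqref{3.3} from a Calder\'on-type reproducing identity adapted to the Tauberian condition \eqref{3.2}, combined with the self-improving argument of Str\"omberg--Torchinsky and Rychkov that underlies \cite{u10}. Write $\sigma:=2^{-\ell}t$ with $t\in[1,2]$, so that $\sigma\sim 2^{-\ell}$ and, by the dilation conventions, $(\Phi_{k+\ell})_t=\Phi_{2^{-(k+\ell)}t}=\Phi_{2^{-k}\sigma}$; thus the right-hand side of \eqref{3.3} is a weighted sum, with geometric factor $2^{-kNr}$, over the \emph{finer} scales $2^{-k}\sigma$, $k\ge0$. Since \eqref{3.2} forces $D^\az\widehat{\Phi}(0)=0$ for $\|\az\|_{\ell^1}\le R$, equivalently $\Phi\in\cs_R(\rn)$, and since $f\in\cs_R'(\rn)$, each $\Phi_\sigma\ast f$ is a well-defined smooth function of at most polynomial growth (changing the representative of $f$ by a polynomial of degree $\le R$ leaves $\Phi_\sigma\ast f$ unchanged, as $\Phi$ annihilates such polynomials). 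Replacing the supremum in the definition of $(\Phi_\sigma^*f)_a$ by a supremum over $|y|\le A$, a truncation to be removed at the end, therefore renders $(\Phi_\sigma^*f)_a(x)$ a priori finite, which is exactly what will license the absorption step below.

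First I would set up the reproducing identity. Using that $\widehat{\Phi}$ is bounded away from zero on the annulus $\{\varepsilon/2<|\xi|<2\varepsilon\}$ together with its flatness at the origin, one constructs, as in Rychkov's key lemma (see \cite{R99,R01,st89} and \cite{u10}), an auxiliary $\Psi\in\cs(\rn)$ with vanishing moments of arbitrarily high order $M$ such that $\Phi_\sigma\ast f=\sum_{k=0}^\infty\kappa_k\ast\Phi_{2^{-k}\sigma}\ast f$ in $\cs'(\rn)$, where the kernels $\kappa_k$ are built from the dilates of $\Phi$ and $\Psi$. The one-sidedness of the sum is characteristic of the homogeneous Tauberian reproducing formula: for $f\in\cs_R'(\rn)$, the flatness of $\widehat{\Phi}$ at the origin suppresses the coarse-scale ($k<0$) contributions. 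The essential output is the kernel estimate: the interplay between the $R$-fold flatness of $\widehat{\Phi}$ at $0$ and the $M$ vanishing moments of $\Psi$ yields, for every large $L$, a bound $|\kappa_k(w)|\le C(L,M)\,2^{-kN}\,2^{\ell n}(1+2^\ell|w|)^{-L}$, in which $N$ can be made as large as desired by enlarging $M$. This freedom in $N$ is what accommodates the hypothesis $a\le N$.

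Next I would pass to a pointwise bound. Inserting the reproducing identity into $\Phi_\sigma\ast f(x+y)$, dividing by $(1+|y|/\sigma)^a\sim(1+2^\ell|y|)^a$, and estimating with the kernel bound, I would transfer the Peetre weight from $y$ to the integration variable via the elementary inequality $1+2^\ell|x-z|\le(1+2^\ell|x+y-z|)(1+2^\ell|y|)$ together with $(1+2^\ell|x+y-z|)^{-L}\le(1+2^\ell|x+y-z|)^{-a}$ for $L\ge a$, the surplus decay of $\kappa_k$ keeping the resulting $z$-integral convergent. Taking the supremum over $y$ then gives the auxiliary ($r=1$-type) inequality bounding $(\Phi_\sigma^*f)_a(x)$ by $\sum_{k\ge0}2^{-kN}2^{\ell n}\int_\rn|\Phi_{2^{-k}\sigma}\ast f(z)|(1+2^\ell|x-z|)^{-a}\,dz$.

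The main obstacle is the final, genuinely nonlinear step: upgrading this $L^1$-average to the $L^r$-average in \eqref{3.3}, which for $r<1$ cannot be obtained from H\"older's inequality. Here I would run the self-improvement: factor $|\Phi_{2^{-k}\sigma}\ast f(z)|=|\Phi_{2^{-k}\sigma}\ast f(z)|^r|\Phi_{2^{-k}\sigma}\ast f(z)|^{1-r}$ and bound the second factor, via the very definition of the Peetre maximal function at scale $2^{-k}\sigma$, by $[(\Phi_{2^{-k}\sigma}^*f)_a(x)]^{1-r}(1+2^{k+\ell}|x-z|)^{a(1-r)}$. The growth $(1+2^{k+\ell}|x-z|)^{a(1-r)}\le2^{ka(1-r)}(1+2^\ell|x-z|)^{a(1-r)}$ is paid for by the decay $2^{-kN}$ with $N$ large relative to $a$ and $n/r$, and the weight is rearranged so that the remaining integral becomes exactly $2^{(k+\ell)n}\int_\rn|\Phi_{2^{-k}\sigma}\ast f(z)|^r(1+2^\ell|x-z|)^{-ar}\,dz$, matching the target. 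The delicate point is that the leftover factors $[(\Phi_{2^{-k}\sigma}^*f)_a(x)]^{1-r}$ live at the finer scales and cannot be dominated by the coarse-scale quantity; I would therefore work with the weighted supremum $\sup_{k\ge0}2^{-k\epsilon}(\Phi_{2^{-k}\sigma}^*f)_a(x)$, finite thanks to the truncation, establish a self-improving inequality for it, absorb the sublinear power $(\cdot)^{1-r}$ into the left-hand side, and finally let $\epsilon\to0$ and $A\to\infty$. This absorption, and the verification that all exponents close for the stated range $a\le N$, is where the real work lies; the remaining manipulations are routine bookkeeping.
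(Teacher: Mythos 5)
Your proposal is correct and follows essentially the same route as the paper, which does not prove Lemma \ref{l3.2} itself but simply cites \cite[(2.66)]{u10} and the argument in the proof of \cite[Theorem 2.8]{u10} --- that is, exactly the Rychkov/Str\"omberg--Torchinsky scheme you outline: a one-sided Tauberian reproducing identity with rapidly decaying kernels, the resulting linear Peetre-maximal bound with the weight transferred via the submultiplicativity of $(1+2^\ell|\cdot|)^a$, and the $|g|=|g|^r|g|^{1-r}$ self-improvement with truncation, a weighted supremum over scales, and absorption. The only quibble is your explanation of the one-sidedness of the reproducing formula: the coarse scales are absent not because the flatness of $\widehat{\Phi}$ at the origin suppresses them, but because one reproduces the fixed Schwartz function $\Phi_\sigma$ itself, whose low-frequency part (below the Tauberian annulus) is carried trivially by the $k=0$ term and whose rapidly decaying high-frequency tails are what supply the $2^{-kN}$ factors for $k\ge1$.
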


Now we turn to the proof of  Theorem \ref{t3.1}.

\begin{proof}[Proof of  Theorem \ref{t3.1}]
We prove Theorem \ref{t3.1} in four steps.
First we show that for the same $\Phi$,
$\|f|\dft\|_i$, $i\in\{1,2,4,5\}$, are equivalent each other.
Next we prove that $\|f|\dft\|_i$, $i\in\{1,2,4,5\}$, are independent
of the choice of $\Phi$ and the conclusions of Theorem \ref{t3.1}
for $i\in\{1,2,4,5\}$ hold. Finally, for the same $\Phi$, we show that
$\|f|\dft\|_2\sim\|f|\dft\|_3$ and $\|f|\dft\|_5\sim\|f|\dft\|_6$.

\emph{Step 1.} In this step, for the same $\Phi$,
we prove the relations that
\begin{equation}\label{3.4}
\|f|\dft\|_1\sim\|f|\dft\|_2\sim\|f|\dft\|_4\sim\|f|\dft\|_5
\end{equation}
for every $f\in\cs'_R(\rn)$.

Notice that obviously, by the definitions, we see
that $\|f|\dft\|_1\le\|f|\dft\|_2$
and $\|f|\dft\|_5\le\|f|\dft\|_4$. Next we show that
$\|f|\dft\|_2\ls\|f|\dft\|_1$.

We choose $r\in({n}/{a},p\wedge q).$
Then by \eqref{3.3}, we have
\begin{eqnarray*}
\|f|\dft\|_2&&\le\sup_{P\in\mathcal{Q}}\frac1{|P|^\tau}
\lf\{\int_P\lf[\sum_{\ell=j_P}^\fz\int_{2^{-\ell}}^{2^{-\ell+1}}
t^{-sq}|(\Phi_t^*f)_a(x)|^q\dt\r]^{p/q}\,dx\r\}^{1/p}\noz\\
&&\ls\sup_{P\in\mathcal{Q}}\frac1{|P|^\tau}
\lf\{\int_P\lf(\sum_{\ell=j_P}^\fz\int_{1}^{2}
2^{\ell sq}\lf[\sum_{k=0}^\fz2^{-kNr}2^{(k+\ell)n}\r.\r.\r.\noz\\
&&\hs\times\lf.\lf.\lf.\int_\rn\frac{|(\Phi_{k+\ell})_t*f(y)|^r}
{(1+2^\ell|x-y|)^{ar}}\,dy\r]^{q/r}\dt\r)^{p/q}\,dx\r\}^{1/p},\noz
\end{eqnarray*}
where the natural number $N\in[a,\fz)$ is determined later.
From Minkowski's inequality, it follows that
\begin{eqnarray*}
\|f|\dft\|_2
&&\ls\sup_{P\in\mathcal{Q}}\frac1{|P|^\tau}
\lf\{\int_P\lf(\sum_{\ell=j_P}^\fz
2^{\ell sq}\left[\sum_{k=0}^\fz2^{-kNr}2^{(k+\ell)n}\r.\r.\r.\noz\\
&&\hs\times\lf.\lf.\lf.\int_\rn\frac{
\left[\int_1^2|(\Phi_{k+\ell})_t*f(y)|^q\,\frac{dt}t\r]^{r/q}}
{(1+2^\ell|x-y|)^{ar}}\,dy\r]^{q/r}\r)^{p/q}\,dx\r\}^{1/p}.\noz
\end{eqnarray*}

Fix any $P\in\cq$. Notice that $\ell\ge j_P$ and
\begin{equation}\label{3.5}
1+2^\ell|x-y|\gs 2^\ell 2^{-j_P}\|i\|_{\ell^1}
\end{equation}
for all $x\in P$ and $y\in P+i\ell(P)$
with $i\in\zz_+^n$ and $\|i\|_{\ell^1}\ge2$.
From these facts, we then infer that, for
all $x\in P$,
\begin{eqnarray}\label{3.6}
&&\int_\rn\frac{\left[\int_1^2|(\Phi_{k+\ell})_t*f(y)|^q\,\frac{dt}t\r]^{r/q}}
{(1+2^\ell|x-y|)^{ar}}\,dy\noz\\
&&\hs\ls\int_{3P}\frac{\left[\int_1^2|(\Phi_{k+\ell})_t*f(y)|^q\,\frac{dt}t\r]^{r/q}}
{(1+2^\ell|x-y|)^{ar}}\,dy+\sum_{{i\in\zz_+^n},\,{\|i\|_{\ell^1}\ge2}}
\int_{P+i\ell(P)}\cdots\noz\\
&&\hs\ls2^{-\ell n}
M\left(\left[\int_1^2|(\Phi_{k+\ell})_t*f|^q\,\frac{dt}t\r]^{r/q}\chi_{3P}\r)(x)
+\sum_{{i\in\zz_+^n},\,{\|i\|_{\ell^1}\ge2}}\|i\|_{\ell^1}^{-ar}2^{-\ell
ar}2^{j_Par}\noz\\
&&\hs\hs\times\int_{P+i\ell(P)}{\left[\int_1^2|(\Phi_{k+\ell})_t*f(y)|^q
\,\frac{dt}t\r]^{r/q}}
\chi_{P+i\ell(P)}(y)\,dy\noz\\
&&\hs\ls2^{-\ell n}M\left(\left[\int_1^2|(\Phi_{k+\ell})_t*f|^q
\,\frac{dt}t\r]^{r/q}\chi_{3P}\r)(x)
+\sum_{{i\in\zz_+^n},\,{\|i\|_{\ell^1}\ge2}}\|i\|_{\ell^1}^{-ar}2^{-\ell
ar}\noz\\
&&\hs\hs\times2^{j_P(ar-n)}
M\left(\left[\int_1^2|(\Phi_{k+\ell})_t*f|^q\,
\frac{dt}t\r]^{r/q}\chi_{P+i\ell(P)}\r)(x+i\ell(P))
=:{\mathrm I}_1+{\mathrm I}_2.
\end{eqnarray}

For the term ${\mathrm I}_1$, letting $\delta>0$ and $N>\delta\vee (\delta+n/r-s)$,
from H\"older's inequality and the Fefferman-Stein vector-valued
inequality \eqref{2.3}, we deduce that
\begin{eqnarray}\label{3.7}
&&\sup_{P\in\mathcal{Q}}\frac1{|P|^\tau}
\lf\{\int_P\lf[\sum_{\ell=j_P}^\fz2^{\ell
sq}\lf(\sum_{k=0}^\fz2^{-kNr}2^{(k+\ell)n}{\mathrm I}_1\r)^{q/r}
\r]^{p/q}\,dx\r\}^{1/p}\noz\\
&&\hs\ls\sup_{P\in\mathcal{Q}}\frac1{|P|^\tau}
\lf\{\int_P\lf[\sum_{\ell=j_P}^\fz
\sum_{k=0}^\fz2^{-k(N-\dz)q+knq/r}2^{-k sq}\r.\r.\noz\\
&&\hs\hs\times\lf.\lf.
\lf[M\lf(\left[\int_{2^{-k-\ell}}^{2^{-k-\ell+1}}
t^{-sq}|\Phi_t*f|^q\,
\frac{dt}t\r]^{r/q}\chi_{3P}\r)(x)
\r]^{q/r}\r]^{p/q}\,dx\r\}^{1/p}\noz\\
&&\hs\ls\sup_{P\in\mathcal{Q}}\frac1{|P|^\tau}
\lf\{\int_{3P}\lf[\sum_{\ell=j_P}^\fz
\sum_{k=0}^\fz2^{-k(N-\dz)q+knq/r}2^{-k sq}\r.\r.\noz\\
&&\hs\hs\times\lf.\lf.
\int_{2^{-k-\ell}}^{2^{-k-\ell+1}}
t^{-sq}|\Phi_t*f(x)|^q\,
\frac{dt}t
\r]^{p/q}\,dx\r\}^{1/p}\ls\|f|\dft\|_1.
\end{eqnarray}
Similar to the estimate \eqref{3.7}, for the term ${\mathrm I}_2$,
by using $ar>n$, we also conclude that
\begin{eqnarray}\label{3.8}
\sup_{P\in\mathcal{Q}}\frac1{|P|^\tau}
\lf\{\int_P\lf[\sum_{\ell=j_P}^\fz2^{\ell
sq}\lf(\sum_{k=0}^\fz2^{-kNr}2^{(k+\ell)n} {\mathrm I}_2\r)^{q/r}
\r]^{p/q}\,dx\r\}^{1/p}\ls\|f|\dft\|_1.
\end{eqnarray}
Combining the estimate \eqref{3.7} and \eqref{3.8},
we see that
$\|f|\dft\|_2\ls\|f|\dft\|_1.$

With slight modifications of the above argument, we also obtain
$$\|f|\dft\|_2\ls\|f|\dft\|_5\quad\mathrm{and}\quad
\|f|\dft\|_4\ls\|f|\dft\|_1,$$
which yields \eqref{3.4} and completes the proof of Step 1.

\emph{Step 2}. In this step, we first show that $\|f|\dft\|_i$,
$i\in\{1,2,4,5\}$, are independent of the choice of $\Phi$.
To this end, we temporarily write $\|f|\dft\|_4$ in Theorem \ref{t3.1}
by $\|f|\dft\|^\Phi_4$.
Let $\Psi$ also satisfy \eqref{3.2} and we use $\|f|\dft\|^\Psi_4$
to denote $\|f|\dft\|_4$ as in Theorem \ref{t3.1} via replacing
$\Phi$ therein by $\Psi$. By Step 1 and the symmetry, we see that
it suffices to prove that
$\|f|\dft\|^\Psi_4\ls \|f|\dft\|^\Phi_4.$

Indeed, by \cite[(2.89)]{u10}, we know that for all $\ell\in\zz$,
\begin{eqnarray*}
2^{\ell s}(\Psi^\ast_\ell f)_a\ls
\sum_{k=0}^\ell 2^{(k-\ell)(R+1-s)}2^{ks}(\Phi^\ast_k f)_a
+\sum_{k=\ell+1}^\infty2^{(\ell-k)(K+1-a+s)}2^{ks}(\Phi^\ast_k f)_a,
\end{eqnarray*}
where $K\in \nn$ is sufficiently large. Choosing
$K>a+|s|+n\tau$, by Lemma \ref{l2.1}, we immediately conclude that
$\|f|\dft\|^\Psi_4\ls \|f|\dft\|^\Phi_4,$ which, combined with
\eqref{3.4}, further implies that $\|f|\dft\|_i$, $i\in\{1,2,4,5\}$,
are independent of the choice of $\Phi$.

Next we show that the conclusions of Theorem \ref{t3.1} for
$i\in\{1,2,4,5\}$ hold. Notice that if $\vz$ satisfies
\eqref{3.1}, then it also satisfies
\eqref{3.2} for $\varepsilon=1$.
From this observation and the definition of $\|f\|_\dft$, together
with the independence of $\|f|\dft\|_5$ of $\Phi$, it further follows
that $\|f|\dft\|_5\sim\|f|\dft\|_5^\vz\sim\|f\|_\dft$.
To complete the proof of Step 2, since $\cs'_R(\rn)\subset \cs'_\infty(\rn)$,
we see that it suffices to show that if $f\in\dft$, then $f\in\cs'_R(\rn)$. Indeed,
by Lemma \ref{l3.1}, we know that $f\in\cs'_L(\rn)$ with
$L:=(-1)\vee\lfloor s+n(\tau-1/p) \rfloor$, which together
with the fact $L\le R$, implies that $f\in\cs'_R(\rn)$. Thus,
$$\dft=\{f\in\cs'_R(\rn):\ \|f|\dft\|_i<\fz\},\quad i\in\{1,2,4,5\},$$
which completes the proof of Step 2.

\emph{Step 3}. In this step, for a fixed $\Phi$,
we prove that $\|f|\dft\|_2\sim\|f|\dft\|_3$ for all
$f\in\cs'_R(\rn)$. Since the inequality
$\|f|\dft\|_3\ls\|f|\dft\|_2$ is trivial, we only need to show
$\|f|\dft\|_2\ls\|f|\dft\|_3$.

Notice that for all $k\ge0$ and $\ell\in\zz$,
when $t\in[1,2]$ and $|z|<2^{-(k+\ell)}t$,
$$1+2^\ell|x-y|\le1+2^\ell(|x-(y+z)|+|z|)\ls 1+2^\ell|x-(y+z)|.$$
By this and \eqref{3.3}, we conclude that,
for all $t\in[1,2]$, $a\le N$, $\ell\in\zz$ and $x\in\rn$,
\begin{eqnarray*}
\lf[(\Phi_{2^{-\ell}t}^*f)_a(x)\r]^r
&&\ls\sum_{k=0}^\fz2^{-kNr+2(k+\ell)n}\int_\rn
\int_{|z|<2^{-(k+\ell)} t}
\frac{|(\Phi_{k+\ell})_t*f(y)|^r}{(1+2^\ell|x-y|)^{ar}}\,dz\,dy\\
&&\ls \sum_{k=0}^\fz2^{-kNr+2(k+\ell)n}\int_\rn\int_{|z|<2^{-(k+\ell)}t}
\frac{|(\Phi_{k+\ell})_t*f(y+z)|^r}{(1+2^\ell|x-y|)^{ar}}\,dz\,
dy.
\end{eqnarray*}
Thus, from Minkowski's inequality, we further deduce that
\begin{eqnarray}\label{3.9}
&&\int_1^2 \lf[(\Phi_{2^{-\ell}t}^*f)_a(x)\r]^q \frac{dt}t\noz\\
&&\hs\ls  \lf\{\sum_{k=0}^\fz2^{-kNr+2(k+\ell)n}\int_\rn
\frac{\left[\int_1^2\int_{|z|<2^{-(k+\ell)}t}
|(\Phi_{k+\ell})_t*f(y+z)|^q\,dz\,\frac{dt}t\r]^{r/q}}{(1+2^\ell|x-y|)^{ar}}\,
dy\r\}^{q/r},\qquad
\end{eqnarray}
which, together with
\begin{eqnarray}\label{3.10}
\|f|\dft\|_2
\ls\sup_{P\in\cq}\frac1{|P|^\tau}\lf\{\int_P\lf[
\sum_{\ell=j_P}^\fz2^{\ell sq}\int_{1}^{2}
[(\Phi_{2^{-\ell}t}^*f)_a(x)]^q\dt\r]^{\frac pq}dx\r\}^{\frac 1p}
\end{eqnarray}
and H\"older's inequality, implies that
\begin{eqnarray*}
&&\|f|\dft\|_2\\
&&\hs\ls\sup_{P\in\cq}\frac1{|P|^\tau}\lf\{\int_P\lf[
\sum_{\ell=j_P}^\fz2^{\ell sq+2\ell nq/r}
\sum_{k=0}^\fz2^{-k(N-\dz)q+2knq/r}\r.\r.\noz\\
&&\hs\hs\times\lf.\lf.\lf\{\int_\rn
\frac{\left[\int_1^2\int_{|z|<2^{-(k+\ell)}t}
|(\Phi_{k+\ell})_t*f(y+z)|^q\,dz\,\frac{dt}t\r]^{r/q}}{(1+2^\ell|x-y|)^{ar}}\,
dy\r\}^{q/r}\r]^{p/q}dx\r\}^{1/p}\noz,
\end{eqnarray*}
where $\delta\in(0,N)$.

By \eqref{3.5} and $ar>n$, we conclude that, for all $x\in P$,
\begin{eqnarray*}
&&\int_\rn \frac{\left[\int_1^2\int_{|z|<2^{-(k+\ell)}t}
|(\Phi_{k+\ell})_t*f(y+z)|^q\,dz\,\frac{dt}t\r]^{r/q}}
{(1+2^\ell|x-y|)^{ar}}\,dy\noz\\
&&\hs\le\int_{3P} \frac{\left[\int_1^2\int_{|z|<2^{-(k+\ell)}t}
|(\Phi_{k+\ell})_t*f(y+z)|^q\,dz\,\frac{dt}t\r]^{r/q}}
{(1+2^\ell|x-y|)^{ar}}\,dy
+\sum_{\|i\|_{\ell^1}\ge2,i\in\zz^n}\int_{P+i\ell(P)}\cdots\noz\\
&&\hs\ls 2^{-\ell n}M\lf( \left[\int_1^2\int_{|z|<2^{-(k+\ell)}t}
|(\Phi_{k+\ell})_t*f(\cdot+z)|^q\,dz\,\frac{dt}t\r]^{r/q}\chi_{3P}
\r)(x)\noz\\
&&\hs\hs+\sum_{\|i\|_{\ell^1}\ge2,\,i\in\zz^n_+}\|i\|_{\ell^1}^{-ar}
2^{-(\ell-j_P)(ar-n)}2^{-\ell n}\\
&&\hs\hs\times M\lf(\left[\int_1^2\int_{|z|<2^{-(k+\ell)}t}
|(\Phi_{k+\ell})_t*f(\cdot+z)|^q\,dz\,\frac{dt}t\r]^{r/q}
\chi_{P+i\ell(P)}\r)(x+i\ell(P))\noz.
\end{eqnarray*}
Then applying the Fefferman-Stein vector-valued inequality,
by an argument similar to the estimate of \eqref{3.7}, we further see that
$\|f|\dft\|_2\ls\|f|\dft\|_3$, which completes
the proof of Step 3.

\emph{Step 4}. Finally, in this step, we prove that
$\|f|\dft\|_5\sim\|f|\dft\|_6$.
To this end, by \cite[(2.48)]{u10}, we know that for all
$\ell\in\zz$ and $x\in\rn$,
\begin{eqnarray*}
|\Phi_\ell*f(x)|^r\ls\sum_{k=0}^\fz2^{-kNr}2^{(k+\ell)n}
\int_\rn\frac{|\Phi_{k+\ell}*f(x+y)|^r}{(1+2^\ell|y|)^{Nr}}\,dy,
\end{eqnarray*}
where $N\in\nn$ is sufficiently large and determined later.
Letting $N>a-s$, we have
\begin{eqnarray*}
2^{\ell sr}|\Phi_\ell*f(x)|^r\ls
 \sum_{i=\ell}^\fz2^{-(i-\ell)(N-a+s)r}
\int_\rn\frac{2^{in}|\Phi_i*f(x+y)|^r}{(1+2^i|y|)^{ar}}\,dy,
\end{eqnarray*}
which, combined with Lemma \ref{l2.1}, further implies that
$\|f|\dft\|_5\ls\|f|\dft\|_6$.

Conversely, from $a/2>n/(p\wedge q)$ and $ar/2>n$,
we infer that, for all $x\in\rn$,
\begin{eqnarray*}
\int_\rn\frac{2^{jn}|\Phi_j*f(x+y)|^r}{(1+2^j|y|)^{ar}}\,dy
\ls\sup_{z\in\rn} \frac{|\Phi_j*f(x+z)|^r}{(1+2^j|z|)^{ar/2}}
\int_\rn\frac{2^{jn}}{(1+2^j|y|)^{ar/2}}\,dy
\sim(\Phi_j^*f)_{a/2}^r(x).
\end{eqnarray*}
By this, Steps 2 and 3, we know that
$\|f|\dft\|_6\ls\|f|\dft\|_5$ immediately, which completes the proof
of Theorem \ref{t3.1}.
\end{proof}

The Besov-type spaces also have the characterizations as in Theorem \ref{t3.1},
whose proofs are similar to that of Theorem \ref{t3.1}.
We omit the details.

\begin{theorem}\label{t3.2}
Let $s\in\rr$, $\tau\in[0,\fz)$, $p,\,q\in(0,\fz]$,
$R\in\zz_+\cup\{-1\}$ and $a\in(n/p, \fz)$ such that $s+n\tau<R+1$
and $\Phi$ be as in \eqref{3.2}. Then the space $\dbt$ is
characterized by
$$\dbt=\{f\in\cs'_R(\rn):\ \|f|\dbt\|_i<\fz\},\quad i\in\{1,\cdots,4\},$$
where
\begin{eqnarray*}
\|f|\dbt\|_1 \ev\sup_{P\in\mathcal{Q}}\frac1{|P|^\tau}
\lf\{\int_0^{\ell(P)}t^{-sq}
\lf[\int_P|\Phi_t*f(x)|^p\,dx\r]^{q/p}\dt\r\}^{1/q},\noz
\end{eqnarray*}
\begin{eqnarray*}
\|f|\dbt\|_2
\ev\sup_{P\in\mathcal{Q}}\frac1{|P|^\tau}
\lf\{\int_0^{\ell(P)}t^{-sq}
\lf[\int_P|(\Phi_t^*f)_a(x)|^p\,dx\r]^{q/p}\dt\r\}^{1/q},\noz
\end{eqnarray*}
\begin{eqnarray*}
\|f|\dbt\|_3\ev\sup_{P\in\mathcal{Q}}\frac1{|P|^\tau}
\lf\{\sum_{k=j_P}^\fz2^{skq}
\lf[\int_P|(\Phi_k^*f)_a(x)|^p\,dx\r]^{q/p}\r\}^{1/q}
\end{eqnarray*}
and
\begin{eqnarray*}
\|f|\dbt\|_4\ev\sup_{P\in\mathcal{Q}}\frac1{|P|^\tau}
\lf\{\sum_{k=j_P}^\fz2^{skq}
\lf[\int_P|\Phi_k*f(x)|^p\,dx\r]^{q/p}\r\}^{1/q},
\end{eqnarray*}
with the usual modifications made when $p=\fz$ or $q=\fz$.
Moreover, when $a\in(2n/p, \fz)$,
$$\dbt=\{f\in\cs'_R(\rn):\ \|f|\dbt\|_5<\fz\},$$
where
\begin{eqnarray*}
\|f|\dbt\|_5\ev\sup_{P\in\mathcal{Q}}\frac1{|P|^\tau}
\lf\{\sum_{j=j_P}^\fz 2^{jsq}\lf(\int_P
\lf[\int_\rn\frac{2^{jn}|\Phi_j\ast
f(x+y)|^r}{(1+2^j|y|)^{ar}}
\,dy\r]^{p/r}\,dx\r)^{q/p}\r\}^{1/q}\noz
\end{eqnarray*}
and $r\in (0, p)$ satisfying that $ar>2n$.
Furthermore, all
quantities $\|f|\dbt\|_i$, $i\in\{1,\cdots,5\}$, are equivalent
(quasi-)norms in $\dbt$.
\end{theorem}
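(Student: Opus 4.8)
The plan is to mirror the proof of Theorem \ref{t3.1} essentially verbatim, replacing the mixed-norm space $L^p_\tau(\ell^q(\rn,\zz))$ by $\ell^q(L^p_\tau(\rn,\zz))$ throughout. The structural skeleton is identical: in Step 1, for a fixed $\Phi$ I would establish the chain $\|f|\dbt\|_1\sim\|f|\dbt\|_2\sim\|f|\dbt\|_3\sim\|f|\dbt\|_4$ for all $f\in\cs'_R(\rn)$; in Step 2, I would prove independence of the choice of $\Phi$ (via the pointwise comparison \cite[(2.89)]{u10} together with Lemma \ref{l2.1}) and then invoke Lemma \ref{l3.1} and the inclusion $L\le R$ to identify the space with $\{f\in\cs'_R(\rn):\|f|\dbt\|_i<\fz\}$; and finally, an analogue of Step 4 would yield $\|f|\dbt\|_4\sim\|f|\dbt\|_5$ under the stronger hypothesis $a\in(2n/p,\fz)$. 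Note that the Besov scale has no continuous tent-space (Lusin area) characterization in the list, so there is no counterpart of Step 3; this shortens the argument.

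The crucial adjustment is that the Besov norms take the $\ell^q$ integration \emph{outside} the $L^p$ integration over the cube $P$. First I would apply Lemma \ref{l3.2} exactly as in \eqref{3.3} to bound $(\Phi^\ast_{2^{-\ell}t}f)_a$, and then sum over $\ell$ in the outer $\ell^q$ position. The key difference is where the Fefferman--Stein inequality \eqref{2.3} and the Hardy--Littlewood maximal operator enter: here, after localizing the convolution integral over $\rn$ into the pieces $3P$ and $P+i\ell(P)$ as in \eqref{3.6}, I would apply the \emph{scalar} $L^p$-boundedness of $M$ (valid since $r<p$, so $M$ acts on $L^{p/r}$) separately for each index $\ell$, rather than the vector-valued inequality. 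This is the reason Theorem \ref{t3.2} requires only $a\in(n/p,\fz)$ in place of $a\in(n/(p\wedge q),\fz)$: the summability in $\ell$ and in the auxiliary index $k$ is handled by the geometric weights $2^{-k(N-\dz)r}$ and $2^{-\ell(ar-n)}$ together with Lemma \ref{l2.1}(i), which absorbs the resulting $2^{-|m-j|\dz}$-type convolution in the $\ell^q(L^p_\tau)$ norm, not by interchanging the order of a vector-valued maximal estimate.

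I would prove $\|f|\dbt\|_2\ls\|f|\dbt\|_1$ as the representative hard case. After \eqref{3.3}, Minkowski's inequality moves the $t$-integral inside, and the spatial integral splits as in \eqref{3.6} into a main term $\mathrm I_1$ controlled by $M([\int_1^2|(\Phi_{k+\ell})_t\ast f|^q\,dt/t]^{r/q}\chi_{3P})$ and a tail $\mathrm I_2$ summed against $\|i\|_{\ell^1}^{-ar}$. Choosing $r\in(n/a,p)$ guarantees $ar>n$ so the tail converges, and choosing $N>\dz\vee(\dz+n/r-s)$ with $\dz>n\tau$ makes the $k$-sum geometric; applying the scalar $L^{p/r}$-boundedness of $M$ over $3P$ (enlarging the supremum domain from $P$ to $3P$ costs only a dimensional constant, since $|3P|^\tau\sim|P|^\tau$) and then Lemma \ref{l2.1}(i) to resum the geometric weights recovers $\|f|\dbt\|_1$. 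The analogues $\|f|\dbt\|_2\ls\|f|\dbt\|_4$ and $\|f|\dbt\|_3\ls\|f|\dbt\|_1$ follow by the same template with only notational changes.

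The main obstacle, such as it is, lies in Step 4 — the passage to $\|f|\dbt\|_5$. Starting from \cite[(2.48)]{u10}, I would bound $2^{\ell sr}|\Phi_\ell\ast f(x)|^r$ by $\sum_{i\ge\ell}2^{-(i-\ell)(N-a+s)r}\int_\rn 2^{in}|\Phi_i\ast f(x+y)|^r(1+2^i|y|)^{-ar}\,dy$ and then invoke Lemma \ref{l2.1}(i) to obtain $\|f|\dbt\|_4\ls\|f|\dbt\|_5$; for the reverse, the condition $ar>2n$ lets me split the exponent as $ar=ar/2+ar/2$, bound the integral by $(\Phi^\ast_jf)^r_{a/2}(x)\int_\rn 2^{jn}(1+2^j|y|)^{-ar/2}\,dy$, and reduce to $\|f|\dbt\|_3$ via Step 2. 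The only genuine care needed is tracking that the constraint $a/2>n/p$ (equivalently $ar/2>n$ with $r<p$ giving $a>2n/p$) is exactly the stated hypothesis $a\in(2n/p,\fz)$, so that the reduction to the already-established $\|f|\dbt\|_3$ is legitimate. Everything else is a faithful transcription of the Triebel--Lizorkin argument with the roles of the two integrations swapped, so I would simply indicate these changes and omit the routine computations, as the authors do.
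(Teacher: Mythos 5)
Your proposal is correct and takes essentially the same approach the paper intends: the authors omit the proof of Theorem \ref{t3.2} precisely because it is the proof of Theorem \ref{t3.1} with the $\ell^q$ and $L^p$ integrations swapped, and your adjustments are the right ones (scalar $L^{p/r}$-boundedness of $M$ with $r\in(n/a,p)$ in place of the vector-valued inequality \eqref{2.3}, which is exactly what relaxes $a>n/(p\wedge q)$ to $a>n/p$; Lemma \ref{l2.1}(i) for the resummation; no tent-space step; and the same two-sided argument for $\|\cdot|\dbt\|_4\sim\|\cdot|\dbt\|_5$ under $a>2n/p$).
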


With slight modifications for the proofs of Theorems \ref{t3.1} and
\ref{t3.2}, we obtain the following observation. We omit the details.

\begin{remark}\label{r3.1}\rm
The conclusions of Theorems \ref{t3.1} and \ref{t3.2} are still true if, for all
$f\in\cs'_R(\rn)$ and $x\in\rn$, we
replace $(\Phi_t^*f)_a(x)$ by
$$(\widetilde{\Phi}_t^*f)_a(x):=\sup_{\gfz{y\in\rn}{t/2\le s\le
t}}\frac{|\Phi_s\ast f(x+y)|}{(1+|y|/s)^a}.$$
\end{remark}

The most prominent example $\Phi$ of functions satisfying
\eqref{3.2} is the classical local means; see \cite[Section
3.3]{t92} for examples. In particular, we have the following statements.

\begin{corollary}\label{c3.1}
Let $k\in \cs(\rn)$ such that $\widehat{k}(0)\neq 0$ and
$\Psi:= \Delta^Nk$ with $N\in\nn$ and $2N>s+n\tau$. Then,

{\rm (i)} For $p,\,q,\,s$ and $\tau$ as in Theorem \ref{t3.1}
and $\Phi$ therein replaced by $\Psi$, the conclusions of
Theorem \ref{t3.1} are true.

{\rm (ii)} For $p,\,q,\,s$ and $\tau$ as in Theorem \ref{t3.2}
and $\Phi$ therein replaced by $\Psi$, the conclusions of
Theorem \ref{t3.2} are true.
\end{corollary}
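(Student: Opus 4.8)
The plan is to reduce everything to verifying that the function $\Psi:=\Delta^Nk$ fulfils the Tauberian-type conditions \eqref{3.2} for a suitable choice of the parameters $\varepsilon$ and $R$; once this is done, statements (i) and (ii) follow immediately by invoking Theorems \ref{t3.1} and \ref{t3.2} with $\Phi$ replaced by $\Psi$. Since $k\in\cs(\rn)$ and $\Delta^N$ is a differential operator of order $2N$, the first observation is simply that $\Psi\in\cs(\rn)$, so that $\Psi$ is an admissible kernel in the sense required by the theorems.

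First I would compute the Fourier transform. With the convention $\widehat{\vz}(\xi)=\int_\rn e^{-i\xi x}\vz(x)\,dx$ used in the paper, one has $\widehat{\Delta k}(\xi)=-|\xi|^2\widehat{k}(\xi)$, whence
\begin{equation*}
\widehat{\Psi}(\xi)=(-1)^N|\xi|^{2N}\widehat{k}(\xi),\qquad\xi\in\rn.
\end{equation*}
Next I would check the vanishing-moment part of \eqref{3.2}. The factor $|\xi|^{2N}=(\xi_1^2+\cdots+\xi_n^2)^N$ is a homogeneous polynomial of degree $2N$, so all its derivatives $D^\bz$ with $\|\bz\|_{\ell^1}\le 2N-1$ vanish at the origin; since $\widehat{k}$ is smooth, the Leibniz rule then gives $D^\az(\widehat{\Psi})(0)=0$ for all $\|\az\|_{\ell^1}\le 2N-1$. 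I would therefore set $R:=2N-1$. Because $N\in\nn$ we have $R\in\zz_+$, and the standing hypothesis $2N>s+n\tau$ is exactly $s+n\tau<R+1$, which is precisely the constraint on $R$ imposed in both Theorems \ref{t3.1} and \ref{t3.2}.

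Finally I would verify the non-degeneracy part of \eqref{3.2}. Since $\widehat{k}(0)\neq0$ and $\widehat{k}$ is continuous, there exists $\rho\in(0,\fz)$ with $\widehat{k}(\xi)\neq0$ whenever $|\xi|<\rho$; consequently $\widehat{\Psi}(\xi)=(-1)^N|\xi|^{2N}\widehat{k}(\xi)\neq0$ for all $\xi$ with $0<|\xi|<\rho$. Choosing any $\varepsilon\in(0,\rho/2)$ then forces the annulus $\{\xi\in\rn:\ \varepsilon/2<|\xi|<2\varepsilon\}$ to lie inside the punctured ball $\{\xi\in\rn:\ 0<|\xi|<\rho\}$, so that $|\widehat{\Psi}(\xi)|>0$ on that annulus. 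Thus $\Psi$ satisfies \eqref{3.2} with this $\varepsilon$ and with $R=2N-1$.

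I do not expect any genuine obstacle here, as the argument is a direct verification. The only point requiring care is the bookkeeping matching the order of vanishing $2N-1$ of $\widehat{\Psi}$ at the origin with the admissible range of $R$: one must confirm that the hypothesis $2N>s+n\tau$ indeed leaves room to take $R=2N-1$ while retaining $s+n\tau<R+1$, and (more as a consistency check internal to the cited theorems than a step of this proof) that the exponent $L=(-1)\vee\lfloor s+n(\tau-1/p)\rfloor$ produced by Lemma \ref{l3.1} satisfies $L\le R$, which holds since $s+n(\tau-1/p)\le s+n\tau<2N$. With \eqref{3.2} verified, the conclusions of Theorems \ref{t3.1} and \ref{t3.2} apply verbatim to $\Psi=\Delta^Nk$, completing the proof of both (i) and (ii).
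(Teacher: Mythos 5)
Your proposal is correct and follows exactly the route the paper intends (the paper omits the verification, simply pointing to classical local means as examples of kernels satisfying \eqref{3.2}): you compute $\widehat{\Psi}(\xi)=(-1)^N|\xi|^{2N}\widehat{k}(\xi)$, take $R=2N-1$ so that $s+n\tau<R+1$ is precisely the hypothesis $2N>s+n\tau$, and use continuity of $\widehat{k}$ at the origin to find an admissible $\varepsilon$ for the Tauberian condition. The bookkeeping, including the check that $L\le R$, is accurate, so the conclusions of Theorems \ref{t3.1} and \ref{t3.2} apply verbatim.
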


\begin{corollary}\label{c3.2}
Let $\varphi_0\in\cs(\rn)$ be a non-increasing radial function satisfying
$\vz_0(0)\neq0$ and $D^{\az}\vz_0(0)=0$
for $1\le\|\az\|_{\ell^1}\le R$ with $R+1>s+n\tau$. Define
$\vz(\cdot)\ev\vz_0(\cdot)-\vz_0(2\cdot)$ and $\Psi:=\mathcal{F}^{-1}\vz$.

{\rm (i)} For $p,\,q,\,s$ and $\tau$ as in Theorem \ref{t3.1}
and $\Phi$ therein replaced by $\Psi$, the conclusions of
Theorem \ref{t3.1} are true.

{\rm (ii)} For $p,\,q,\,s$ and $\tau$ as in Theorem \ref{t3.2}
and $\Phi$ therein replaced by $\Psi$, the conclusions of
Theorem \ref{t3.2} are true.
\end{corollary}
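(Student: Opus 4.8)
The plan is to recognize that Corollary \ref{c3.2} is not really a new theorem but a verification: it suffices to check that the kernel $\Psi$ is admissible in the sense of \eqref{3.2} with the given value of $R$, after which Theorems \ref{t3.1} and \ref{t3.2} apply verbatim with $\Phi$ replaced by $\Psi$. First I would record that, since $\vz_0\in\cs(\rn)$, the function $\vz:=\vz_0(\cdot)-\vz_0(2\cdot)$ again belongs to $\cs(\rn)$, whence $\Psi=\mathcal{F}^{-1}\vz\in\cs(\rn)$ and $\wh{\Psi}=\vz$. Thus everything reduces to proving the two requirements in \eqref{3.2} for $\wh{\Psi}=\vz$, namely the vanishing of the derivatives $D^\az\vz(0)$ up to order $R$ and the (Tauberian) positivity of $\vz$ on an annulus $\{\xi\in\rn:\ \varepsilon/2<|\xi|<2\varepsilon\}$ for some $\varepsilon\in(0,\fz)$.

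For the moment condition I would argue by a direct computation. For any multi-index $\az$ one has $D^\az\vz(\xi)=D^\az\vz_0(\xi)-2^{\|\az\|_{\ell^1}}(D^\az\vz_0)(2\xi)$, so that $D^\az\vz(0)=(1-2^{\|\az\|_{\ell^1}})D^\az\vz_0(0)$. For $1\le\|\az\|_{\ell^1}\le R$ this vanishes because $D^\az\vz_0(0)=0$ by hypothesis, while for $\az=0$ it equals $\vz_0(0)-\vz_0(0)=0$. Hence $D^\az\wh{\Psi}(0)=0$ for all $\|\az\|_{\ell^1}\le R$, which is precisely the second half of \eqref{3.2}.

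For the Tauberian condition I would use the shape hypotheses on $\vz_0$. Writing $\vz_0(\xi)=g(|\xi|)$ with $g$ non-increasing, one has $\vz(\xi)=g(|\xi|)-g(2|\xi|)\ge0$ everywhere; since $g$ is continuous with $g(0)=\vz_0(0)\neq0$ and $g(r)\to0$ as $r\to\fz$, the function $g$ is not constant, so $g(r_0)>g(2r_0)$ for some $r_0>0$, and by continuity $g(r)>g(2r)$ on a neighbourhood of $r_0$. I would then fix $\varepsilon$ so that $(\varepsilon/2,2\varepsilon)$ lies in such a neighbourhood, giving $|\wh{\Psi}(\xi)|=\vz(\xi)>0$ on $\{\varepsilon/2<|\xi|<2\varepsilon\}$. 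I expect this to be the only delicate point, because \eqref{3.2} demands positivity on a full annulus of radius-ratio $4$ rather than merely near one radius; for the radially strictly decreasing profiles that one uses in practice this holds on all of $(0,\fz)$, and it is exactly the monotonicity of $g$ together with the decay $g(r)\to0$ that secures such an annulus in general.

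Finally, having verified both parts of \eqref{3.2} for $\Psi$ with this $R$ and $\varepsilon$, and noting that the hypothesis $R+1>s+n\tau$ is precisely the restriction $s+n\tau<R+1$ imposed in Theorems \ref{t3.1} and \ref{t3.2}, I would invoke those theorems directly with $\Phi=\Psi$, the parameters $p,\,q,\,s,\,\tau$ being as required there, to obtain (i) and (ii) respectively. No further estimates are needed, since the admissibility of $\Psi$ is the entire content of the corollary.
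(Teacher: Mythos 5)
Your reduction of the corollary to checking that $\wh{\Psi}=\vz$ satisfies the two requirements in \eqref{3.2} (and then citing Theorems \ref{t3.1} and \ref{t3.2}) is exactly the intended route --- the paper gives no separate argument for Corollary \ref{c3.2} --- and your computation $D^\az\wh{\Psi}(0)=(1-2^{\|\az\|_{\ell^1}})D^\az\vz_0(0)=0$ for all $\|\az\|_{\ell^1}\le R$ correctly disposes of the moment condition. The gap is in your verification of the Tauberian condition, and it is a real one. Continuity at a single radius $r_0$ with $g(r_0)>g(2r_0)$ only yields $g(r)>g(2r)$ on some interval $(r_0-\dz,r_0+\dz)$, whereas the annulus $\{\varepsilon/2<|\xi|<2\varepsilon\}$ has fixed endpoint ratio $4$: it fits inside $(r_0-\dz,r_0+\dz)$ only when $2(r_0-\dz)<(r_0+\dz)/2$, i.e. $\dz>3r_0/5$, which continuity does not give you. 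Moreover, your fallback assertion that monotonicity of $g$ together with $g(r)\to 0$ always produces such an annulus is false for merely non-increasing profiles. Take $g$ smooth and non-increasing with $g\equiv 1$ on $[0,4]$, $g\equiv e^{-4^k}$ on each plateau $[(1.1)4^k,4^{k+1}]$, and smooth strict drops on the short shells $[4^k,(1.1)4^k]$; then $\vz_0(\xi):=g(|\xi|)$ is a non-increasing radial Schwartz function with $\vz_0(0)\neq 0$, but $g(r)=g(2r)$ for every $r\in(0,2]$ and every $r\in[(1.1)4^k,2\cdot 4^k]$, and since the complementary gaps $(2\cdot 4^k,(1.1)4^{k+1})$ have endpoint ratio $2.2<4$, \emph{every} annulus $\{\varepsilon/2<|\xi|<2\varepsilon\}$ meets the zero set of $\vz=\vz_0-\vz_0(2\cdot)$. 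So for this $\vz_0$ the condition \eqref{3.2} holds for no $\varepsilon$, and the step you flagged as delicate cannot be closed under the hypotheses as literally stated.

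What does close it is a strict reading of the monotonicity hypothesis, which all the classical examples satisfy: if the radial profile $g$ is strictly decreasing on the set where it is positive, then for every $r$ with $g(r)>0$ one has $g(r)>g(2r)$ (either $2r$ still lies in the region of strict decrease, or $g(2r)=0<g(r)$), so $\vz>0$ on all of $\{0<|\xi|<\rho\}$ with $\rho:=\sup\{r:\ g(r)>0\}$, and any $\varepsilon$ with $2\varepsilon\le\rho$ verifies \eqref{3.2}; the rest of your argument then goes through verbatim. You should either state and use this strengthened hypothesis explicitly, or replace the continuity step by an argument that actually produces a ratio-$4$ annulus of strict inequality; as written, the proof of the Tauberian condition does not work, and no argument can work under ``non-increasing'' alone.
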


\subsection{Continuous Characterizations of $\dbh$ and
$\dfh$\label{s3.2}}

In this subsection, we focus on the continuous local
mean characterizations of homogeneous Besov-Hausdorff
spaces $\dbh$ and Triebel-Lizorkin-Hausdorff spaces
$\dfh$.

We begin with the notions of Besov-Hausdorff and
Triebel-Lizorkin-Hausdorff spaces introduced in \cite{ysy}.

\begin{definition}\label{d3.2}
Let $s\in\rr$, $p\in(1,\fz)$ and $\vz\in\cs(\rn)$ satisfy
\eqref{3.1}.

{\rm (i)} The {\it Besov-Hausdorff space $\dbh$} with $q\in[1,\fz)$ and
$\tau\in[0, 1/{(p\vee q)'}]$ is defined to be the set of all
$f\in\cs_\fz'(\rn)$ such that
$$\|f\|_{\dbh}:= \|\{2^{js} (\vz_j\ast
f)\}_{j\in\zz}\|_{\widetilde{\ell^q(L^p_\tau(\rn,\zz))}}<\fz.$$

{\rm (ii)} The {\it Triebel-Lizorkin-Hausdorff space $\dfh$} with $q\!\in\!
(1,\fz)$ and $\tau\!\in\![0, 1/{(p\vee q)'}]$
is defined to be the
set of all $f\in\cs_\fz'(\rn)$ such that
$$\|f\|_{\dfh}:= \|\{2^{js} (\vz_j\ast f)
\}_{j\in\zz}\|_{\widetilde{L^p_\tau(\ell^q(\rn,\zz))}}<\fz.$$
\end{definition}

Recall that $B\dot{H}_{p,q}^{s,0}(\rn)=
\dot{B}_{p,q}^{s}(\rn)$, $F \dot{H}_{p,q}^{s,0}(\rn)=
\dot{F}_{p,q}^{s}(\rn)$ and $F{\dot
H}_{2,2}^{-\alpha,1/2-\alpha}(\rn)= HH^1_{-\alpha}(\rn)$ (see
\cite{yy1}), where the Hardy-Hausdorff space $HH^1_{-\alpha}(\rn)$
was recently introduced by Dafni and Xiao in \cite{dx} and proved to be the
predual space of $Q_\alpha(\rn)$ therein. More applications
of $Q_\alpha(\rn)$ and $HH^1_{-\alpha}(\rn)$ were given by Dafni and Xiao 
in \cite{dx05}. We also point out that Adams and Xiao \cite{Ax04}
and Dafni and Xiao \cite{dx} first used the Hausdorff capacity and weights
to introduce some new spaces of functions and, very recently, 
Adams and Xiao \cite{Ax11, Ax12-1, Ax12-2} gave some new 
interesting applications of such spaces of functions.

\begin{remark}\label{r3.2}
{\rm (i)} The Besov-Hausdorff space $\dbh$ and
Triebel-Lizorkin-Hausdorff space $\dfh$ are
quasi-Banach spaces; see \cite{yy1,yy2,ysy}.
Indeed, by \cite[Remarks 7.1 and 7.3]{ysy}, we know that,
for all $f_1,\,f_2\in \dfh$,
$$\|f_1+f_2\|_{\dfh}\le 2^{(p\vee q)'}\lf[\|f_1\|_{\dfh}+\|f_2\|_{\dfh}\r].$$
Recall that $(p\vee q)'$ denotes the conjugate index of $p\vee q$.
An inequality similar to this is also true for $\dbh$.

{\rm (ii)} By the Aoki-Rolewicz theorem (see \cite{ao,Ro57}), there exists
$v\in(0,1]$ such that
\begin{equation}\label{3.11}
\left\|\sum_{j\in\zz} f_j\right\|_{\dbh}^v
\lesssim
\sum_{j\in\zz} \left\|f_j\right\|_{\dbh}^v\quad \mathrm{for\ all}\
\{f_j\}_{j\in\zz}\subset \dbh,
\end{equation}
and
\begin{equation}\label{3.12}
\left\|\sum_{j\in\zz} f_j\right\|_{\dfh}^v
\lesssim
\sum_{j\in\zz} \left\|f_j\right\|_{\dfh}^v \quad \mathrm{for\ all}\
\{f_j\}_{j\in\zz}\subset \dfh.
\end{equation}
Indeed, $v:=1/({1+(p\vee q)'})$ does the job.
\end{remark}

We now characterize $\dfh$ and $\dbh$ via the local means as follows.

\begin{theorem}\label{t3.3}
Let $s\in\rr$, $p,\,q\in(1,\fz)$,
$\tau\in[0, 1/{(p\vee q)'}]$, $R\in\zz_+\cup\{-1\}$ and
$a\in(n(1/(p\wedge q)+\tau), \fz)$ such that $s+n\tau<R+1$
and $\Phi$ be as in \eqref{3.2}.
Then the space $\dfh$ is
characterized by
$$\dfh=\{f\in\cs'_R(\rn):\ \|f|\dfh\|_i<\fz\},\quad i\in\{1,\cdots,5\},$$
where
\begin{eqnarray*}
\|f|\dfh\|_1\ev\inf_\oz\lf\|\lf\{\int_0^\fz t^{-sq}|\Phi_t*f(\cdot)|^q[\oz
(\cdot,t)]^{-q}\dt\r\}^{1/q}\r\|_\lp,\noz
\end{eqnarray*}
\begin{eqnarray*}
\|f|\dfh\|_2\ev\inf_\oz\lf\|\lf\{\int_0^\fz t^{-sq}[(\Phi_t^*f)_a(\cdot)]^q[\oz
(\cdot,t)]^{-q}\dt\r\}^{1/q}\r\|_\lp,\noz
\end{eqnarray*}
\begin{eqnarray*}
\|f|\dfh\|_3\ev
\inf_\oz\lf\|\lf\{\int_0^\fz t^{-sq}\int_{|z|<t}|\Phi_t*f(\cdot+z)|^q[\oz
(\cdot+z,t)]^{-q}\,dz\frac{dt}{t^{n+1}}\r\}^{1/q}\r\|_\lp,
\end{eqnarray*}
\begin{eqnarray*}
\|f|\dfh\|_4\ev \|\{2^{ks}\Phi_k*f\}_{k\in\zz}\|_{\wz{L^p_\tau
(\ell^q(\rn,\zz))}}
\end{eqnarray*}
and
\begin{eqnarray*}
\|f|\dfh\|_5\ev
\|\{2^{ks}(\Phi_k^*f)_a\}_{k\in\zz}\|_{\wz{L^p_\tau (\ell^q(\rn,\zz))}},
\end{eqnarray*}
where the infimums are taken over all
nonnegative Borel measurable functions $\omega$ on $\R_+^{n+1}$
satisfying \eqref{2.2}; moreover, when $a\in(2n(1/(p\wedge q)+\tau), \fz)$,
$$\dfh=\{f\in\cs'_R(\rn):\ \|f|\dft\|_6<\fz\},$$
where
\begin{eqnarray*}
\|f|\dfh\|_6\ev\inf_\oz\lf\|\lf\{\sum_{j\in\zz} 2^{jsq}\lf[\int_\rn\frac{2^{jn}|\Phi_j\ast
f(\cdot+y)|^r}{(1+2^j|y|)^{ar}}
\,dy\r]^{\frac qr}[\oz
(\cdot,t)]^{-q}\r\}^{\frac 1q}\r\|_\lp,\noz
\end{eqnarray*}
where $\omega$ runs over all nonnegative Borel measurable
functions on $\R_+^{n+1}$ satisfying \eqref{2.2} and
$r\in (0, p\wedge q)$ satisfying that $(a-n\tau)r>2n$.
Furthermore, all
$\|\cdot|\dfh\|_i$, $i\in\{1,\cdots,6\}$, are equivalent
(quasi-)norm in $\dfh$.
\end{theorem}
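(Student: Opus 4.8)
The plan is to mirror the four-step structure of the proof of Theorem \ref{t3.1}, replacing the spaces $\lqp$ and $\lpq$ throughout by their Hausdorff-type counterparts $\widetilde{L^p_\tau(\ell^q(\rn,\zz))}$, and Lemma \ref{l2.1}(ii) by Lemma \ref{l2.2}(ii). First I would establish, for a fixed $\Phi$, the equivalences $\|f|\dfh\|_i\sim\|f|\dfh\|_j$ for $i,j\in\{1,2,4,5\}$. The trivial directions $\|f|\dfh\|_1\le\|f|\dfh\|_2$ and $\|f|\dfh\|_4\le\|f|\dfh\|_5$ follow at once from the pointwise inequality $|\Phi_t\ast f(x)|\le(\Phi_t^*f)_a(x)$ together with the restriction on the admissible weights $\omega$. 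For the reverse estimate $\|f|\dfh\|_2\ls\|f|\dfh\|_1$, I would again invoke the key estimate \eqref{3.3} of Lemma \ref{l3.2}, decompose $\rn$ into the sub-cubes $3P$ and $P+i\ell(P)$ exactly as in \eqref{3.6}, and control each piece by the Hardy-Littlewood maximal operator using \eqref{3.5} and $ar>n$; here the role played by the Fefferman-Stein inequality \eqref{2.3} in the $\dft$ argument is instead played by Lemma \ref{l2.2}(ii), whose hypotheses $\tau\in[0,1/(p\vee q)']$ and $\delta\in(n\tau,\fz)$ dictate the admissible range of $a$ and explain the appearance of $n(1/(p\wedge q)+\tau)$ in the statement.

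Next I would show that $\|f|\dfh\|_i$, $i\in\{1,2,4,5\}$, are independent of the choice of $\Phi$ and coincide with $\|f\|_{\dfh}$. As in Step 2 for $\dft$, this rests on the pointwise domination of $(\Psi_\ell^*f)_a$ by a geometric average of the $(\Phi_k^*f)_a$ from \cite[(2.89)]{u10}, after which Lemma \ref{l2.2}(ii) (with $K>a+|s|+n\tau$) furnishes $\|f|\dfh\|_4^\Psi\ls\|f|\dfh\|_4^\Phi$. Since $\vz$ satisfying \eqref{3.1} also satisfies \eqref{3.2} with $\varepsilon=1$, this identifies $\|f|\dfh\|_4^\vz$ with $\|f\|_{\dfh}$. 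The identification of the underlying space with $\{f\in\cs'_R(\rn):\ \|f|\dfh\|_i<\fz\}$ then follows from Lemma \ref{l3.1} and $L=(-1)\vee\lfloor s+n(\tau-1/p)\rfloor\le R$, exactly as before.

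For Step 3, $\|f|\dfh\|_2\sim\|f|\dfh\|_3$, the nontrivial direction $\|f|\dfh\|_2\ls\|f|\dfh\|_3$ would again run through \eqref{3.9}: the inner shift $y\mapsto y+z$ with $|z|<2^{-(k+\ell)}t$ is absorbed into the Peetre kernel via $1+2^\ell|x-y|\ls 1+2^\ell|x-(y+z)|$, and the resulting integral is estimated by the maximal function and Lemma \ref{l2.2}(ii). One delicate point, absent in the $\dft$ case, is the weight: the Lusin-type quantity $\|f|\dfh\|_3$ carries $[\oz(\cdot+z,t)]^{-q}$ with the shifted argument, so I must check that the same admissible $\omega$ governs both sides; this is where the geometric properties of the Hausdorff capacities (Lemma \ref{l3.3}, via the nontangential maximal function $N\omega$ in \eqref{2.2}) are invoked to transfer the weight under the spatial shift. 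Step 4, $\|f|\dfh\|_5\sim\|f|\dfh\|_6$, proceeds verbatim as for $\dft$: \cite[(2.48)]{u10} with $N>a-s$ gives $\|f|\dfh\|_5\ls\|f|\dfh\|_6$ after Lemma \ref{l2.2}(ii), while the reverse uses $a/2>n(1/(p\wedge q)+\tau)$ and $(a-n\tau)r/2>n$ to bound the $L^r$-average by $(\Phi_j^*f)_{a/2}$. I expect the genuine obstacle to be Step 3 and the weight-transfer in the maximal-function estimates: whereas the $\dft$ proof freely applies the Fefferman-Stein inequality, here every such application must be routed through Lemma \ref{l2.2}(ii), and one must verify throughout that the chosen $\omega$ remains admissible under both the dyadic sub-cube decomposition and the spatial translation by $z$, which is precisely the point where the capacitary geometry (rather than the Lebesgue-measure structure) becomes essential and forces the quantitative restrictions on $a$, $r$ and $\tau$.
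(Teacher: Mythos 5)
Your overall architecture (reduce to the strategy of Theorem \ref{t3.1}, use Lemma \ref{l3.2}, Lemma \ref{l3.3} and Lemma \ref{l2.2}) points in the right direction, but the central estimates as you describe them cannot be carried out. The decomposition of $\rn$ into $3P$ and $P+i\ell(P)$ ``exactly as in \eqref{3.6}'' is unavailable here: that decomposition is anchored to the dyadic cube $P$ over which the $\dft$ quasi-norm takes a supremum, whereas the quantities $\|f|\dfh\|_i$ involve a single global integral over $\rn$ weighted by $[\oz(\cdot,t)]^{-q}$ and contain no cube $P$ at all. The paper instead splits the convolution with the Peetre kernel into annuli $|\cdot-y|\sim 2^{i-\ell}$ and sums over $i\in\zz_+$. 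Moreover, your claim that the Fefferman--Stein inequality \eqref{2.3} is ``instead played by Lemma \ref{l2.2}(ii)'' is not what happens: the paper still applies \eqref{2.3}, and Lemma \ref{l2.2} enters only in the analogue of Step 2 (independence of $\Phi$). The genuinely new ingredient, which your proposal touches on only for the $\|\cdot\|_3$ comparison but which is needed already for $\|f|\dfh\|_2\ls\|f|\dfh\|_1$, is the construction of modified weights such as
$$\oz_{k,i}(x,s)\ev 2^{-(i+k)n\tau}\sup\lf\{\wz\oz(y,t):\ |x-y|<2^{i+1}t,\ 2^{-k-1}\le s/t\le 2^{k+1}\r\},$$
whose admissibility for \eqref{2.2} is checked with Lemma \ref{l3.3}; the loss $2^{(i+k)n\tau}$ this incurs is exactly what forces $a>n(1/(p\wedge q)+\tau)$ rather than $a>n/(p\wedge q)$.

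There is a second structural gap: because the near-optimal weight $\oz_{k,i}$ depends on the summation indices $k$ and $i$, one cannot perform the sums over $k$ and $i$ inside a single infimum over $\oz$. The paper resolves this by invoking the Aoki--Rolewicz $v$-triangle inequality \eqref{3.12} to pull $\sum_k\sum_i$ outside the quasi-norm as $\sum_k\sum_i\|\cdots\|^v$, choosing a separate admissible weight for each $(k,i)$, and only then applying H\"older and \eqref{2.3}. Your proposal never mentions this step, and without it the interchange of the infimum over $\oz$ with the sums coming from \eqref{3.3} and the annular decomposition is unjustified. Finally, a minor point: the identification of the underlying distribution space uses Lemma \ref{l3.4} (with $L=(-1)\vee\lfloor s-n(\tau+1/p)\rfloor$), not Lemma \ref{l3.1}.
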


To prove this theorem, we need the following two technical lemmas.
The first one is \cite[Lemma 3.2]{ysay}, which reflects the
geometrical properties of Hausdorff capacities.

\begin{lemma}\label{l3.3}
Let $\beta\in[1,\infty)$, $\lambda\in (0,\infty)$
and $\omega$ be a nonnegative Borel measurable function
on $\rr^{n+1}_+$. Then there exists a positive
constant $C$, independent of $\beta$, $\omega$ and $\lambda$, such that
$H^d(\{x\in\rn:\ N_\beta\omega(x)>\lambda\})
\le C \beta^d H^d(\{x\in\rn:\ N\omega(x)>\lambda\}),$
where $N_\beta\omega(x):=\sup_{|y-x|<\beta t}\omega(y, t)$
for all $x\in\rn$.
\end{lemma}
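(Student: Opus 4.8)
The plan is to turn this capacity inequality into an explicit covering construction whose cost is charged against a fixed, almost optimal covering of the level set of $N\omega$. Write $\lambda$ and $\beta$ for the given parameters and set $E:=\{x\in\rn:\ N\omega(x)>\lambda\}$ and $E_\beta:=\{x\in\rn:\ N_\beta\omega(x)>\lambda\}$; I may assume $H^d(E)<\infty$, else there is nothing to prove. The one geometric observation driving everything is that, by the very definition of $N\omega$, whenever $\omega(y,t)>\lambda$ one has $B(y,t)\subset E$, since every $w\in B(y,t)$ sees the pair $(y,t)$ in its defining cone. Hence for each $x\in E_\beta$ I can fix a pair $(y_x,t_x)$ with $\omega(y_x,t_x)>\lambda$ and $|x-y_x|<\beta t_x$, so that $B(y_x,t_x)\subset E$ while $x\in B(y_x,\beta t_x)$. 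A first consequence is a bound on the scales: a covering of a ball $B(y,t)$ by balls forces $\sum_j r_j^d\gtrsim t^d$ (compare Lebesgue measures), so $t_x^d\lesssim H^d(B(y_x,t_x))\le H^d(E)<\infty$ by monotonicity, and the radii $t_x$ are uniformly bounded.

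Next I fix an almost optimal covering $E\subset\bigcup_j B(z_j,\rho_j)$ with $\sum_j\rho_j^d\le H^d(E)+\varepsilon$, and split $E_\beta$ according to the relative size of the covering balls that $B(y_x,t_x)$ meets. Let $G$ consist of those $x\in E_\beta$ for which some $B(z_j,\rho_j)$ meets $B(y_x,t_x)$ with $\rho_j\ge t_x$, and put $B:=E_\beta\setminus G$. On $G$ the estimate is immediate: the triangle inequality gives $x\in B(z_j,3\beta\rho_j)$ for the relevant $j$, so $G\subset\bigcup_j B(z_j,3\beta\rho_j)$, whence $H^d(G)\le 3^d\beta^d\sum_j\rho_j^d\le 3^d\beta^d(H^d(E)+\varepsilon)$, using $\beta\ge1$ together with monotonicity and the definition of $H^d$.

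The set $B$ is the heart of the matter, and here I first note why a naive approach fails: applying the Vitali lemma directly to the whole family $\{B(y_x,t_x)\}_{x\in E_\beta}$ would yield disjoint balls inside $E$ whose radii one wants to sum in the $d$-th power, but for disjoint balls this is a \emph{packing} quantity, which is not in general controlled by the Hausdorff content $H^d(E)$. The case split is designed precisely to circumvent this. For $x\in B$, every covering ball meeting $B(y_x,t_x)$ has $\rho_j<t_x$, yet these still cover $B(y_x,t_x)\subset E$, so the Lebesgue comparison gives $\sum_{j:\,B(z_j,\rho_j)\cap B(y_x,t_x)\ne\emptyset}\rho_j^d\gtrsim t_x^d$. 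Since the radii are bounded, I apply the Vitali $5r$-covering lemma to $\{B(y_x,t_x)\}_{x\in B}$ to extract a pairwise disjoint subfamily $\{B(y_m,t_m)\}_m$ such that every $B(y_x,t_x)$ meets some selected $B(y_m,t_m)$ with $t_x\le 2t_m$; the triangle inequality then yields $B\subset\bigcup_m B(y_m,5\beta t_m)$, so $H^d(B)\le 5^d\beta^d\sum_m t_m^d$.

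It remains to prove the charging estimate $\sum_m t_m^d\lesssim\sum_j\rho_j^d$. Summing the lower bound over the selected indices gives $\sum_m t_m^d\lesssim\sum_j\rho_j^d\cdot\#\{m:\ B(z_j,\rho_j)\cap B(y_m,t_m)\ne\emptyset\}$, so everything reduces to a uniform multiplicity bound. Because the selected balls are pairwise disjoint and, by the defining property of $B$, each one meeting a fixed $B(z_j,\rho_j)$ has radius $t_m>\rho_j$, I claim at most $C_n$ of them can meet $B(z_j,\rho_j)$. This is the step I expect to be the main obstacle to render cleanly: it is a solid-angle/volume count, proved by inscribing in each such $B(y_m,t_m)$ a ball of radius $\sim\rho_j$ whose center lies in a fixed dilate of $B(z_j,\rho_j)$; disjointness of the $B(y_m,t_m)$ forces these inscribed balls to be disjoint, and a volume estimate bounds their number by a dimensional constant. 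Granting this, $\sum_m t_m^d\lesssim\sum_j\rho_j^d\le H^d(E)+\varepsilon$, hence $H^d(B)\lesssim\beta^d(H^d(E)+\varepsilon)$. Combining the two cases by subadditivity of $H^d$ gives $H^d(E_\beta)\le H^d(G)+H^d(B)\lesssim\beta^d(H^d(E)+\varepsilon)$, and letting $\varepsilon\to0$ yields the asserted inequality.
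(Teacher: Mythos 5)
The paper itself does not prove this lemma: it is quoted verbatim from \cite[Lemma 3.2]{ysay}, so there is no in-paper argument to compare against; your proof therefore has to stand on its own, and it does. I checked the two delicate points. First, you are right that applying the $5r$-covering lemma directly to the whole family $\{B(y_x,t_x)\}_{x\in E_\beta}$ and summing $t_m^d$ over a disjoint subfamily would only produce a packing-type quantity, which is \emph{not} dominated by $H^d(E)$ when $d<n$ (disjoint balls of radius $\varepsilon$ packed into a unit cube already defeat such a bound), so the split of $E_\beta$ relative to an almost optimal covering of $E$ is genuinely needed; the good part is handled correctly by the triangle inequality (with $\beta\ge 1$ absorbing the constants into $\beta^d$), and the bad part by the charging estimate. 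Second, the multiplicity bound you flag as the main technical obstacle does go through as sketched: for each selected $B(y_m,t_m)$ meeting $B(z_j,\rho_j)$ with $t_m>\rho_j$, pick $p_m$ in the intersection and center a ball of radius $\rho_j/2$ at the point of the segment from $p_m$ to $y_m$ at distance $\rho_j/2$ from $p_m$ (or at $y_m$ itself if $|p_m-y_m|<\rho_j/2$); these balls are contained in the pairwise disjoint $B(y_m,t_m)$ and all lie in $B(z_j,2\rho_j)$, so a volume count bounds their number by $4^n$. Two cosmetic caveats that do not affect the result: the comparison $\sum_j\rho_j^d\gtrsim t^d$ for a covering of $B(y,t)$ needs the trivial case distinction on whether some $\rho_j\ge t$ (in your bad-set application all $\rho_j<t_x$, so the volume argument applies directly), and the argument tacitly assumes $d>0$; the degenerate case $H^0$ is immediate since $E_\beta$ is bounded whenever $E$ is.
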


As a counterpart of Lemma \ref{l3.1}, we have the following result,
which was implicitly contained in the proof of \cite[Lemma 3.2]{yy3}.
For the convenience of the reader, we give some details here.

\begin{lemma}\label{l3.4}
Let $s$, $\tau$, $p$ and $q$ be as in Definition \ref{d3.2}.
If $f\in\dfh$ or $\dbh$, then there exists a canonical way to find a
representative of $f$ such that $f \in \cs_L'(\rn)$, where
$L:=(-1)\vee \lfloor s-n(\tau+1/p) \rfloor$.
\end{lemma}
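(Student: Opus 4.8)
The plan is to mirror the proof of Lemma \ref{l3.1} but work inside the Hausdorff-type sequence spaces $\widetilde{L^p_\tau(\ell^q(\rn,\zz))}$ and $\widetilde{\ell^q(L^p_\tau(\rn,\zz))}$ rather than the ordinary ones, keeping track of the shifted degree $L:=(-1)\vee\lfloor s-n(\tau+1/p)\rfloor$. I will treat only $\dfh$, the case $\dbh$ being analogous by replacing $\widetilde{L^p_\tau(\ell^q(\rn,\zz))}$ with $\widetilde{\ell^q(L^p_\tau(\rn,\zz))}$ throughout. Fix $f\in\dfh$ and $\vz\in\cs(\rn)$ satisfying \eqref{3.1}; by \cite[Lemma (6.9)]{fjw} choose $\psi\in\cs(\rn)$ also satisfying \eqref{3.1} with $\sum_{j\in\zz}\overline{\widehat{\vz}(2^j\xi)}\widehat{\psi}(2^j\xi)=1$ on $\rn\setminus\{0\}$, so that the Calder\'on reproducing formula gives $f=\sum_{m\in\zz}\widetilde{\psi}_m\ast\vz_m\ast f$ in $\cs'_\fz(\rn)$.

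First I would split the series at $m=0$. For the high-frequency part $\sum_{m\ge0}\widetilde{\psi}_m\ast\vz_m\ast f$, the standard estimates on the Schwartz seminorms of $\widetilde{\psi}_m\ast\vz_m\ast f$ together with the definition of $\|f\|_{\dfh}$ show this part converges in $\cs'(\rn)$ (indeed in $\cs'_L(\rn)$) and requires no polynomial correction; here the moment conditions on $\psi$ and $\vz$ let me absorb the factor coming from testing against $\cs_L(\rn)$. The delicate part is the low-frequency tail $\sum_{m<0}\widetilde{\psi}_m\ast\vz_m\ast f$: following \cite[Lemma 4.2]{yy2} (and the classical arguments in \cite[pp.\,153-155]{fj2}, \cite[Proposition 3.8 and Corollary 3.9]{bh}), I would produce polynomials $P_N$ of degree at most $L$ so that $g:=\lim_{N\to\fz}\bigl(\sum_{m=-N}^{N}\widetilde{\psi}_m\ast\vz_m\ast f+P_N\bigr)$ exists in $\cs'(\rn)$ and represents the class $f+\cp(\rn)$.

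The crux is to verify that the admissible degree of the polynomials really is $L=(-1)\vee\lfloor s-n(\tau+1/p)\rfloor$ rather than the value $(-1)\vee\lfloor s+n(\tau-1/p)\rfloor$ from Lemma \ref{l3.1}. This is exactly where the Hausdorff structure enters: when I estimate $|\vz_m\ast f(x)|$ for $m<0$ against a test function in $\cs_L(\rn)$, I must bound the $L^{p'}$-type dual pairing through the weight $\omega$ constrained by \eqref{2.2}. The exponent shift arises because the Hausdorff capacity condition $\int_\rn[N\omega]^{(p\vee q)'}\,dH^{n\tau(p\vee q)'}\le1$ forces a different power of the scale $2^{-m}$ than in the non-Hausdorff case; quantitatively, using H\"older's inequality and Lemma \ref{l3.3} to control $N_\beta\omega$ by $N\omega$, the admissible growth threshold becomes $s-n(\tau+1/p)$, so only moments up to order $L$ can be nonzero and the overshoot is cancelled by the $P_N$. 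I expect this capacity-weighted bookkeeping — pinning down the precise exponent through \eqref{2.2} and Lemma \ref{l3.3} — to be the main obstacle; the convergence of both pieces in $\cs'(\rn)$ is then routine. Identifying $f$ with $g$ yields $f\in\cs'_L(\rn)$, completing the proof.
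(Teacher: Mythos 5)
Your proposal follows essentially the same route as the paper: the Calder\'on reproducing formula $f=\sum_{m\in\zz}\widetilde{\psi}_m\ast\vz_m\ast f$, a polynomial correction $P_N$ of degree at most $L$ absorbing the low-frequency tail, and identification of $f$ with the limit $g$ in $\cs'(\rn)$. The only difference is that the paper outsources the existence of the $P_N$ with the shifted degree bound to \cite[Lemma 3.4]{ysay} rather than to \cite[Lemma 4.2]{yy2} (the non-Hausdorff reference you quote), and your capacity bookkeeping --- in effect the pointwise bound $\omega(x,t)\lesssim t^{-n\tau}$ forced by \eqref{2.2}, which flips the sign of the $n\tau$ term in the decay rate of $\|\vz_m\ast f\|_{L^\fz}$ and hence yields $L=(-1)\vee\lfloor s-n(\tau+1/p)\rfloor$ --- is precisely the content of that cited lemma.
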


\begin{proof}
We only consider the space $\dfh$ by similarity.
Let $f\in\dfh$ and $\vz\in\cs(\rn)$ satisfy \eqref{3.1},
By the proof of Lemma \ref{l3.1},
there exists a function $\psi\in\cs(\rn)$
satisfying \eqref{3.1} such that $f=\sum_{m\in\zz}
\widetilde{\psi}_m\ast\vz_m\ast f$ in $\cs'_\fz(\rn).$
From the arguments in \cite[Lemma 3.4]{ysay},
we deduce that there exists a sequence
$\{P_N\}_{N\in\nn}$ of polynomials, with degree not more than
$L$ for all
$N\in\nn$, such that $g:= \lim_{N\to\fz}(\sum_{m=-N}^N
\widetilde{\psi}_m\ast\vz_m\ast f+P_N)$ exists in $\cs'(\rn)$ and
$g$ is a representative of the equivalence class
$f+\mathcal{P}(\rn)$. We identify $f$
with its representative $g$. In this sense,
$f\in\cs'_L(\rn)$, which completes the proof of
Lemma \ref{l3.4}.
\end{proof}

Now we are ready to prove Theorem \ref{t3.3}.

\begin{proof}[Proof of Theorem \ref{t3.3}]
We first prove that for all $f\in\cs'_R(\rn)$,
\begin{eqnarray*}
\|f|\dfh\|_1&&\sim\|f|\dfh\|_2\sim\|f|\dfh\|_3\\
&&\sim\|f|\dfh\|_4\sim\|f|\dfh\|_5.
\end{eqnarray*}
By similarity, we only give the details for the first and second equivalences.
We begin with comparing the continuous local means and the continuous Peetre
maximal function of the local means.

Obviously, $\|f|\dfh\|_1\ls\|f|\dfh\|_2$. Next we show
$$\|f|\dfh\|_2\ls\|f|\dfh\|_1.$$

Let $\wz \oz$ be a nonnegative function on $\rr^{n+1}_+$
such that
\begin{eqnarray*}
\lf\|\lf\{\int_0^\fz t^{-sq}|\Phi_t*f(\cdot)|^q
[\wz\oz(\cdot,t)]^{-q}\dt\r\}^{1/q}\r\|_\lp\ls\|f|\dfh\|_1\noz.
\end{eqnarray*}
Notice that
\begin{eqnarray*}
\|f|\dfh\|_2
\sim\inf_\oz\lf\|\lf\{\sum_{\ell\in\zz}2^{\ell sq}\int_1^2
\lf[(\Phi_{2^{-\ell}t}^*f)_a(\cdot)\r]^q[\oz(\cdot,t)]^{-q}\dt\r\}^{1/q}\r\|_\lp.\noz
\end{eqnarray*}
Then by \eqref{3.3} and Minkowski's inequality, we see that
\begin{eqnarray}\label{3.13}
\|f|\dfh\|_2\ls&&\inf_\oz\lf\|\lf\{\sum_{\ell\in\zz}2^{\ell sq}
\lf[\sum_{k=0}^\fz 2^{-kNr}2^{(k+\ell)n}\r.\r.\r.\noz\\
&&\times\lf.\lf.\lf.\int_\rn\frac{\left[
\int_1^2|(\Phi_{k+\ell})_t *f(y)|^q[\omega(\cdot,2^{-\ell}t)
]^{-q}\,\frac{dt}t\r]^{\frac rq}}{
(1+2^\ell|\cdot-y|)^{ar}}\,dy\r]^{\frac qr}
\r\}^{1/q}\r\|_\lp.\qquad\hs
\end{eqnarray}

Choosing $\delta\in(0,a-n/r)$, by H\"older's inequality,
\eqref{3.12} and \eqref{3.13}, we know
that
\begin{eqnarray*}
&&\|f|\dfh\|^v_2\\
&&\hs\ls\sum_{k=0}^\fz2^{-k(N-\dz)v+knv/r}\inf_\oz\lf\|\lf
\{\sum_{\ell\in\zz}2^{\ell (s+n/r)q}\lf(\sum_{i=0}^\fz2^{-iar}
\r.\r.\r.\\
&&\hs\hs\lf.\lf.\lf.\times\int_{|\cdot-y|\sim2^{i-\ell}}\left[
\int_1^2|(\Phi_{k+\ell})_t *f(y)|^q[\omega(\cdot,2^{-\ell}t)
]^{-q}\,\frac{dt}t\r]^{r/q}\,dy\r)^{q/r}\r\}^{1/q}\r\|_\lp^v\noz\\
&&\hs\ls\sum_{k=0}^\fz2^{-k(N-\dz)v+knv/r}\sum_{i=0}^\fz2^{-i(a-\delta)v}\inf_\oz
\lf\|\lf\{\sum_{\ell\in\zz}2^{\ell (s+n/r)q}\r.\r.\\
&&\hs\hs\lf.\lf.\times\lf(
\int_{|\cdot-y|\sim2^{i-\ell}}\lf[
\int_1^2|(\Phi_{k+\ell})_t *f(y)|^q[\omega(\cdot,2^{-\ell}t)
]^{-q}\,\frac{dt}t\r]^{r/q}\,dy\r)^{q/r}\r\}^{1/q}\r\|_\lp^v,
\end{eqnarray*}
where $v$ is as in Remark \ref{r3.2}(ii) and
\begin{equation}\label{3.x1}
|\cdot-y|\sim2^{i-\ell}\quad{\rm means\ that }\quad
2^{i-\ell}\le|\cdot-y|<2^{i-\ell+1}.
\end{equation}
For
$(x,s)\in\rn\times(0,\fz)$, let
$$\oz_{k,i}(x,s)\ev2^{-(i+k)n\tau}\sup\{\wz\oz(y,t):\ |x-y|<2^{i+1}t,\
2^{-k-1}\le s/t\le2^{k+1}\}.$$
Then by Lemma \ref{l3.3},  $\oz_{k,i}(x,s)$ also satisfies
\eqref{2.2} modulo a positive constant. Thus, by
choosing $\oz\ev\oz_{k,i}$ and
the Fefferman-Stein vector-valued inequality \eqref{2.3}, we have
\begin{eqnarray*}
&&\|f|\dfh\|_2^v\\
&&\hs\ls\sum_{k=0}^\fz\sum_{i=0}^\fz2^{-k(N-\dz)v+knv/r}2^{-i(a-\delta)v}
\lf\|\lf\{\sum_{\ell\in\zz}2^{\ell (s+n/r)q}\r.\r.\\
&&\hs\hs\lf.\lf.\lf.\times\lf(
\int_{|\cdot-y|\sim2^{i-\ell}}\r.\left[
\int_1^2|(\Phi_{k+\ell})_t *f(y)|^q[\omega_{k,i}(\cdot,2^{-\ell}t)
]^{-q}\,\frac{dt}t\r]^{r/q}\,dy\r)^{q/r}\r\}^{1/q}\r\|_\lp^v\noz\\
&&\hs\ls\sum_{k=0}^\fz\sum_{i=0}^\fz2^{-k(N-\dz)v+knv/r}2^{-i(a-\delta)v+(k+i)n\tau
v} 2^{inv/r}
\lf\|\lf\{\sum_{\ell\in\zz}2^{\ell sq}
\r.\r.\\
&&\hs\hs\lf.\lf.\times\lf[M\left(\left[
\int_1^2|(\Phi_{k+\ell})_t *f|^q[\wz{\omega}(\cdot,2^{-k-\ell}t)
]^{-q}\,\frac{dt}t\r]^{r/q}\r)\r]^{q/r}\r\}^{1/q}\r\|_\lp^v\noz\\
&&\hs\ls\|f|\dfh\|_1^v,\noz
\end{eqnarray*}
namely, $\|f|\dfh\|_2\ls\|f|\dfh\|_1.$

Now let us compare the characterization by the continuous Peetre maximal function
of the local means and the characterization by the tent space.
By the definitions of $\|f|\dfh\|_2$ and $\|f|\dfh\|_3$, we immediately obtain
$$\|f|\dfh\|_3\ls\|f|\dfh\|_2.$$

Next we prove that $\|f|\dfh\|_2\ls\|f|\dfh\|_3.$
By \eqref{3.9}, we see that
\begin{eqnarray*}
&&\|f|\dfh\|_2\\
&&\hs\ls\inf_\oz\lf\|\lf\{\sum_{\ell\in\zz} 2^{\ell sq}
\lf(\sum_{k=0}^\fz2^{-kNr+2(k+\ell)n}\sum_{i=0}^\fz 2^{-iar}
\int_{|\cdot-y|\sim 2^{i-\ell}}\r.\r.\r.\noz\\
&&\hs\hs\times\lf.\lf.\lf.
\left[\int_1^2\int_{|z|<2^{-(k+\ell)}t}
|(\Phi_{k+\ell})_t*f(y+z)|^q[\oz(\cdot,2^{-\ell}t)]^{-q}
\,dz\,\frac{dt}t\r]^{\frac rq}\,
dy\r)^{\frac qr}\r\}^{\frac 1q}\r\|_\lp,\noz
\end{eqnarray*}
where $|\cdot-y|\sim2^{i-\ell}$ means the same as in \eqref{3.x1}.
Let
$$\oz_{i,k}(x,t)\ev
2^{-(i+k+4)n\tau}\sup\lf\{\wz\oz(y,s):\ |x-y|\le2^{i+1}t,\
2^{-k-2}\le t/s\le2^{k+2}\r\}.$$
Applying Lemma \ref{l3.3} with $\beta=2^{k+i+4}$, we know that
$\oz_{i,k}$ satisfies \eqref{2.2} modulo a positive constant.
Choosing $\delta\in(0,a-n/r)$ and applying H\"older's inequality, by \eqref{3.12},
we then conclude that
\begin{eqnarray*}
&&\|f|\dfh\|_2^v\\
&&\hs\ls\sum_{k=0}^\fz2^{-k(N-\dz)v+2knv/r}\sum_{i=0}^\fz2^{-i(a-\delta)v}
\inf_{\oz}\lf\|\lf\{\sum_{\ell\in\zz} 2^{\ell sq}
\lf(2^{2\ell n}\int_{|\cdot-y|\sim 2^{i-\ell}}\r.\r.\r.\noz\\
&&\hs\hs\times\lf.\lf.\lf.
\left[\int_1^2\int_{|z|<2^{-(k+\ell)}t}
|(\Phi_{k+\ell})_t*f(y+z)|^q[\oz(\cdot,2^{-\ell}t)]^{-q}\,dz\,\frac{dt}t\r]^{r/q}\,
dy\r)^{q/r}\r\}^{1/q}\r\|_\lp^v\noz\\
&&\hs\ls\sum_{k=0}^\fz2^{-k(N-\dz)v+knv/r}
\sum_{i=0}^\fz2^{-i(a-\dz)v}2^{(i+k)n\tau v}2^{inv/r}
\lf\|\lf\{\sum_{\ell\in\zz}
\r.\r.\noz\\
&&\hs\hs\times \lf.\lf.
\lf[M\lf(\int_{2^{-(k+\ell)}}^{2^{-(k+\ell)+1}}
\int_{|z|< t}t^{-sq} |\Phi_{t}*f(\cdot+z)|^q
[\wz\oz(\cdot+z,t)]^{-q}\,dz\,\frac{dt}{t^{n+1}}\r)
\r]^{q/r} \r\}^{1/q}\r\|_\lp^v\noz,
\end{eqnarray*}
where $v$ is as in Remark \ref{r3.2}(ii).
From this and the Fefferman-Stein vector-valued maximal inequality \eqref{2.3}, we
further deduce that
$\|f|\dfh\|_2\ls\|f|\dfh\|_3$.

Finally we show that
$\{f\in\cs'_R(\rn):\ \|f|\dfh\|_i<\fz\}$ for $i\in\{1,\cdots,5\}$
characterizes the space $\dfh$.
Indeed, similar to Step 2 of the proof of
Theorem \ref{t3.1}, with Lemma \ref{l2.1} replaced by Lemma \ref{l2.2},
we know that
$\|f|\dfh\|_i$, $i\in\{1,\cdots,5\}$, are independent of the choice
of $\Phi$ satisfying \eqref{3.2}, which further implies that
$\|f|\dfh\|_i$ are equivalent to $\|f\|_{\dfh}$.

By $R\ge L$ and Lemma \ref{l3.4},
we know that if $f\in\dfh$, then $f\in\cs'_R(\rn)$, which,
together with the fact
that $\cs'_R(\rn)\subset \cs'_\fz(\rn)$, implies the desired conclusion,
and hence completes the proof of Theorem \ref{t3.3}.
\end{proof}

Similar to Theorem \ref{t3.3}, for the space $\dbh$,
we also have the following characterizations. We omit the details here.

\begin{theorem}\label{t3.4}
Let $s\in\rr$, $p\in(1,\fz)$,
$q\in[1,\fz)$, $\tau\in[0, 1/{(p\vee q)'}]$, $R\in\zz_+\cup\{-1\}$ and
$a\in(n(1/p+\tau), \fz)$ such that $s+n\tau<R+1$ and $\Phi$ be as in \eqref{3.2}.
Then the space $\dbh$ is
characterized by
$$\dbh=\{f\in\cs'_R(\rn):\ \|f|\dbh\|_i<\fz\},\quad i\in\{1,\cdots,4\},$$
where
\begin{eqnarray*}
\|f|\dbh\|_1\ev
\inf_\oz\lf\{\int_0^\fz t^{-sq}\|\Phi_t*f(\cdot)[\oz
(\cdot,t)]^{-1}\|_\lp^q\dt\r\}^{1/q},\noz
\end{eqnarray*}
\begin{eqnarray*}
\|f|\dbh\|_2\ev
\inf_\oz\lf\{\int_0^\fz t^{-sq}\|(\Phi_t^*f)_a(\cdot)[\oz
(\cdot,t)]^{-1}\|_\lp^q\dt\r\}^{1/q},\noz
\end{eqnarray*}
\begin{eqnarray*}
\|f|\dbh\|_3\ev
\|\{2^{ks}\Phi_k*f\}_{k\in\zz}\|_{\wz{\ell^q(L^p_\tau(\rn,\zz))}}
\end{eqnarray*}
and
\begin{eqnarray*}
\|f|\dbh\|_4\ev
\|\{2^{ks}(\Phi_k^*f)_a\}_{k\in\zz}\|_{\wz{\ell^q (L^p_\tau(\rn,\zz))}};
\end{eqnarray*}
moreover, when $a\in(2n/p+n\tau, \fz)$,
$$\dbh=\{f\in\cs'_R(\rn):\ \|f|\dbh\|_5<\fz\},$$
where
\begin{eqnarray*}
\|f|\dbh\|_5
\ev\inf_\oz\lf\{\sum_{j\in\zz} 2^{jsq}
\lf\| \lf[\int_\rn\frac{2^{jn}|\Phi_j\ast
f(\cdot+y)|^r}{(1+2^j|y|)^{ar}}
\,dy\r]^{\frac1r}[\oz
(\cdot,t)]^{-1}\r\|^q_\lp\r\}^{\frac1q},\noz
\end{eqnarray*}
where $\omega$ runs over all nonnegative Borel measurable
functions on $\R_+^{n+1}$
satisfying \eqref{2.2} and $r\in (0, p)$ satisfying that $(a-n\tau)r>2n$.
Furthermore, all $\|\cdot|\dbh\|_i$, $i\in\{1,\cdots,5\}$, are equivalent
(quasi-)norm in $\dbh$.
\end{theorem}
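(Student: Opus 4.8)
The plan is to transcribe the four-step scheme from the proof of Theorem \ref{t3.3}, adapting every estimate to the reversed order of the (quasi-)norm $\|\cdot\|_{\wz{\ell^q(L^p_\tau(\rn,\zz))}}$, in which the $L^p$-integration is innermost and the $\ell^q$-summation outermost. The one genuine simplification relative to the Triebel-Lizorkin-Hausdorff case is that, with $\ell^q$ now outside, the vector-valued Fefferman-Stein inequality \eqref{2.3} is never needed: since the Hardy-Littlewood maximal operator $M$ acts only in the spatial variable, it may be applied pointwise in the scale parameter at the scalar $L^{p/r}(\rn)$-level, which is bounded once $r<p$ and $p\in(1,\fz)$. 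This is exactly why the hypothesis reads $a>n(1/p+\tau)$ with $r\in(0,p)$, and no $q$ enters, rather than the $a>n(1/(p\wedge q)+\tau)$, $r<p\wedge q$ of Theorem \ref{t3.3}. Throughout, Lemma \ref{l2.1} is replaced by Lemma \ref{l2.2}(i), and the Aoki-Rolewicz exponent $v=1/(1+(p\vee q)')$ together with \eqref{3.11} is used to pull the auxiliary sums outside the (quasi-)norm; the Besov scale carries no Lusin-area term, so only the five quantities $\|f|\dbh\|_1,\dots,\|f|\dbh\|_5$ occur.

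First I would prove $\|f|\dbh\|_1\sim\|f|\dbh\|_2$. The bound $\|f|\dbh\|_1\le\|f|\dbh\|_2$ is immediate. For the converse, after discretizing $\int_0^\fz\,dt/t$ into $\sum_\ell\int_1^2$, I would insert the key estimate \eqref{3.3} of Lemma \ref{l3.2}, raise it to the power $r$, apply Hölder's inequality in the $k$-summation (which introduces the auxiliary parameter $\dz$), and split the $y$-integral over the dyadic annuli $|\cdot-y|\sim 2^{i-\ell}$ of \eqref{3.x1}. For a near-optimal weight $\wz\oz$ realizing $\|f|\dbh\|_1$ I would introduce the shifted, dilated weight
\[
\oz_{k,i}(x,s):=2^{-(i+k)n\tau}\sup\{\wz\oz(y,t):\ |x-y|<2^{i+1}t,\ 2^{-k-1}\le s/t\le 2^{k+1}\},
\]
for which Lemma \ref{l3.3}, applied with $\bz\sim 2^{k+i}$, guarantees that $\oz_{k,i}$ still satisfies \eqref{2.2} modulo a fixed constant; the compensating factor $2^{(i+k)n\tau}$ is precisely what the geometric series will absorb. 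Raising to the Aoki-Rolewicz power $v$, invoking \eqref{3.11} to extract the sums in $k$ and $i$, and applying the scalar weighted bound $\|M(\cdot)\|_{L^{p/r}}\ls\|\cdot\|_{L^{p/r}}$ (legitimate since $r<p$) at each scale then gives $\|f|\dbh\|_2\ls\|f|\dbh\|_1$; the two geometric series converge for suitably small $\dz>0$ and large $N$ exactly because $a>n(1/p+\tau)$ permits a choice $r\in(n/a,p)$ with $n/r+n\tau<a$.

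Discretizing the continuous scale parameter then identifies $\|f|\dbh\|_2$ with the discrete Peetre form $\|f|\dbh\|_4$ and $\|f|\dbh\|_1$ with the discrete local-mean form $\|f|\dbh\|_3$, so that $\|f|\dbh\|_3\sim\|f|\dbh\|_4$ follows from the first equivalence. Independence of $\Phi$ is obtained as in Step 2 of the proof of Theorem \ref{t3.1}: the pointwise inequality \cite[(2.89)]{u10}, which dominates $2^{\ell s}(\Psi_\ell^*f)_a$ by $\sum_k 2^{-|k-\ell|\dz}2^{ks}(\Phi_k^*f)_a$ with $\dz=\min\{R+1-s,\,K+1-a+s\}$, fed into Lemma \ref{l2.2}(i) (admissible since $s+n\tau<R+1$ and $K$ may be taken with $K+1-a+s>n\tau$, so $\dz>n\tau$), yields $\|f|\dbh\|_4^\Psi\ls\|f|\dbh\|_4^\Phi$; by symmetry each $\|f|\dbh\|_i$ is independent of $\Phi$ and hence, choosing $\Phi=\vz$, comparable to $\|f\|_{\dbh}$. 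For the sixth-type quantity $\|f|\dbh\|_5$ under the stronger hypothesis $a>2n/p+n\tau$, I would argue as in Step 4 of Theorem \ref{t3.1}: \cite[(2.48)]{u10} combined with Lemma \ref{l2.2}(i) gives $\|f|\dbh\|_3\ls\|f|\dbh\|_5$, while splitting the kernel as $(1+2^j|y|)^{ar}=(1+2^j|y|)^{n\tau r}(1+2^j|y|)^{(a-n\tau)r}$—the first factor paying for the transport of $\oz$ via Lemma \ref{l3.3} and the second being halved to bound the supremum by a Peetre maximal function and to secure convergence of $\int_\rn 2^{jn}(1+2^j|y|)^{-(a-n\tau)r}\,dy$—yields $\|f|\dbh\|_5\ls\|f|\dbh\|_4\sim\|f|\dbh\|_3$ precisely when $(a-n\tau)r>2n$. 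Finally, Lemma \ref{l3.4} furnishes, for every $f\in\dbh$, a representative in $\cs'_L(\rn)$ with $L=(-1)\vee\lfloor s-n(\tau+1/p)\rfloor\le R$, so that $f\in\cs'_R(\rn)$ and the five sets $\{f\in\cs'_R(\rn):\ \|f|\dbh\|_i<\fz\}$ indeed coincide with $\dbh$.

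The main obstacle, exactly as for Theorem \ref{t3.3}, is the bookkeeping of the Hausdorff capacity through the dilated weights $\oz_{k,i}$: one must check that a spatial dilation by $\bz\sim 2^{k+i}$ inflates the relevant capacity only by the factor $\bz^{d}$ with $d=n\tau(p\vee q)'$ afforded by Lemma \ref{l3.3}, so that the induced powers $2^{(k+i)n\tau}$ are then strictly dominated by the decay $2^{-k(N-\dz)}$ and $2^{-i(a-\dz)}$ coming from \eqref{3.3} and the annular decomposition after raising to the power $v$. The exponent thresholds $a>n(1/p+\tau)$ (and $a>2n/p+n\tau$, $(a-n\tau)r>2n$ for $\|f|\dbh\|_5$) are calibrated to keep this competition strict; once they are in force, the remaining manipulations are a routine weighted variant of the Besov arguments behind Theorem \ref{t3.2}.
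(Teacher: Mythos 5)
Your proposal is correct and follows essentially the route the paper intends: the authors omit the proof of Theorem \ref{t3.4} precisely because it is the argument of Theorem \ref{t3.3} transposed to the $\widetilde{\ell^q(L^p_\tau(\rn,\zz))}$ scale, and that transposition (Lemma \ref{l3.2} plus the annular decomposition, the dilated weights $\oz_{k,i}$ controlled by Lemma \ref{l3.3}, the Aoki--Rolewicz exponent via \eqref{3.11}, Lemma \ref{l2.2}(i) in place of Lemma \ref{l2.1}, and Lemma \ref{l3.4} for the reservoir) is exactly what you carry out. Your observation that the outermost $\ell^q$-sum reduces the vector-valued Fefferman--Stein inequality to the scalar $L^{p/r}(\rn)$-boundedness of $M$, which is why the hypothesis reads $a>n(1/p+\tau)$ rather than $a>n(1/(p\wedge q)+\tau)$, is the right adaptation.
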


With slight modifications for the proofs of Theorems \ref{t3.3} and \ref{t3.4},
we obtain the following observation.

\begin{remark}\label{r3.3}
Theorems \ref{t3.3} and \ref{t3.4} are still
true if we replace $(\Phi_t^*f)_a$ in Theorems \ref{t3.3} and \ref{t3.4}
by $(\widetilde{\Phi}_t^*f)_a$, which is defined in
Remark \ref{r3.1}. The counterparts of Corollaries \ref{c3.1} and
\ref{c3.2} keep valid also in the framework of Theorems \ref{t3.3} and
\ref{t3.4}.
\end{remark}

\section{The Coorbit Space Theory for Quasi-Banach Spaces\label{Classcoo}}

In this section we recall some basic results from \cite{ra05-3}. This theory
is the continuation to quasi-Banach spaces of the classical coorbit space theory
developed by Feichtinger and Gr\"ochenig \cite{FeGr86,FeGr89a,FeGr89b,Gr91} in
the eighties. The
ingredients are a \emph{locally compact group}
$\mathcal{G}$ with \emph{identity} $e$, a \emph{Hilbert space} $\mathcal{H}$ and an
\emph{irreducible,
unitary and continuous representation} $\pi:\mathcal{G} \to
\mathcal{L}(\mathcal{H})$, which is at least integrable. One can associate a
(quasi-)Banach space $\CoY$ to any
solid, translation-invariant (quasi-)Banach space $Y$ of functions on the group
$\mathcal{G}$. This approach provides a powerful discretization machinery for
the space $\CoY$, namely, a universal approach to
atomic decompositions and Banach frames.
In connection with smoothness spaces of Besov-Triebel-Lizorkin-Hausdorff type,
we measure smoothness of a
function in decay properties of the continuous wavelet transform $W_g f$; see
\eqref{wg} below. We show in Section \ref{ax+b} that homogeneous Besov and
Triebel-Lizorkin type spaces as well as homogeneous Besov-Hausdorff and
Triebel-Lizorkin-Hausdorff spaces are represented as coorbits of properly
chosen spaces $Y$ on the $ax+b$-group
$\mathcal{G}$.

\subsection{Function Spaces on $\mathcal{G}$\label{FSgroup}}

Integration on $\mathcal{G}$ is always
with respect to the left Haar measure $d\mu(x)$. The Haar module on
$\mathcal{G}$ is denoted by $\Delta$.
We define further $L_x F(y) \ev F(x^{-1}y)$ and
$R_x F(y) \ev F(yx)$, for all $x,y \in \mathcal{G}$, the
\emph{left} and \emph{right translation
operators,} respectively. A \emph{quasi-Banach function space} $Y$
on the group $\mathcal{G}$ is
supposed to have the following properties:

  {\rm (i)} $Y$ contains all characteristic functions $\chi_K$ of
compact subsets $K \subset \mathcal{G}$;

  {\rm (ii)} $Y$ is invariant under the left and right translations $L_x$ and $R_x$,
which represent in addition continuous operators on $Y$;

  {\rm (iii)} $Y$ is solid, namely, $H\in Y$ and $|F(x)| \le |H(x)|$
almost everywhere imply that $F\in Y$ and $\|F|Y\| \le \|H|Y\|$.

By the Aoki-Rolewicz theorem, there exists $p\in(0,1]$ such that Y has an
equivalent $p$-norm (see \cite[p.\,239]{ra05-3} for the definition),
moreover, $\|\sum_{j\in\zz}y_j|Y\|^p\ls \sum_{j\in\zz}\|y_j|Y\|^p$
for all $\{y_j\}_{j\in\zz}\subset Y$.

A continuous weight $w$ is called \emph{sub-multiplicative}
if $w(xy) \le w(x) w(y)$ for all $x,y \in \mathcal{G}$.
The \emph{(quasi-)Banach space }$L^p_w(\mathcal{G})$, $p\in(0,\infty]$, of functions
$F$ on the group $\mathcal{G}$ is defined via the \emph{norm}
$\|F|L^p_w(\mathcal{G})\| :=\{\int_{\mathcal{G}}
|F(x)w(x)|^p\,d\mu(x)\}^{1/p}\,,$
where the usual modification is made when $p=\infty$\,. If $w := 1$ then
we simply write $L_w^p(\cg)$ by
$L^p(\mathcal{G})$\,. It is easy to show that these spaces have left and
right translation invariances if $w$ is sub-multiplicative.

\subsection{Wiener-Amalgam Spaces}

We follow the notation in \cite{ra05-3}. Let $B$ be one of
the spaces $L^1(\mathcal{G})$ or $L^{\infty}(\mathcal{G})$. Choose one
relatively compact neighborhood $Q$ of $e \in \mathcal{G}$ and define
the \emph{control function}
$K(F,Q,B)(x) := \|(L_x \chi_{Q})F|B\|$ for all $x\in \mathcal{G}$,
where $F$ is locally contained in $B$,
which means $F\chi_K\st B$ for any compact subset $K$ of $\cg$
and is denoted by $F \in B_{\mathrm{loc}}$. Let now
$Y$ be some solid quasi-Banach space of functions on $\mathcal{G}$ containing
the characteristic function of any compact subset of $\mathcal{G}$. The
\emph{Wiener-Amalgam space} $W(B,Y)$ is then defined by
$$
        W(B,Y):= W(B,Y,Q) :=\{F \in B_{\mathrm{loc}}~:~K(F,Q,B)\in Y\}
$$
with (quasi-)norm
$\|F|W(B,Y,Q)\|:=\|K(F,Q,B)|Y\|,$
which has an equivalent $p$-norm with $p$ being
the exponent of the quasi-norm of $Y$.

The following lemma is essentially Theorem 3.1 in the preprint
version of \cite{ra05-3}.

\begin{lemma} The following statements are equivalent:

 {\rm (i)} The spaces $W(L^{\infty}(\cg),Y) = W(L^{\infty}(\cg),Y,Q)$
 is independent of the
 choice of the neighborhood $Q$ of $e$ (with equivalent norms for
 different choices).

 {\rm (ii)} The space $W (L^{\infty}(\cg) , Y, Q)$ is right translation invariant.
\end{lemma}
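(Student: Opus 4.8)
The plan is to make the control function for $B=L^\infty(\cg)$ completely explicit and then reduce both implications to two elementary mechanisms. Since $(L_x\chi_Q)(y)=\chi_{xQ}(y)$, one has $K(F,Q,L^\infty)(x)=\mathop{\mathrm{ess\,sup}}_{y\in xQ}|F(y)|$, so this control function is \emph{monotone} in the window (if $Q\subset Q'$, then $K(F,Q,L^\infty)\le K(F,Q',L^\infty)$ pointwise) and obeys the exact identity
\[
K(R_uF,Q,L^\infty)(x)=\mathop{\mathrm{ess\,sup}}_{y\in xQ}|F(yu)|=\mathop{\mathrm{ess\,sup}}_{z\in xQu}|F(z)|=K(F,Qu,L^\infty)(x),\quad u\in\cg .
\]
Thus a right translation of $F$ is the same as replacing the window $Q$ by $Qu$. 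Throughout I would use that $Y$ is solid and, by the Aoki--Rolewicz theorem, carries an equivalent $p$-quasi-norm satisfying $\|\sum_j y_j|Y\|^p\ls\sum_j\|y_j|Y\|^p$.

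For (i)$\Rightarrow$(ii), fix $u\in\cg$ and $F\in W(L^\infty(\cg),Y,Q)$. By the identity above, $\|R_uF|W(L^\infty(\cg),Y,Q)\|=\|K(F,Qu,L^\infty)|Y\|$. Since $\overline{Qu}$ is compact, I would enclose it in a relatively compact neighborhood $Q^\sharp$ of $e$ (for instance $Q^\sharp=V\cup\overline{Qu}$ with $V$ a small neighborhood of $e$), so that $Qu\subset Q^\sharp$ and monotonicity gives $K(F,Qu,L^\infty)\le K(F,Q^\sharp,L^\infty)$ pointwise. Solidity then yields $\|R_uF|W(\cdot,Q)\|\le\|F|W(\cdot,Q^\sharp)\|$, and the assumed window-independence (i) turns the right-hand side into a constant times $\|F|W(\cdot,Q)\|$. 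Hence each $R_u$ is bounded, which is (ii); this direction uses nothing beyond solidity and (i).

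For (ii)$\Rightarrow$(i), given two neighborhoods $Q_1,Q_2$ of $e$, I would compare the two amalgam norms by covering on the correct side. Choosing $g_1,\dots,g_m\in\cg$ with $\overline{Q_2}\subset\bigcup_{i=1}^m Q_1g_i$, a computation parallel to the identity above gives the pointwise bound $K(F,Q_2,L^\infty)\le\max_i K(R_{g_i}F,Q_1,L^\infty)$. Applying $\|\cdot|Y\|$, solidity, and the $p$-quasi-norm produces $\|F|W(\cdot,Q_2)\|^p\le\sum_i\|R_{g_i}F|W(\cdot,Q_1)\|^p$, and the assumed right-invariance of the amalgam bounds each summand by a constant times $\|F|W(\cdot,Q_1)\|$. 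By symmetry in $Q_1,Q_2$ one obtains the reverse inequality, i.e. the window-independence (i).

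The crux, and the step I expect to be the genuine obstacle, is precisely the side on which one covers in the second implication. In a non-abelian $\cg$ the naive comparison $\overline{Q_2}\subset\bigcup_i g_iQ_1$ produces $\max_i R_{g_i}[K(F,Q_1,L^\infty)]$, i.e. right translates of the \emph{control function}, whose $Y$-norm one could only control by invoking right-invariance of $Y$ itself. Covering instead by the right translates $Q_1g_i$ makes the translation fall on $F$ rather than on the control function, so that it is exactly the right-invariance of the \emph{amalgam} space (hypothesis (ii)) that gets used. I would keep the routine measure-theoretic bookkeeping in mind as well (the behavior of essential suprema and null sets under left and right translation, the finiteness of the covers, and the harmless $u$- or $Q_i$-dependence of the constants), but these are straightforward once the covering side has been chosen correctly.
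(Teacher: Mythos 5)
Your proof is correct. Note that the paper itself gives no proof of this lemma: it simply defers to Theorem 3.1 of the preprint version of Rauhut's coorbit paper, so there is nothing in the text to compare against line by line. Your argument is the standard one and, as far as I can tell, matches what that reference does: the explicit formula $K(F,Q,L^{\infty})(x)=\mathop{\mathrm{ess\,sup}}_{y\in xQ}|F(y)|$, the exact identity $K(R_uF,Q,L^{\infty})=K(F,Qu,L^{\infty})$, monotonicity in the window plus solidity for (i)$\Rightarrow$(ii), and a finite cover $\overline{Q_2}\subset\bigcup_i Q_1g_i$ by \emph{right} translates together with the $p$-quasi-norm for (ii)$\Rightarrow$(i). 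Your identification of the covering side as the crux is exactly right: covering by left translates $g_iQ_1$ would shift the control function rather than $F$ and would force you to invoke right invariance of $Y$ itself instead of hypothesis (ii). (In this paper's standing assumptions $Y$ is in fact already right-translation invariant, so for the spaces actually used here both statements hold automatically; your proof correctly works at the more general level where the equivalence has content.) The only bookkeeping worth making explicit in a final write-up is that right translation multiplies Haar measure by $\Delta(u)$ and hence preserves null sets, which justifies the change of variables inside the essential supremum.
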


\subsection{Sequence Spaces\label{sequspace}}

\begin{definition} Let $X \ev \{x_i\}_{i \in I}$ be some discrete set of
points in $\mathcal{G}$ and $V$ be a relatively compact neighborhood
of $e\in \mathcal{G}$\,.

{\rm (i)} $X$ is called \emph{$V$-dense} if $\mathcal{G} = \cup_{i\in I}
x_iV$.

{\rm (ii)} $X$ is called \emph{relatively separated} if for all compact sets
$K\subset \mathcal{G}$, there exists a positive constant $C_K$ such that
$\sup_{j\in I} \sharp\{i\in I~:~x_iK \cap x_jK \neq
\emptyset\} \le C_K\,,$
where and in what follows, $\sharp E$ denotes the \emph{cardinality} of the
set $E$.

{\rm (iii)} $X$ is called \emph{$V$-well-spread} (or simply \emph{well-spread}) if
it is both relatively separated and $V$-dense.
\end{definition}

\begin{definition}\label{sequsp} For a family $X\ev\{x_i\}_{i\in I}$ which is
$V$-well-spread
with respect to a relatively compact neighborhood $V$ of $e\in
\mathcal{G}$, the \emph{sequence space $Y^{\sharp}$} associated to
$Y$ is defined as
\begin{equation}\nonumber
    Y^{\sharp} \ev \lf\{\{\lambda_i\}_{i\in I}~:~
    \|\{\lambda_i\}_{i\in I}|Y^{\sharp}\| \ev \lf\|\sum\limits_{i\in I}
    |\lambda_i|\chi_{x_iV}|Y\r\|<\infty\,\r\}\,.
\end{equation}
\end{definition}

\subsection{Coorbit Spaces}\label{s4.4}
Having a Hilbert space $\mathcal{H}$ and an integrable, irreducible, unitary and
continuous
representation $\pi:\mathcal{G} \to \mathcal{L}(\mathcal{H})$, then the \emph{general
voice transform} of $f\in \mathcal{H}$ with respect to a fixed atom $g\in\mathcal{H}$
is defined as the \emph{function $V_gf$} on the group $\mathcal{G}$
given by
\begin{equation}\label{voice}
    V_gf(x) := \langle \pi(x) g,f\rangle\,,\quad x\in\cg,
\end{equation}
where the bracket denotes the \emph{inner product} in $\mathcal{H}$\,.

\begin{definition}\label{H1w}  For a sub-multiplicative weight $w(\cdot)\geq 1$
on $\mathcal{G}$, the \emph{space
$A_w \subset \mathcal{H}$} of admissible
vectors is defined by
$A_w := \{g\in \mathcal{H}~:~V_g g \in L^1_w(\mathcal{G})\}\,.$
If $A_w \neq \{0\}$ and $g\in A_w\setminus\{0\}$, further define
$\mathcal{H}^1_w := \{f\in \mathcal{H}~:~\|f|\mathcal{H}^1_w\| = \|V_g
f|L^1_w(\mathcal{G})\| <\infty\}\,.$
Finally, denote by $(\mathcal{H}^1_w)^{\sim}$ the \emph{canonical anti-dual} of
$\mathcal{H}^1_w$, namely, the space of all conjugate linear functionals on
$\mathcal{H}^1_w$\,.
\end{definition}
We see immediately that $A_w \subset  \mathcal{H}^1_w \subset \mathcal{H}$. The
voice transform \eqref{voice} is extended to
$\mathcal{H}_w\times (\mathcal{H}^1_w)^{\sim}$ by the usual dual pairing.
Notice that the space $\mathcal{H}^1_w$ is considered as the \emph{space of test
functions} and $(\mathcal{H}^1_w)^{\sim}$ as \emph{reservoir} or
the \emph{space of distributions}.

To treat also (quasi-)Banach spaces, we need the modification \cite{ra05-3} of
the classical coorbit space theory. For
$p\in(0,1]$ and
a sub-multiplicative weight $w$, let us define the
following \emph{set of analyzing vectors}
\begin{equation}\label{4.2}
  B_w^p := \{g\in \mathcal{H}~:~V_{g}g \in W(L^{\infty}(\cg),\,L^p_w(\cg))\}\,.
\end{equation}
In the sequel, we admit only those parameters $p$ and $w$
such that $B_w^p$ contains non-zero functions.

Before defining the coorbit space $\CoY$, we need to define the \emph{weight $w_Y$}
which depends on the space $Y$ on $\mathcal{G}$ satisfying
{\rm (i)} - {\rm (iii)} in Subsection \ref{FSgroup}.
We define further
\begin{equation}\label{wy}
    w_Y(x) := \max\{\|L_{x}\|, \|L_{x^{-1}}\|, \|R_{x}\|,
\Delta(x^{-1})\|R_{x^{-1}}\| \},\quad x\in \mathcal{G}\,,
\end{equation}
where the operator norms are considered from $W(L^{\infty}(\cg),\,Y)$ to
$W(L^{\infty}(\cg),\,Y)$\,. Finally, we put
\begin{equation}\label{4.4}
     B(Y):= B^p_{w_Y} \cap A_{w_Y}\,,
\end{equation}
where $p\in(0,1]$ is such that $Y$ has a $p$-norm.

\begin{definition}\label{defcoob} Let $Y$ be a (quasi-)Banach space,
with an equivalent $p$-norm, on
$\mathcal{G}$ satisfying {\rm (i)}-{\rm (iii)} in Subsection \ref{FSgroup}
and let the weight $w_Y(x)$ be given by \eqref{wy}. Let further
$g\in B(Y)$. The \emph{space $\CoY$}, called the \emph{coorbit space} of $Y$,
is defined by
\begin{equation}\label{defcoorbit}
  \CoY \ev \{f\in (\mathcal{H}_w^1)^{\sim}~:~V_g f \in
W(L^{\infty}(\cg),\,Y)\}
  \end{equation}
  with $\|f|\CoY\| \ev \|V_g f|W(L^{\infty}(\cg),\,Y)\|.$
\end{definition}

The following basic properties are proved for instance in Theorem 4.3 in the
preprint version of \cite{ra05-3}. Notice that Theorem \ref{indep}(i)
is also included in \cite[Theorem 4.4]{ra05-3}
(see also \cite[Theorem 4.5.13]{ra05-6}
for the case of Banach spaces).

\begin{theorem}\label{indep} {\rm (i)} The space $\CoY$ is a (quasi-)Banach space
independent of the analyzing vector $g \in B(Y)$.

{\rm (ii)} The definition of the space $\CoY$ is independent of the
reservoir in the following sense: Assume that $S \subset \mathcal{H}^1_w$
is a non-trivial locally convex vector space which is invariant under
$\pi$. Assume further that there exists a non-zero
      vector $g \in S\cap B(Y)$ for which the reproducing formula
      $V_g f = V_g g \ast V_g f$ holds true for all $f\in S^{\sim}$. Then
      $$\CoY = \{f\in (\mathcal{H}_w^1)^{\sim}~:~V_g f \in Y\} = \{f\in
        S^{\sim}~:~V_g f \in W(L^{\infty}(\cg),\,Y)\}\,.$$
\end{theorem}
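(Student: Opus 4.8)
The plan is to prove both parts through the standard \emph{correspondence principle} of coorbit theory: I would show that the voice transform $V_g$ identifies $\CoY$ isometrically with a closed, convolution-invariant subspace of the Wiener-Amalgam space $W(L^{\infty}(\cg),\,Y)$, and then exploit convolution relations on $\cg$ to strip away any dependence on the chosen atom or the chosen reservoir. First I would fix $g\in B(Y)$ (normalized so that $\|g|\mathcal{H}\|=1$) and record the reproducing formula $V_g f = V_g g\ast V_g f$, which holds on $(\mathcal{H}_w^1)^{\sim}$ precisely because $g\in A_{w_Y}$ forces $V_g g\in L^1_{w_Y}(\cg)$ and $\pi$ is unitary and irreducible. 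This shows that $V_g$ maps $\CoY$ into the closed subspace
$$M_g\ev\{F\in W(L^{\infty}(\cg),\,Y):\ F=V_g g\ast F\}.$$
The same formula furnishes the inverse via the weak integral $f=\int_{\cg}F(x)\,\pi(x)g\,d\mu(x)$, read in $(\mathcal{H}_w^1)^{\sim}$; since irreducibility of $\pi$ makes $V_g$ injective on the reservoir, $V_g$ is a topological isomorphism of $\CoY$ onto $M_g$, with $\|f|\CoY\|=\|V_g f|W(L^{\infty}(\cg),\,Y)\|$ holding by definition of the norm.

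For part (i), completeness is then immediate: $W(L^{\infty}(\cg),\,Y)$ is a (quasi-)Banach space carrying an equivalent $p$-norm with the same $p$ as $Y$ (as recalled in Subsection \ref{FSgroup}), the map $F\mapsto V_g g\ast F$ is continuous on it, so $M_g$ is closed, and $V_g$ is an isometry onto $M_g$; hence $\CoY$ is complete. For independence of the atom, I would pass between $g_1,g_2\in B(Y)$ through the cross-reproducing formulas $V_{g_2}f = c\,V_{g_2}g_1\ast V_{g_1}f$ (with a nonzero normalizing constant $c$). The decisive point is that every transition kernel $V_{g_i}g_j$ lies in $W(L^{\infty}(\cg),\,L^1_{w_Y}(\cg))$, because $g_1,g_2\in B(Y)=B^p_{w_Y}\cap A_{w_Y}$; by the Wiener-Amalgam convolution relation $W(L^{\infty}(\cg),\,L^1_{w_Y}(\cg))\ast W(L^{\infty}(\cg),\,Y)\st W(L^{\infty}(\cg),\,Y)$, convolution by such a kernel is bounded on $W(L^{\infty}(\cg),\,Y)$. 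This produces bounded maps $M_{g_1}\to M_{g_2}$ and back, so both the membership condition $V_g f\in W(L^{\infty}(\cg),\,Y)$ and the resulting (quasi-)norm are, up to equivalence, independent of $g\in B(Y)$.

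For part (ii), I would observe that $M_g$ is defined purely as a solution set of the convolution equation $F=V_g g\ast F$ on $\cg$ and therefore makes no reference whatsoever to the reservoir. The hypotheses on $S$—invariance under $\pi$ together with the reproducing formula $V_g f=V_g g\ast V_g f$ for all $f\in S^{\sim}$—allow the same inversion $F\mapsto\int_{\cg}F(x)\pi(x)g\,d\mu(x)$ to be performed inside $S^{\sim}$, so $V_g$ also identifies $\{f\in S^{\sim}:\ V_g f\in W(L^{\infty}(\cg),\,Y)\}$ isomorphically with $M_g$. Matching the two identifications through the single space $M_g$ gives the coincidence of the outer members of the asserted triple equality. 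The middle equality $\{f\in(\mathcal{H}_w^1)^{\sim}:\ V_g f\in Y\}=\{f:\ V_g f\in W(L^{\infty}(\cg),\,Y)\}$ follows because $W(L^{\infty}(\cg),\,Y)\st Y$ always holds by solidity, while on the reproducing subspace the reverse inclusion holds: convolving $F=V_g g\ast F\in Y$ with the kernel $V_g g\in W(L^{\infty}(\cg),\,L^1_{w_Y}(\cg))$ upgrades bare $Y$-membership to the local $L^{\infty}$-control defining $W(L^{\infty}(\cg),\,Y)$.

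The hard part will be the convolution relations for Wiener-Amalgam spaces in the genuinely quasi-Banach regime $p<1$, where Young's convolution inequality is no longer available. This is exactly where one must invoke the refined local-versus-global splitting encoded in the Wiener-Amalgam norm and the reinforced analyzing-vector condition $V_g g\in W(L^{\infty}(\cg),\,L^p_{w_Y}(\cg))$ built into $B^p_{w_Y}$; establishing the boundedness of convolution by these kernels, and the convergence of the inversion integral in the quasi-Banach topology, is the technical core on which the atom-independence of (i) and the reservoir-independence of (ii) both rest. Everything else—completeness, injectivity of $V_g$, and the purely set-theoretic matching through $M_g$—is then routine.
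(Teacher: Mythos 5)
The paper does not actually prove this theorem; it is quoted from Rauhut's coorbit theory for quasi-Banach spaces (Theorem 4.3 of the preprint version of \cite{ra05-3}), and your outline reproduces essentially the argument given there: identify $\CoY$ via $V_g$ with the reproducing-kernel subspace $M_g$ of $W(L^{\infty}(\cg),\,Y)$, get completeness from closedness of $M_g$, and obtain atom- and reservoir-independence from convolution by transition kernels. Two small caveats: in the body of your argument you use the relation $W(L^{\infty}(\cg),\,L^1_{w_Y}(\cg))\ast W(L^{\infty}(\cg),\,Y)\subset W(L^{\infty}(\cg),\,Y)$, which is the Banach-space version --- for $Y$ with only a $p$-norm, $p<1$, the kernel must lie in $W(L^{\infty}(\cg),\,L^p_{w_Y}(\cg))$ (you flag this at the end, but note that the cross-kernels $V_{g_1}g_2$ for two different atoms must separately be shown to lie in this smaller space, since the definition of $B^p_{w_Y}$ only controls the diagonal transforms $V_gg$); and the normalization needed for the exact reproducing formula is the admissibility normalization coming from the orthogonality relations, not $\|g|\mathcal{H}\|=1$.
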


\subsection{Discretizations}
\label{groupdisc}

This subsection states the main abstract discretization result. We are interested
in atoms of type $\{\pi(x_i)g\}_{i\in I}$, where $\{x_i\}_{i\in I} \subset
\mathcal{G}$ represents a discrete subset, whereas $g \in B(Y)$ denotes a
fixed admissible analyzing vector.

The following powerful result goes back to Gr\"ochenig \cite{Gr88}
and was further extended by Rauhut \cite{ra05-3}, Rauhut and Ullrich
\cite[Theorem 3.14]{RaUl10}, and recently by Sch\"afer \cite[Theorem 6.8]{sch}
to the quasi-Banach situation.

\begin{theorem}\label{wbases} Suppose that the functions $g_r, \gamma_r \in
B(Y)$, $r\in\{1,...,n\}$. Let $X \ev \{x_i\}_{i\in I}$ be a well-spread set such
that
\begin{equation}\label{waveletexp}
    f = \sum\limits_{r=1}^n \sum\limits_{i\in I} \langle
    \pi(x_i)\gamma_r, f\rangle \pi(x_i)g_r
\end{equation}
for all $f\in \mathcal{H}$\,. Then the expansion \eqref{waveletexp} extends to all
$f\in \CoY$. Moreover, $f\in (\mathcal{H}_w^1)^{\sim}$ belongs to $\CoY$ if and
only if $\{\langle \pi(x_i)\gamma_r,f\rangle\}_{i\in I}$ belongs to
$Y^{\sharp}$ for each $r\in\{1,...,n\}$\,. For $f \in \CoY$, the
expansion \eqref{waveletexp} always converges unconditionally in the
weak $\ast$-topology induced by $(\mathcal{H}_w^1)^{\sim}$. If, in addition,
the finite sequences are dense in $Y^{\sharp}$, then \eqref{waveletexp}
converges unconditionally in the (quasi-)norm of $\CoY$.
\end{theorem}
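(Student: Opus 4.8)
The plan is to establish the theorem through the analysis--synthesis scheme that is standard in coorbit space theory, carried out entirely with the $p$-norm supplied by the Aoki-Rolewicz theorem so that the quasi-Banach case is covered. Introduce the \emph{analysis operator} $C$, which sends $f$ to the coefficient array $\{\la\pi(x_i)\gamma_r,f\ra\}_{i\in I,\,r}=\{V_{\gamma_r}f(x_i)\}_{i,r}$, and the \emph{synthesis operator} $D$, which sends an array $\{\lambda_{i,r}\}$ to $\sum_{r=1}^n\sum_{i\in I}\lambda_{i,r}\,\pi(x_i)g_r$. The hypothesis \eqref{waveletexp} is exactly the assertion $D\circ C=\mathrm{Id}$ on $\ch$, and the entire theorem reduces to four points: $C$ maps $\CoY$ boundedly into $(Y^\sharp)^n$; $D$ maps $(Y^\sharp)^n$ boundedly into $\CoY$; the identity $D\circ C=\mathrm{Id}$ persists on all of $\CoY$; and the convergence assertions follow from the boundedness of $D$ applied to finite, respectively arbitrary, arrays.

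First I would prove boundedness of $C$. Since $\gamma_r\in B(Y)$, the independence of the coorbit space from the analyzing vector, Theorem \ref{indep}(i), yields $\|V_{\gamma_r}f|W(L^\infty(\cg),Y)\|\ls\|f|\CoY\|$. It then suffices to see that restricting a function in $W(L^\infty(\cg),Y)$ to the well-spread set $X$ produces a member of $Y^\sharp$: for $e\in Q$ one has $|V_{\gamma_r}f(x_i)|\le K(V_{\gamma_r}f,Q,L^\infty)(x_i)$, so that relative separatedness (bounded overlap of the $x_iV$) together with the solidity of $Y$ dominates $\sum_i|V_{\gamma_r}f(x_i)|\chi_{x_iV}$ by a fixed multiple of a local supremum of $K(V_{\gamma_r}f,Q,L^\infty)$, whence $\|\{V_{\gamma_r}f(x_i)\}_i|Y^\sharp\|\ls\|V_{\gamma_r}f|W(L^\infty(\cg),Y)\|$. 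Chaining the two estimates gives $\|Cf|(Y^\sharp)^n\|\ls\|f|\CoY\|$.

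The main obstacle is the boundedness of the synthesis operator $D$ on the quasi-Banach space $Y^\sharp$. For a fixed $g\in B(Y)$ one computes, using unitarity of $\pi$, $V_g(\pi(x_i)g_r)(x)=\la\pi(x)g,\pi(x_i)g_r\ra=V_gg_r(x_i^{-1}x)=(L_{x_i}V_gg_r)(x)$, so that $V_g(D\lambda)=\sum_{r,i}\lambda_{i,r}\,L_{x_i}V_gg_r$ is a superposition of left translates of the kernels $V_gg_r$, which lie in $W(L^\infty(\cg),L^p_{w_Y}(\cg))$ because $g,g_r\in B(Y)\subset B^p_{w_Y}$. Here the triangle inequality is unavailable, so I would use the $p$-norm of $Y$ (hence of $W(L^\infty(\cg),Y)$ and of $Y^\sharp$) to bound $\|V_g(D\lambda)|W(L^\infty(\cg),Y)\|^p$ by a constant times $\sum_{r,i}|\lambda_{i,r}|^p$ weighted by the translated-kernel norms, the bounded overlap from relative separatedness and the solidity of $Y$ then collapsing this into a constant multiple of $\|\lambda|(Y^\sharp)^n\|^p$. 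The role of the weight $w_Y$ of \eqref{wy} is precisely to make the left translations $L_{x_i}$ act with controlled norms on $W(L^\infty(\cg),Y)$, so that $g_r\in B^p_{w_Y}$ delivers this superposition estimate uniformly in $i$; this quasi-Banach superposition bound is the crux of the whole argument.

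Finally I would upgrade $D\circ C=\mathrm{Id}$ from $\ch$ to $\CoY$ and read off the remaining assertions. Testing against $h\in\ch^1_w$ and invoking the reproducing formula $V_gf=V_gg\ast V_gf$ on the reservoir $(\ch_w^1)^\sim$, the discretized expansion matches the continuous correspondence, so both sides of \eqref{waveletexp} coincide as elements of $(\ch_w^1)^\sim$. The characterization is then immediate: if $f\in\CoY$ then $Cf\in(Y^\sharp)^n$ by the analysis bound, while if $f\in(\ch_w^1)^\sim$ has $Cf\in(Y^\sharp)^n$ then $DCf\in\CoY$ by the synthesis bound and $V_g(DCf)=V_gf$ by the reproducing identity, forcing $f=DCf\in\CoY$. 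Unconditional convergence in the weak $\ast$-topology follows from the uniform boundedness of the finite partial-sum synthesis operators and the weak $\ast$-density of finitely supported arrays; when finite sequences are dense in $Y^\sharp$, the synthesis bound transfers this to unconditional convergence in the quasi-norm of $\CoY$. Throughout, the two delicate features are the systematic substitution of the triangle inequality by the $p$-norm and the consistent identification of the reservoir pairing, both of which are exactly what the quasi-Banach refinement of \cite{ra05-3,sch} is designed to handle.
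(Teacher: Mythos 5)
First, a point of reference: the paper does not prove this theorem at all. It is imported verbatim from the literature (Gr\"ochenig's discretization machinery \cite{Gr88} as extended to the quasi-Banach setting by Rauhut \cite{ra05-3}, Rauhut--Ullrich \cite{RaUl10} and Sch\"afer \cite{sch}), so there is no in-paper argument to compare yours with. Your analysis--synthesis outline is indeed the route those references take, and most of your steps are correct in substance: the restriction of a function in $W(L^{\infty}(\cg),Y)$ to a relatively separated set lands in $Y^{\sharp}$; the identity $V_g(\pi(x_i)g_r)=L_{x_i}V_gg_r$ holds; $D\circ C=\mathrm{Id}$ is upgraded from $\ch$ to $\CoY$ via the reproducing formula and the choice of reservoir; and norm convergence follows from density of finite sequences once synthesis is bounded.

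The one step that does not survive as written is the synthesis bound, which you yourself identify as the crux. You propose to estimate $\|V_g(D\lambda)\,|\,W(L^{\infty}(\cg),Y)\|^p$ by ``a constant times $\sum_{r,i}|\lambda_{i,r}|^p$ weighted by the translated-kernel norms'' and then to ``collapse'' this into $\|\lambda|(Y^{\sharp})^n\|^p$. That collapse is not available: $\|L_{x_i}\|_{W(L^{\infty}(\cg),Y)\to W(L^{\infty}(\cg),Y)}$ grows like $w_Y(x_i)$, which is unbounded on $\cg$, and for a generic solid $Y$ the quantity $\sum_i|\lambda_i|^p\|L_{x_i}\|^p$ is in no way comparable to $\|\sum_i|\lambda_i|\chi_{x_iV}\,|\,Y\|^p$ (take $Y=L^q(\cg)$ with $q>p$ to see the mismatch already for unweighted spaces). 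What the cited proofs actually use is a pointwise, convolution-type domination in the Wiener--Amalgam scale: one bounds $K(\sum_i\lambda_iL_{x_i}G,Q,L^{\infty})\le\sum_i|\lambda_i|L_{x_i}K(G,QV,L^{\infty})$, expands $K(G,QV,L^{\infty})$ against a partition subordinate to a second well-spread family, and then invokes the $p$-triangle inequality together with the \emph{right} translation invariance of $Y$ (this is where the terms $\|R_x\|$ and $\Delta(x^{-1})\|R_{x^{-1}}\|$ in \eqref{wy} enter, not the left ones) to arrive at $\|\lambda|Y^{\sharp}\|\cdot\|G|W(L^{\infty}(\cg),L^p_{w_Y}(\cg))\|$. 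Without this amalgam convolution estimate the synthesis operator is not controlled, and the equivalence between $f\in\CoY$ and $Cf\in(Y^{\sharp})^n$, as well as both convergence assertions, remain unproved. So the skeleton is the standard one, but the decisive estimate is asserted rather than established, and the mechanism you describe for it would fail.
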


\section{Coorbit Characterizations}\label{ax+b}

In this section, we always let
$\mathcal{G} := \mathbb{R}^n \rtimes \mathbb{R}_+^*$ be the
$n$-dimensional $ax+b$-group.
In Subsection \ref{s5.1}, we first introduce four classes of Peetre-type
spaces, $\dlt$, $\dpt$, $\dlh$ and $\dph$, on $\cg$,
and show that the left and right translations of $\cg$
are bounded on these Peetre-type spaces in Propositions \ref{p5.1} and \ref{p5.2}.
Combining these boundednesses of translations with the coorbit theory of Rauhut
\cite{ra05-3} (see also Theorem \ref{indep}),
in Theorems \ref{t5.1} and \ref{t5.2} of Subsection \ref{s5.2},
we then prove that the spaces $\dbt$, $\dft$, $\dbh$
and $\dfh$ are, respectively, the coorbit spaces of the Peetre-type spaces
$\dot{L}^{s+n/2-n/q,\tau}_{p,q,a}(\cg)$, $\dot{P}^{s+n/2-n/q,\tau}_{p,q,a}(\cg)$,
$L\dot{H}^{s+n/2-n/q,\tau}_{p,q,a}(\cg)$ and
$P\dot{H}^{s+n/2-n/q,\tau}_{p,q,a}(\cg)$.
Finally, in Subsection \ref{sequhom},
we introduce some sequence spaces corresponding to
$\dlt$, $\dpt$, $\dlh$ and $\dph$, which are used in Section \ref{s6}
to obtain the wavelet characterizations of the spaces
$\dbt$, $\dft$, $\dbh$
and $\dfh$.

\subsection{Peetre-Type Spaces on $\mathcal{G}$\label{s5.1}}

Recall that the \emph{group operation} of $\cg$ is given by
$$  (x,t)(y,s) \ev (x+ty,st)\quad x,\ y \in \rn, \, s,\ t\in(0,\fz).$$
The \emph{left Haar measure} $\mu$ on $\mathcal{G}$ is given by
$d\mu(x,t) \ev dx\,dt/t^{n+1}$ and the \emph{Haar module} is $\Delta(x,t) \ev t^{-n}$.
Giving a function $F$ on $\mathcal{G}$, for any ${\bf y}\ev(y,r)\in\cg,$
the \emph{left and right translations}
$L_{\bf y} \ev L_{(y,r)}$ and $R_{\bf y} \ev R_{(y,r)}$ are given by
setting, for all $(x,t)\in\cg$,
$$L_{(y,r)}F(x,t) = F((y,r)^{-1}(x,t)) = F\lf(\frac{x-y}{r},\frac{t}{r}\r)$$
and
$R_{(y,r)}F(x,t) = F((x,t)(y,r)) = F(x+ty,rt)\,.$

\begin{definition}\label{d5.1}
Let $s\in\rr$, $\tau\in[0,\fz)$ and $q\in(0,\,\fz]$.

{\rm (i)} The {\it  space $\dlt$} with $p\in(0,\fz]$ is defined to be the
set of all functions $F$ on $\mathcal{G}$ such that
\begin{eqnarray*}
&&\|F|\dlt\|\\
&&\hs\ev\sup_{P\in\cq}\frac1{|P|^\tau}
\lf\{\int_0^{\ell(P)}t^{-sq}\lf[\int_P\lf(
\sup_{\substack{y\in\rn\\t/2\le r \le
 t}}\frac{|F(x+y,r)|}{(1+|y|/r)^a}
\r)^p\,dx\r]^{q/p}\,\frac{dt}{t^{n+1}}\r\}^{1/q}<\fz.\noz
\end{eqnarray*}

{\rm (ii)} The {\it  space $\dpt$} with $p\in (0,\fz)$ is defined to be the
set of all functions $F$ on $\mathcal{G}$ such that
\begin{eqnarray*}
&&\|F|\dpt\|\\
&&\hs\ev\sup_{P\in\cq}\frac1{|P|^\tau}
\lf\{\int_P\lf[\int_0^{\ell(P)}t^{-sq}\lf(
\sup_{\substack{y\in\rn\\t/2\le r \le
 t}}\frac{|F(x+y,r)|}{(1+|y|/r)^a}
\r)^q\,\frac{dt}{t^{n+1}}\r]^{p/q}\,dx\r\}^{1/p}<\fz.\noz
\end{eqnarray*}
\end{definition}

If we change dyadic cubes $\mathcal{Q}$ in Definition \ref{d5.2}
into the set of all cubes in $\rn$ whose sides parallel
to the coordinate axes, we obtain equivalent (quasi-)norms.

Next we show that the left
and right translations are bounded on $\dlt$ and $\dpt$.
In what follows, for a given (quasi)-Banach space $X$ and a
linear operator $T$ from $X$ to $X$, we denote by $\|T\|_{X\to X}$
the \emph{operator norm} of $T$.

\begin{proposition}\label{p5.1}
Let $s\in\rr$, $\tau\in[0,\fz)$ and $q\in(0,\,\fz]$. Then the left
and right translations are bounded on $\dlt$ and $\dpt$.
Moreover, the following estimates hold:
\begin{equation}\label{5.1}
\|R(z,r)\|_{\dlt\to\dlt}\le
Cr^{s+n/q}(1\vee r^{-a})(1\vee r^{n\tau})(1+|z|)^a,
\end{equation}
\begin{equation}\label{5.2}
\|R(z,r)\|_{\dpt\to\dpt}\le
Cr^{s+n/q}(1\vee r^{-a})(1\vee r^{n\tau})(1+|z|)^a,
\end{equation}
\begin{equation}\label{5.3}
\|L(z,r)\|_{\dlt\to\dlt}\le Cr^{n(1/p-1/q)-s-n\tau}
\end{equation}
and
\begin{equation}\label{5.4}
\|L(z,r)\|_{\dpt\to\dpt}\le Cr^{n(1/p-1/q)-s-n\tau}.
\end{equation}
\end{proposition}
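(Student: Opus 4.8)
The plan is to reduce both assertions to the behaviour, under the group translations, of the auxiliary Peetre-type maximal function
$$G_F(x,t):=\sup_{\substack{y\in\rn\\ t/2\le u\le t}}\frac{|F(x+y,u)|}{(1+|y|/u)^a},\qquad (x,t)\in\cg,$$
in terms of which
$$\|F|\dlt\|=\sup_{P\in\cq}\frac1{|P|^\tau}\lf\{\int_0^{\ell(P)}t^{-sq}\lf[\int_P G_F(x,t)^p\,dx\r]^{q/p}\,\frac{dt}{t^{n+1}}\r\}^{1/q},$$
while $\|F|\dpt\|$ is the same expression with the $x$- and $t$-integrations interchanged. First I would record two pointwise transformation estimates for $G_F$. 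Writing $R(z,r)F(x,t)=F(x+tz,rt)$ and substituting $y\mapsto y-uz$ inside the supremum, the elementary bound $1+|y-uz|/u\ge(1+|y|/u)/(1+|z|)$, followed by the inner rescaling $u\mapsto ru$, gives
$$G_{R(z,r)F}(x,t)\le(1+|z|)^a(1\vee r^{-a})\,G_F(x,rt),$$
where the factor $1\vee r^{-a}$ arises from comparing $1+r|y|/u$ with $1+|y|/u$ according as $r\ge1$ or $r<1$. For the left translation the analogous computation is exact and produces the clean identity $G_{L(z,r)F}(x,t)=G_F\lf(\frac{x-z}{r},\frac{t}{r}\r)$.

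With these in hand, for $R(z,r)$ I would insert the pointwise bound into the $\dlt$ quasi-norm and substitute $\sigma=rt$. Since $t^{-sq}\,dt/t^{n+1}=r^{sq+n}\sigma^{-sq}\,d\sigma/\sigma^{n+1}$, the $q$-th power of the inner bracket becomes $r^{sq+n}\int_0^{r\ell(P)}\sigma^{-sq}[\int_P G_F(x,\sigma)^p\,dx]^{q/p}\,d\sigma/\sigma^{n+1}$, whose $1/q$-th power carries out the prefactor $r^{s+n/q}$ (together with $(1+|z|)^a(1\vee r^{-a})$). The integral now runs over the enlarged interval $(0,r\ell(P))$ while the spatial cube is still $P$. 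If $r\le1$ it is dominated by the same integral over $(0,\ell(P))$, hence by $(|P|^\tau\|F|\dlt\|)^q$. If $r>1$ I would pass to the \emph{dyadic ancestor} $Q\supset P$ with $\ell(Q)=2^{\lceil\log_2 r\rceil}\ell(P)\ge r\ell(P)$; as $G_F\ge0$ and $P\subset Q$, the integral is then at most $(|Q|^\tau\|F|\dlt\|)^q$, and $|Q|/|P|\le(2r)^n$ supplies, after division by $|P|^\tau$, the factor $1\vee r^{n\tau}$. Collecting the powers of $r$ yields \eqref{5.1}, and \eqref{5.2} is proved identically with the order of integration interchanged.

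For $L(z,r)$ I would use the exact identity above and substitute $x'=(x-z)/r$ and $\sigma=t/r$ in the $\dlt$ quasi-norm. This sends $P$ to the cube $P':=(P-z)/r$ of side length $\ell(P)/r$, sends $(0,\ell(P))$ onto $(0,\ell(P'))$, and, after collecting the Jacobians from $t^{-sq}$, $dx$ and $dt/t^{n+1}$, factors out exactly $r^{n(1/p-1/q)-s}$ times $\int_0^{\ell(P')}\sigma^{-sq}[\int_{P'}G_F(x',\sigma)^p\,dx']^{q/p}\,d\sigma/\sigma^{n+1}$. The one subtlety is that $P'$ is in general not dyadic; here I would invoke the equivalence noted in the remark following Definition \ref{d5.1}, between the (quasi-)norm built on $\cq$ and the one built on all cubes with sides parallel to the axes, which bounds this last integral by $(|P'|^\tau\|F|\dlt\|)^q$. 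Since $|P'|^\tau/|P|^\tau=r^{-n\tau}$, dividing by $|P|^\tau$ supplies the remaining $r^{-n\tau}$ and gives \eqref{5.3}; \eqref{5.4} follows in the same manner.

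The crux is not any single inequality but the simultaneous bookkeeping of the three competing scalings — the smoothness weight $t^{-sq}$, the measure $dx\,dt/t^{n+1}$, and the normalization $|P|^{-\tau}$ — in the presence of translations that do not preserve the dyadic grid. This is exactly what forces the two structurally different remedies above: for $R(z,r)$ the time interval is dilated, so one must enlarge $P$ to a dyadic ancestor, and it is here that $1\vee r^{n\tau}$ genuinely appears; for $L(z,r)$ the relevant cube is rescaled and shifted off the grid, so one must pass to the equivalent cube-wise norm. The cases $p=\fz$ or $q=\fz$ require only the usual modifications of the inner $L^p$- or $L^q$-norm and leave the scaling exponents unchanged.
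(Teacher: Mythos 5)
Your proposal is correct and follows essentially the same route as the paper's proof: the pointwise estimate $G_{R(z,r)F}(x,t)\le(1+|z|)^a(1\vee r^{-a})G_F(x,rt)$ is exactly the paper's \eqref{5.5}, the exact identity for $L(z,r)$ plus the change of variables and the remark on replacing dyadic cubes by arbitrary axis-parallel cubes is precisely how the paper handles \eqref{5.3} and \eqref{5.4}. Your dyadic-ancestor argument for the factor $1\vee r^{n\tau}$ when $r>1$ is a correct filling-in of a step the paper leaves implicit behind a single $\lesssim$.
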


\begin{proof}
By similarity, we only give the proofs for \eqref{5.1} and \eqref{5.4}.
For \eqref{5.1}, notice that
\begin{equation}\label{5.5}
\sup_{\gfz{y\in\rn}{t/2\le\theta\le t}}
\frac{|F(x+y+\theta z,\theta r)|}{(1+|y|/\theta)^a}
\le
(1\vee r^{-a})(1+|z|)^a
\sup_{\gfz{y\in\rn}{tr/2\le\theta\le tr}}\frac{|F(x+y,\theta)|}{(1+|y|/\theta)^a}.
\end{equation}
By \eqref{5.5} and changing variables, we then see that
\begin{eqnarray*}
&&\|R(z,r)F\|_{\dlt}\\
&&\hs\le
(1\vee r^{-a})(1+|z|)^a\\
&&\hs\hs\times\sup_{P\in\cq}\frac1{|P|^\tau}
\lf\{\int_0^{\ell(P)}t^{-sq}\lf[\int_P\lf(
\sup_{\substack{y\in\rn\\tr/2\le \theta \le
tr}}\frac{|F(x+y,\theta)|}{(1+|y|/\theta)^a}
\r)^p\,dx\r]^{q/p}\,\frac{dt}{t^{n+1}}\r\}^{1/q}\\
&&\hs\ls
r^{s+n/q}(1\vee r^{-a})(1\vee r^{n\tau})(1+|z|)^a\|F|\dlt\|,
\end{eqnarray*}
which implies \eqref{5.1}.

Next we prove \eqref{5.4}. Notice that
\begin{eqnarray*}
&&\|L_{(z,r)}F|\dpt\|\\
&&\hs=\sup_{P\in\cq}\frac1{|P|^\tau}
\lf\{\int_{P-z}\lf[\int_0^{\ell(P)}t^{-sq}\lf(
\sup_{\gfz{y\in\rn}{t/2\le \theta\le t}}
\frac{|F(\frac{x+y}r,\,\frac \theta r)|}{(1+\frac{|y|}\theta)^a}
\r)^q\,\frac{dt}{t^{n+1}}\r]^{p/q}\,dx\r\}^{1/p},
\end{eqnarray*}
where and in what follows, $P-z\ev\{y-z:\ y\in P\}$.
By changing variables, we know that
\begin{eqnarray*}
&&\|L_{(z,r)}F|\dpt\|\\
&&\hs=\sup_{P\in\cq}\frac{r^{n(1/p-1/q)-s}}{|P|^\tau}
\lf\{\int_{\frac{P-z}r}\lf[\int_0^{\ell(P)/r}t^{-sq}\lf(
\sup_{\gfz{y\in\rn}{t/2\le \theta\le t}}
\frac{|F({x+y},\,\theta)|}{(1+\frac{|y|}\theta)^a}
\r)^q\,\frac{dt}{t^{n+1}}\r]^{p/q}\,dx\r\}^{1/p}\\
&&\hs=r^{n(1/p-1/q)-s-n\tau}\sup_{P\in\cq}\frac{1}{|\frac{P-z}{r}|^\tau}
\\
&&\hs\hs\times \lf\{\int_{\frac{P-z}r}
\lf[\int_0^{\ell(\frac{P-z}{r})}t^{-sq}\lf(
\sup_{\gfz{y\in\rn}{t/2\le \theta\le t}}
\frac{|F({x+y},\,\theta)|}{(1+\frac{|y|}\theta)^a}
\r)^q\,\frac{dt}{t^{n+1}}\r]^{p/q}\,dx\r\}^{1/p}\\
&&\hs\ls r^{n(1/p-1/q)-s-n\tau}\|F|\dpt\|,
\end{eqnarray*}
where $\frac{P-z}r\ev\{\frac{y-z}r:\ y\in P\}$.
Thus, $\|L(z,r)\|_{\dpt\to\dpt}\ls r^{n(1/p-1/q)-s-n\tau},$
which completes the proof of Proposition \ref{p5.1}.
\end{proof}

\begin{definition}\label{d5.2}
Let $s\in\rr$, $p\in(1,\,\fz)$ and $\oz$ be a
nonnegative function satisfying \eqref{2.2}.

{\em{\rm (i)}} The {\it space $\dlh$} with $q\in[1,\fz)$
and $\tau\in[0, 1/{(p\vee q)'}]$ is defined to be the
set of all functions $F$ on $\mathcal{G}$ such that
\begin{eqnarray*}
&&\|F|\dlh\|\\
&&\hs\ev\inf_{\oz}\lf\{ \int_0^\fz t^{-sq}\lf[\int_\rn\lf(
\sup_{\substack{y\in\rn\\t/2\le r \le
 t}}\frac{|F(x+y,r)|}{(1+|y|/r)^a}\r)^p[\oz(x,t)]^{-p}
\,dx\r]^{q/p}\,\frac{dt}{t^{n+1}}\r\}^{1/q}<\fz,\noz
\end{eqnarray*}
where the infimum is taken over all
nonnegative Borel measurable functions $\omega$ on $\R_+^{n+1}$ satisfying \eqref{2.2}.

{\rm (ii)} The {\it space $\dph$} with $q\in(1,\fz)$ and
$\tau\in[0, 1/{(p\vee q)'}]$ is defined to be the
set of all functions $F$ on $\mathcal{G}$ such that
\begin{eqnarray*}
&&\|F|\dph\|\\
&&\hs\ev\inf_{\oz}\lf\{ \int_\rn \lf[\int_0^\fz t^{-sq}\lf(
\sup_{\substack{y\in\rn\\t/2\le r \le
 t}}\frac{|F(x+y,r)|}{(1+|y|/r)^a}\r)^q[\oz(x,t)]^{-q}
\frac{dt}{t^{n+1}}\r]^{p/q}\,\,dx\r\}^{1/p}<\fz,\noz
\end{eqnarray*}
where the infimum is taken over all
nonnegative Borel measurable functions $\omega$ on $\R_+^{n+1}$ satisfying \eqref{2.2}.
\end{definition}

Similar to Remark \ref{r3.2},
we have the following conclusions.

\begin{remark}\label{r5.1}\rm
{\rm (i)} The spaces $\dlh$ and $\dph$ are
quasi-Banach spaces. Indeed, for any $F_1,\,F_2\in \dlh$,
$$\|F_1+F_2\|_{\dlh}\le 2^{(p\vee q)'}\lf[\|F_1\|_{\dlh}+\|F_2\|_{\dlh}\r].$$
An inequality similar to this is also true for $\dph$.

{\rm (ii)} By the Aoki-Rolewicz theorem, we know that when
$v:=1/{(1+(p\vee q)')}$, then
$$\lf\|\sum_{j\in\zz} F_j\r\|^v_{\dlh}
\ls \sum_{j\in\zz}\lf\|F_j\r\|^v_{\dlh}\quad \mathrm{for\ all}\
\{F_j\}_{j\in\zz}\subset \dlh,$$
and
$$\lf\|\sum_{j\in\zz} F_j\r\|^v_{\dph}
\ls \sum_{j\in\zz}\lf\|F_j\r\|^v_{\dph}\quad \mathrm{for\ all}\
\{F_j\}_{j\in\zz}\subset \dph.$$
\end{remark}

Next we show that the left
and the right translations are bounded on $\dlh$ and $\dph$.
To this end, we need the following dilation property and the
translation invariance property of Hausdorff capacities.

\begin{lemma}\label{l5.1}
Let $d\in(0,n]$. Then, for all $\theta \in(0,\fz)$ and $E\st\rn$,
$H^d(\theta E)=\theta^dH^d(E)$, where $\theta E\ev\{\theta x:\ x\in E\}$.
\end{lemma}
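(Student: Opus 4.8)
The plan is to argue directly from the definition \eqref{2.1} of $H^d$ as an infimum over countable coverings by open balls, exploiting the elementary fact that the dilation $x\mapsto\theta x$ carries balls to balls. The whole lemma is a scaling computation, so the only thing to keep track of is how radii transform.

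First I would record the pointwise identity $\theta B(x,r)=B(\theta x,\theta r)$ for all $x\in\rn$ and $r>0$, where $\theta B(x,r):=\{\theta y:\ y\in B(x,r)\}$. This is immediate from the equivalence $|y-x|<r\iff|\theta y-\theta x|<\theta r$, which holds because $\theta\in(0,\fz)$. In particular the dilation is a bijection of $\rn$ mapping open balls to open balls and scaling each radius by exactly $\theta$.

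Next, starting from any countable covering $\{B(x_j,r_j)\}_j$ of $E$, I would observe that $\{B(\theta x_j,\theta r_j)\}_j=\{\theta B(x_j,r_j)\}_j$ covers $\theta E$, since the dilation respects unions and inclusions: $E\subset\bigcup_jB(x_j,r_j)$ forces $\theta E\subset\bigcup_j B(\theta x_j,\theta r_j)$. Because $\sum_j(\theta r_j)^d=\theta^d\sum_jr_j^d$, taking the infimum over all such coverings of $E$ yields $H^d(\theta E)\le\theta^d H^d(E)$. The reverse inequality then follows by symmetry: applying what has just been proved with $\theta E$ in place of $E$ and $1/\theta$ in place of $\theta$ gives $H^d(E)=H^d(\theta^{-1}(\theta E))\le\theta^{-d}H^d(\theta E)$, that is, $\theta^dH^d(E)\le H^d(\theta E)$. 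Combining the two inequalities proves the claim.

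I do not anticipate any genuine obstacle here, and indeed there is no hard step: the restriction to countable coverings by open balls is preserved verbatim under dilation, so no regularity of $E$ is needed, and the hypothesis $d\in(0,n]$ (excluding $d=0$) plays no special role beyond guaranteeing the exponent $\theta^d$ is well behaved. The one point deserving a moment's care is simply verifying that the dilation sets up a bijection between coverings of $E$ and coverings of $\theta E$ under which every radius scales by exactly $\theta$, which is exactly the content of the identity $\theta B(x,r)=B(\theta x,\theta r)$ established at the outset.
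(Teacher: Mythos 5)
Your proposal is correct and follows essentially the same route as the paper: dilate a covering of $E$ by open balls to get a covering of $\theta E$ with radii scaled by $\theta$, deduce $H^d(\theta E)\le\theta^d H^d(E)$, and obtain the reverse inequality by applying this with $\theta^{-1}$ and $\theta E$. The only cosmetic difference is that the paper works with an $\varepsilon$-almost-optimal covering rather than taking the infimum over all coverings directly.
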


\begin{proof}
For all $\ez>0$, there exist balls $\{B_j\}_{j\in\nn}$ such that
$E\st\cup_{j}B_j$ and
$$\sum_jr_{B_j}^d-\ez\le H^d(E)\le\sum_jr_{B_j}^d.$$
Then $\theta E\st\cup_{j}(\theta B_j)$ and $\theta B_j$ is a ball. Thus,
$$H^d(\theta E)\le\sum_j(\theta r_{B_j})^d
=\theta^d\sum_jr_{B_j}^d\le \theta^dH^d(E)+\theta^d\ez,$$
which implies that $H^d(\theta E)\le \theta^dH^d(E)$.

Replacing $\theta$ and $E$, respectively, by $\theta^{-1}$ and
$\theta E$ in the above argument,
we immediately obtain the converse inequality, which completes the proof
of Lemma \ref{l5.1}.
\end{proof}

We also have the following observation. Since the proof is trivial, we
omit the details.

\begin{lemma}\label{l5.2}
For all $d\in(0,n]$ and $z\in\rn$,
$H^d(E)=H^d(E+z),$
where $E+z\ev\{x+z:\ x\in E\}$.
\end{lemma}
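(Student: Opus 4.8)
The plan is to mimic the proof of Lemma \ref{l5.1}, exploiting the elementary fact that a translation maps balls to balls of the \emph{same} radius. Consequently, any admissible covering of $E$ transforms into an admissible covering of $E+z$ with identical radii, and hence with identical covering sum $\sum_j r_j^d$; the definition \eqref{2.1} of $H^d$ then forces equality.

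First I would fix $\ez\in(0,\fz)$ and, by \eqref{2.1}, choose a countable family of balls $\{B(x_j,r_j)\}_{j\in\nn}$ with $E\st\bigcup_j B(x_j,r_j)$ and $\sum_j r_j^d\le H^d(E)+\ez$. The key observation is that $B(x_j,r_j)+z=B(x_j+z,r_j)$, so that $\{B(x_j+z,r_j)\}_{j\in\nn}$ is a covering of $E+z$ whose radii are unchanged. Taking the infimum in \eqref{2.1} for the set $E+z$, I obtain $H^d(E+z)\le\sum_j r_j^d\le H^d(E)+\ez$, and letting $\ez\to0$ yields $H^d(E+z)\le H^d(E)$.

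Then I would deduce the converse inequality by symmetry, replacing $z$ by $-z$ and $E$ by $E+z$ in the inequality just established, which gives $H^d(E)=H^d((E+z)+(-z))\le H^d(E+z)$. Combining the two inequalities completes the proof. There is essentially no obstacle here: the only point requiring care is the standard $\ez$-argument ensuring that the infimum over coverings is respected, and this is handled exactly as in the proof of Lemma \ref{l5.1}; this is precisely why the authors regard the proof as trivial.
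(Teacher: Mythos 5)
Your argument is correct and is exactly the standard covering argument the authors have in mind when they declare the proof trivial and omit it; it also parallels their written proof of Lemma \ref{l5.1}, with the dilation $\theta B_j$ replaced by the translation $B(x_j+z,r_j)$, which even simplifies matters since the radii are unchanged. Nothing further is needed.
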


\begin{proposition}\label{p5.2}
Let $s,\,p,\,q,$ and $\tau$ be as in Definition \ref{d5.2}. Then the left
and right translations are bounded on $\dlh$ and $\dph$.
Moreover, the following estimates hold:
\begin{equation}\label{5.6}
\|R(z,r)\|_{\dlh\to\dlh}\le
Cr^{s+n/q}(1\vee r^{-a})(1\vee r^{-n\tau})(1+|z|)^a,
\end{equation}
\begin{equation}\label{5.7}
\|R(z,r)\|_{\dph\to\dph}\le
Cr^{s+n/q}(1\vee r^{-a})(1\vee r^{-n\tau})(1+|z|)^a,
\end{equation}
\begin{equation}\label{5.8}
\|L(z,r)\|_{\dlh\to\dlh}\le Cr^{n(1/p-1/q)-s+n\tau}
\end{equation}
and
\begin{equation}\label{5.9}
\|L(z,r)\|_{\dph\to\dph}\le Cr^{n(1/p-1/q)-s+n\tau}.
\end{equation}
\end{proposition}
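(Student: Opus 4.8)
The plan is to follow the proof of Proposition \ref{p5.1} almost verbatim, the only genuinely new feature being that one must keep track of how the auxiliary weight $\oz$ and the normalization condition \eqref{2.2} transform under the relevant changes of variables. By symmetry it suffices to establish \eqref{5.6} and \eqref{5.9}; the remaining estimates \eqref{5.7} and \eqref{5.8} follow from the same computations with the inner and outer exponents interchanged. Throughout I write $P:=(p\vee q)'$ and $d:=n\tau P$, so that \eqref{2.2} reads $\int_\rn[N\oz]^P\,dH^d\le1$, and I exploit the infimum structure of the (quasi-)norms: to bound a translate of $F$ from above, I fix a weight $\oz_0$ satisfying \eqref{2.2} which is almost optimal for $\|F|\dlh\|$ (respectively $\|F|\dph\|$), turn it into an admissible competitor for the translated function, and estimate the resulting expression.

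For the right translation \eqref{5.6} I first apply the pointwise estimate \eqref{5.5} exactly as in Proposition \ref{p5.1}; this leaves the spatial variable $x$, and hence the factor $\oz(x,t)$, untouched and only rescales the range of the dilation parameter. After rescaling this parameter by $r$, the weight appears as $\oz(x,t/r)$, so I choose the competitor $\oz(x,t):=\oz_0(x,rt)$, which makes the $F$-part collapse to the defining expression of $\|F|\dlh\|$ with weight $\oz_0$. The point is then to verify that $\oz$ is a controlled multiple of an admissible weight: a direct computation gives $N\oz(\cdot)=N_{1/r}\oz_0(\cdot)$, an aperture dilation of the nontangential maximal function. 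By Lemma \ref{l3.3} together with the layer-cake (Choquet) representation of the capacity integral, $\int_\rn[N\oz]^P\,dH^d\le C(1\vee r^{-d})\int_\rn[N\oz_0]^P\,dH^d\le C(1\vee r^{-d})$, so that $\oz/[C(1\vee r^{-d})]^{1/P}$ satisfies \eqref{2.2}. Since $d=n\tau P$, the renormalization contributes exactly the factor $(1\vee r^{-n\tau})$; combining it with the factors $(1\vee r^{-a})(1+|z|)^a$ from \eqref{5.5} and the Jacobian factor $r^{s+n/q}$ from the substitution yields \eqref{5.6}.

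For the left translation \eqref{5.9}, $L_{(z,r)}F(x,t)=F((x-z)/r,t/r)$ induces a genuine dilation and translation of the spatial variable together with a dilation of $t$; after substituting $x\mapsto rx+z$ and $t\mapsto rt$, the Peetre maximal function of $L_{(z,r)}F$ becomes that of $F$ at the point $(x-z)/r$ over the range $[t/(2r),t/r]$, which is again of the admissible form $[\widetilde t/2,\widetilde t]$. Choosing the competitor $\oz(x,t):=\oz_0((x-z)/r,t/r)$ makes the $F$-part reproduce $\|F|\dph\|$, and the crucial identity here is $N\oz(x)=(N\oz_0)((x-z)/r)$, which carries no aperture distortion because $x$ and $t$ are dilated by the same factor. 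Now Lemmas \ref{l5.1} and \ref{l5.2} give, for every level set $A_\lambda:=\{x\in\rn:\ N\oz_0(x)>\lambda\}$, that $H^d(z+rA_\lambda)=r^dH^d(A_\lambda)$, whence $\int_\rn[N\oz]^P\,dH^d=r^d\int_\rn[N\oz_0]^P\,dH^d\le r^d$; thus $\oz/r^{n\tau}$ satisfies \eqref{2.2} and contributes the factor $r^{n\tau}$. Together with the Jacobian factor $r^{n(1/p-1/q)-s}$ produced by the two substitutions this gives \eqref{5.9}.

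The main obstacle is precisely the verification that the transformed weights remain admissible for \eqref{2.2} with a controlled constant; this is where the geometric properties of the Hausdorff capacity (Lemmas \ref{l3.3}, \ref{l5.1} and \ref{l5.2}) are indispensable, replacing the elementary dilation of the factor $1/|P|^\tau$ used in Proposition \ref{p5.1}. Note in particular that the capacity normalization scales oppositely to $1/|P|^\tau$, which is exactly why the exponent of $n\tau$ in \eqref{5.6}--\eqref{5.9} carries the reverse sign compared with the corresponding exponent in \eqref{5.1}--\eqref{5.4}.
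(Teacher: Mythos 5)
Your proposal is correct and follows essentially the same route as the paper: reduce to \eqref{5.6} and \eqref{5.9} by similarity, apply \eqref{5.5} and the substitutions to extract the factor $r^{s+n/q}$ (resp.\ $r^{n(1/p-1/q)-s}$), and convert the near-optimal weight into an admissible competitor for the translate, using Lemma \ref{l3.3} for the aperture dilation $N_{1/r}\widetilde\oz$ in the right-translation case and Lemmas \ref{l5.1} and \ref{l5.2} for the affine change of variables in the left-translation case. The only cosmetic difference is that the paper builds the normalizing constants $(1\wedge r^{n\tau})$ and $r^{-n\tau}$ directly into the definition of the competitor weight, whereas you renormalize after computing the capacity integral; the resulting factors $(1\vee r^{-n\tau})$ and $r^{n\tau}$ are identical.
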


\begin{proof}
We also only prove \eqref{5.6} and \eqref{5.9} by similarity.
For \eqref{5.6}, let $\wz\oz$ be a nonnegative function satisfying \eqref{2.2} and
\begin{eqnarray*}
\lf\{ \int_0^\fz t^{-sq}\lf[\int_\rn\lf(
\sup_{\substack{y\in\rn\\t/2\le r \le
 t}}\frac{|F(x+y,r)|}{(1+|y|/r)^a}\r)^p[\wz\oz(x,t)]^{-p}
\,dx\r]^{q/p}\,\frac{dt}{t^{n+1}}\r\}^{1/q}\ls\|F|\dlh\|.
\end{eqnarray*}
Define $\omega(x,t):= (1\wedge r^{n\tau})\wz\omega(x,tr)$ for all
$(x,t)\in\rr_+^{n+1}$. Then
$$N\omega(x)
=\lf(1\wedge r^{n\tau}\r)\sup_{|x-y|<t/r}\wz\omega(y,t)$$
for all $x\in\rn$.
By Lemma \ref{l3.3},
we know that $\omega$ also satisfies \eqref{2.2}, which, together with \eqref{5.5},
further implies that
\begin{eqnarray*}
\|R(z,r)F|\dlh\|\ls r^{s+n/q}(1\vee r^{-a})(1\vee r^{-n\tau})(1+|z|)^a\|F|\dlh\|.
\end{eqnarray*}
From this, we deduce \eqref{5.6}.

We now prove \eqref{5.9}.
Let $\wz\oz$ be a nonnegative function satisfying \eqref{2.2} and
\begin{eqnarray*}
\lf\{\int_\rn\lf[\int_0^\fz t^{-sq}\lf(
\sup_{y\in\rn}\frac{|F(x+y,t)|}{(1+|y|/t)^a}\r)^q[\wz\oz(x,t)]^{-q}
\frac{dt}{t^{n+1}}\r]^{p/q}\,dx\r\}^{1/p}\ls\|F|\dph\|.
\end{eqnarray*}
Notice that
\begin{eqnarray*}
\|L_{(z,r)}F|\dph\|
&&=r^{-s+n(1/p-1/q)}\inf_\oz\lf\{\int_\rn\lf[\int_0^\fz t^{-sq}
\r.\r.\\
&&\hs\lf.\lf.\times
\lf(\sup_{\gfz{y\in\rn}{t/2\le\theta\le t}}
\frac{|F(x+y,\theta)|}{(1+|y|/\theta)^a}\r)^q[\oz(rx+z,rt)]^{-q}
\frac{dt}{t^{n+1}}\r]^{p/q}\,dx\r\}^{1/p}.
\end{eqnarray*}
Let $\oz(x,t)\ev r^{-n\tau}\wz\oz(\frac{x-z}r,\frac tr)$
for all $(x,t)\in\rr_+^{n+1}$. Then
$\oz(rx+z,rt)=r^{-n\tau}\wz\oz(x,t)$ and
\begin{eqnarray*}
N\oz(x)&&=\sup_{|y-x|<t}\oz(y,t)
=r^{-n\tau}\sup_{|y-x|<t}\wz\oz\lf(\frac{y-z}r,\frac tr\r)
=r^{-n\tau}N\wz\oz\lf(\frac{x-z}r\r).
\end{eqnarray*}
Thus, by Lemmas \ref{l5.1} and \ref{l5.2}, we conclude that
\begin{eqnarray*}
\int_{\R^n} [N\omega(x)]^{(p \vee q)'} \,dH^{n\tau(p\vee q)'}(x)
&&=r^{-n\tau(p\vee q)'}\int_{\R^n}
\lf[N\wz\omega\lf(\frac{x-z}r\r)\r]^{(p \vee q)'} \,dH^{n\tau(p\vee q)'}(x)\\
&&=r^{-n\tau(p\vee q)'}\int_{\R^n}
\lf[N\wz\omega\lf(x\r)\r]^{(p \vee q)'} \,dH^{n\tau(p\vee q)'}(rx+z)\\
&&\le \int_{\R^n} \lf[N\wz\omega\lf(x\r)\r]^{(p
\vee q)'} \,dH^{n\tau(p\vee q)'}(x)\le 1.
\end{eqnarray*}
Thus, $\oz$ satisfies \eqref{2.2}, which further implies that
\begin{eqnarray*}
\|L_{(z,r)}F|\dph\|\ls r^{-s+n(1/p-1/q)+n\tau}\|F\|_\dph.
\end{eqnarray*}
This finishes the proof of Proposition \ref{p5.2}.
\end{proof}

\subsection{Besov-Triebel-Lizorkin-Type Spaces as Coorbits}\label{s5.2}

We put $\mathcal{H}\ev L^2(\R^n)$ and use the
\emph{unitary representation} over $\cg$
$\pi(x,t):= T_x \mathcal{D}_t^{L^2}$ for $x \in \rn$ and $t\in(0,\fz),$
where $T_xf := f(\cdot-x)$ and $\mathcal{D}_t^{L^2}f = t^{-n/2}f(\cdot/t)$.
This representation is unitary, continuous and square integrable on
$\mathcal{H}$ but not irreducible. However, if we
restrict to radial functions $g\in L^2(\R^n)$, then $\mbox{span}
\{\pi(x,t)g~:(x,t)
\in \mathcal{G}\}$ is dense
in $L^2(\R^n)$. The voice transform in this particular
situation is given by the so-called \emph{continuous
wavelet transform} $W_g f$, defined by
\begin{equation}\label{wg}
    W_g f(x,t) := \langle T_x\mathcal{D}^{L^2}_t g, f\rangle,\quad x\in \rn,\
t\in(0,\fz),
\end{equation} where the bracket $\langle \cdot, \cdot \rangle$ denotes the
\emph{inner product} in $L^2(\R^n)$. We write $W_g f$ in terms of a convolution via
\begin{equation}\label{CWTconv}
    W_g f(x,t) = [(\mathcal{D}^{L^2}_t g(-\cdot)) \ast \bar{f}](x) =
t^{d/2}[(\mathcal{D}_t g(-\cdot)) \ast \bar{f}](x)
\end{equation}
for all $x\in \rn$ and $t\in(0,\fz)$.

The next definition helps to control the decay of the continuous wavelet
transform. Both, the definition and the decay result are taken from
\cite[Appendix A]{u10}.

\begin{definition}\label{basedef} Let $L+1\in \zz_+$ and $K\in(0,\fz)$. The
\emph{properties} $(D)$, $(M_L)$ and $(S_K)$ for
a function $f \in L^2(\R^n)$ are, respectively, defined as follows:
\begin{enumerate}
     \item[$(D)$] For every $N\in \N$, there exists a positive constant
    $C(N)$ such that
    $$|f(x)| \le \frac{C(N)}{(1+|x|)^N}\,.$$
    \item[$(M_L)$] All moments up to order $L$ vanish, namely,
    for all $\alpha\in \zz_+^n$ such that $\|\alpha\|_{\ell^1}\le L$\,,
    $$
        \int_{\rn}x^{\alpha}f(x)\,dx = 0.
    $$
    \item[$(S_K)$] The function
    $(1+|\xi|)^{K}|D^{\alpha}\widehat{f}(\xi)|$
    for all $\xi\in\rn$ belongs to $L^1(\R^n)$
    for every multi-index $\alpha\in \zz_+^n$.
\end{enumerate}
\end{definition}

Property $(S_K)$ is rather technical. Suppose that we have a function $f
\in C^{K+n+1}(\R^n)$ for some $K\in \N$ such that $f$ itself and all
its derivatives satisfy $(D)$. The latter holds, for instance, if
$f$ is compactly supported. Then this function satisfies $(S_K)$ by the
elementary properties of the Fourier transform. Conversely, if a
function $g \in L^2(\R^n)$ satisfies $(S_K)$ for some $K\in(0,\fz)$, then we
have $g\in C^{\lfloor K \rfloor}(\R^n)$. However, in case of certain
wavelet functions $\psi$ where the Fourier transform $\widehat\psi$
is given explicitly, we can verify $(S_K)$ directly.

\begin{lemma}\label{help1} Let $L\in \zz_+$, $K\in(0,\fz)$, and $g,f, f_0 \in L^2(\R^n)$.

   {\rm (i)} Let $g$ satisfy $(D)$ and $(M_{L-1})$, and let $f_0$ satisfy $(D)$ and
    $(S_{K})$. Then for every
   $N\in \N$, there exists a positive constant $C(N)$ such that the
   estimate
   $$
       |(W_g f_0)(x,t)| \le C(N) \frac{t^{(L\wedge K)+n/2}}{(1+|x|)^N}
   $$
   holds true for all $x\in \R^n$ and $t\in(0,1)$\,.

   {\rm (ii)} Let $g,f$ satisfy $(D)$, $(M_{L-1})$ and $(S_K)$. For every $N\in
   \N$, there exists a positive constant $C(N)$ such that the
   estimate
   $$
       |(W_g f)(x,t)| \le
C(N)\frac{t^{(L\wedge K)+n/2}}{(1+t)^{2(L\wedge K)+n}}\lf(1+\frac{|x|}{1+t}
\r)^{-N}
   $$
   holds true for all $x\in \R^n$ and $t\in(0,\infty)$\,.
\end{lemma}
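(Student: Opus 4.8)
The plan is to establish (i) first on the Fourier side, where the moment condition $(M_L)$ and the smoothness condition $(S_K)$ are most transparent, and then to deduce (ii) from (i) via the group symmetry of the wavelet transform. First I would use Parseval together with $\widehat{T_xh}(\xi)=e^{-ix\xi}\widehat h(\xi)$ and $\widehat{\mathcal{D}_t^{L^2}g}(\xi)=t^{n/2}\widehat g(t\xi)$ to write
\[
W_gf_0(x,t)=c_n\,t^{n/2}\int_\rn e^{-ix\xi}\,\widehat g(t\xi)\,\overline{\widehat{f_0}(\xi)}\,d\xi.
\]
Since $g$ satisfies $(D)$ for every $N$, it is rapidly decreasing, so $\widehat g\in C^\fz$, and $(M_{L-1})$ is equivalent to $D^\beta\widehat g(0)=0$ for $\|\beta\|_{\ell^1}\le L-1$; Taylor's formula then yields $|(D^\beta\widehat g)(\eta)|\ls|\eta|^{(L-\|\beta\|_{\ell^1})_+}$ for $|\eta|\le1$ and $|(D^\beta\widehat g)(\eta)|\ls1$ in general (the latter because $x^\beta g\in L^1(\rn)$). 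Likewise $(S_K)$ gives $\widehat{f_0}\in C^\fz$ with $\int_\rn(1+|\xi|)^K|D^\alpha\widehat{f_0}(\xi)|\,d\xi<\fz$ for every $\alpha$.

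Second, to extract the factor $t^{L\wedge K}$ I would split the $\xi$-integral into the regions $\{|t\xi|\le1\}$ and $\{|t\xi|>1\}$. On the first region the flatness $|\widehat g(t\xi)|\ls|t\xi|^L$ produces a factor $t^L$, and the surplus power $|\xi|^{(L-K)_+}\le t^{-(L-K)_+}$ is absorbed there while the weight $(1+|\xi|)^K$ controls the remaining integral; on the second region one uses $1\le(t|\xi|)^{L\wedge K}$ together with $L\wedge K\le K$. In both cases the resulting exponent of $t$ is at least $n/2+L\wedge K$, so for $t\in(0,1)$ this produces the uniform estimate $|W_gf_0(x,t)|\ls t^{n/2+L\wedge K}$.

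Third, to obtain the spatial decay $(1+|x|)^{-N}$, for each $\gamma$ with $\|\gamma\|_{\ell^1}\le N$ I would integrate by parts $\|\gamma\|_{\ell^1}$ times in $\xi$ (the boundary terms vanishing by the integrability in $(S_K)$ and the boundedness of the derivatives of $\widehat g$), converting $x^\gamma$ into $D_\xi^\gamma$ acting on the integrand, and then expand by the Leibniz rule. The main obstacle is to verify that the gain $t^{L\wedge K}$ survives every resulting term $t^{\|\beta\|_{\ell^1}}(D^\beta\widehat g)(t\xi)\,D^{\gamma-\beta}\overline{\widehat{f_0}(\xi)}$: repeating the dyadic split and using $\|\beta\|_{\ell^1}+(L-\|\beta\|_{\ell^1})_+\ge L$ together with $L\wedge K\le K$, one checks that the exponent of $t$ is never below $n/2+L\wedge K$ (since $t<1$, only larger exponents can occur). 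This gives $|x^\gamma W_gf_0(x,t)|\ls t^{n/2+L\wedge K}$ for all $\|\gamma\|_{\ell^1}\le N$, and summing over such $\gamma$ yields (i).

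Finally, for (ii) I would use that $\pi$ is a unitary representation, so that $W_gf(x,t)=\overline{W_fg((x,t)^{-1})}=\overline{W_fg(-x/t,1/t)}$, and hence $|W_gf(x,t)|=|W_fg(-x/t,1/t)|$. Under the hypotheses of (ii) both $g$ and $f$ enjoy $(D)$, $(M_{L-1})$ and $(S_K)$, so (i) applies with either function playing the moment-carrying role. For $t\in(0,1]$ I apply (i) directly and note $1+t\sim1$, which reduces the asserted bound to that of (i). For $t\in(1,\fz)$ I apply (i) to $W_fg$ at the small scale $1/t$, obtaining $|W_gf(x,t)|\ls t^{-(L\wedge K)-n/2}(1+|x|/t)^{-N}$, and then observe that for $t>1$, where $1+t\sim t$, this coincides with the claimed bound. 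Combining the two ranges completes the proof.
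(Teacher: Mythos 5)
Your argument is correct. Note first that the paper does not actually prove this lemma: it imports both Definition \ref{basedef} and the decay estimates verbatim from \cite[Appendix A]{u10}, so there is no in-text proof to match against. Your part (i) is the standard (and, as far as the cited source goes, essentially the same) route: pass to the Fourier side via Parseval, convert $(M_{L-1})$ into the vanishing of $D^\beta\widehat g$ at the origin and hence the Taylor bound $|D^\beta\widehat g(\eta)|\ls|\eta|^{(L-\|\beta\|_{\ell^1})_+}$ for $|\eta|\le1$, split the frequency integral at $|t\xi|=1$, and get the spatial decay by integrating by parts $N$ times and re-running the split on each Leibniz term; the bookkeeping $\|\beta\|_{\ell^1}+(L-\|\beta\|_{\ell^1})_+\ge L$ together with $L\wedge K\le K$ and $t<1$ does indeed keep every term at exponent at least $n/2+L\wedge K$. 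Where you genuinely streamline matters is part (ii): rather than redoing the stationary-phase-free estimate separately for large $t$, you invoke unitarity to write $|W_gf(x,t)|=|W_fg(-x/t,1/t)|$ and apply (i) with the roles of $f$ and $g$ exchanged (legitimate, since in (ii) both functions carry all three properties); a direct check confirms that $t^{-(L\wedge K)-n/2}(1+|x|/t)^{-N}$ for $t>1$ and the bound of (i) for $t\le1$ together are exactly equivalent to the stated two-sided form with the $(1+t)$ factors. The only cosmetic gap is the endpoint $t=1$, which (i) as stated excludes but which your estimate covers by the same computation, and the routine justification of the integration by parts, which follows from $(S_K)$ and the boundedness of all derivatives of $\widehat g$ as you indicate.
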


Recall that  the abstract definitions of the spaces $\mathcal{H}^1_w, A_w$ and $B^p_w$
are from Definition \ref{H1w}.
The following result is a direct consequence of the definition of
$\mathcal{S}_{\infty}(\rn)$ and Lemma \ref{help1}. It states
that for polynomial weights $w$, the spaces $\mathcal{H}^1_w, A_w$ and $B^p_w$
contain a reasonable set of functions, namely, the Schwartz class
$\mathcal{S}_{\infty}(\R^n)$.

\begin{lemma}\label{emb} If the weight function $w(x,t)\geq 1$ satisfies the
condition that
   \begin{equation}\label{5.12}
        w(x,t) \lesssim (1+|x|)^r(t^s+t^{-s'}),\quad x\in \R^n,\ t\in(0,\fz)\,,
    \end{equation}
    for some nonnegative $r,s,s'$, then
    $\mathcal{S}_{\infty}(\R^n) \subset (\mathcal{H}^1_w \cap A_{w} \cap
     B^p_w)\,.$
\end{lemma}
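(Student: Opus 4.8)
The plan is to combine the decay estimate for the continuous wavelet transform in Lemma~\ref{help1}(ii) with two structural facts about $\cs_\fz(\rn)$: every $\vz\in\cs_\fz(\rn)$ satisfies property $(D)$, it satisfies $(M_L)$ for \emph{every} $L\in\zz_+$ (all its moments vanish), and, being Schwartz so that $\widehat{\vz}$ is again Schwartz, it satisfies $(S_K)$ for \emph{every} $K\in(0,\fz)$. Hence, for any $\vz,\psi\in\cs_\fz(\rn)$ and any prescribed $M\in(0,\fz)$ and $N\in\nn$, Lemma~\ref{help1}(ii) (applied with $L\wedge K\ge M$) furnishes a constant $C(M,N)$ such that
\begin{equation}\label{emb-decay}
|W_\psi\vz(x,t)|\le C(M,N)\,\frac{t^{M+n/2}}{(1+t)^{2M+n}}\lf(1+\frac{|x|}{1+t}\r)^{-N}
\end{equation}
for all $x\in\rn$ and $t\in(0,\fz)$. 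Since $M$ and $N$ may be chosen as large as we wish, the entire proof reduces to verifying convergence of a few integrals against the Haar measure $d\mu(x,t)=dx\,dt/t^{n+1}$ and the weight bound \eqref{5.12}.

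First I would settle $A_{w}$ and $\mathcal{H}^1_w$, which involve only plain $L^1_w(\cg)$-norms. For $A_w$ I take $\psi=\vz$ in \eqref{emb-decay}; for $\mathcal{H}^1_w$ I fix once and for all a nonzero $g\in\cs_\fz(\rn)$ (which lies in $A_w$ by the previous step, so that $A_w\ne\{0\}$) and take $\psi=g$. In either case it remains to bound
$$\int_{\cg}|W_\psi\vz|\,w\,d\mu\ls\int_0^\fz\frac{t^{M+n/2}(t^s+t^{-s'})}{(1+t)^{2M+n}}\lf[\int_{\rn}\lf(1+\tfrac{|x|}{1+t}\r)^{-N}(1+|x|)^r\,dx\r]\frac{dt}{t^{n+1}}.$$
After the substitution $x=(1+t)u$ and the elementary bound $1+|x|\le(1+t)(1+|u|)$, the inner spatial integral is controlled by $(1+t)^{n+r}\int_{\rn}(1+|u|)^{r-N}\,du$, which is finite once $N>n+r$. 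The remaining one-dimensional integrand is then comparable to $t^{M-n/2-1}(1+t)^{r-2M}(t^s+t^{-s'})$; inspecting its behaviour as $t\to0^+$ and $t\to\fz$ shows integrability provided $M>n/2+s'$ and $M>r+s-n/2$. Choosing $M$ (hence $L\wedge K$) this large gives $W_\psi\vz\in L^1_w(\cg)$, whence $\vz\in A_w$ and $\vz\in\mathcal{H}^1_w$.

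The main work, and the step I expect to be the genuine obstacle, is $\vz\in B^p_{w}$, where one must pass from the pointwise estimate \eqref{emb-decay} to a bound on the Wiener-Amalgam control function $K(W_\vz\vz,Q,L^\infty)(x,t)=\sup_{(y,s)\in(x,t)Q}|W_\vz\vz(y,s)|$ and only then take its $L^p_w(\cg)$-norm. Here I would exploit the geometry of the $ax+b$-group: since $Q$ is relatively compact, there exist $\rho\in(0,\fz)$ and $c\in[1,\fz)$ with $Q\subset B(0,\rho)\times[c^{-1},c]$, so the product $(x,t)(q_1,q_2)=(x+tq_1,tq_2)$ forces every $(y,s)\in(x,t)Q$ to satisfy $|y-x|\le\rho t$ and $s\in[c^{-1}t,ct]$. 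From these two constraints one reads off the comparabilities $1+s\sim1+t$ and $1+|x|/(1+t)\ls1+|y|/(1+s)$ (the latter via $|x|\le|y|+\rho(1+t)$ together with $1+t\le c(1+s)$), so that the supremum inherits exactly the decay \eqref{emb-decay} up to a constant depending only on $\rho$, $c$ and $N$. Substituting this into $\|K(W_\vz\vz,Q,L^\infty)|L^p_w(\cg)\|^p=\int_{\cg}[K(W_\vz\vz,Q,L^\infty)]^p w^p\,d\mu$, using $(t^s+t^{-s'})^p\ls t^{sp}+t^{-s'p}$, and repeating the two-step integration above (now with all exponents multiplied by $p$) reduces everything to the same convergence criteria; again taking $M$ and $N$ large enough makes the integral finite. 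Thus $W_\vz\vz\in W(L^\infty(\cg),L^p_w(\cg))$, that is $\vz\in B^p_w$, and combining the three inclusions yields $\cs_\fz(\rn)\subset(\mathcal{H}^1_w\cap A_w\cap B^p_w)$.
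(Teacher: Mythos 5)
Your proposal is correct and follows exactly the route the paper intends: the paper gives no detailed proof, merely asserting the lemma as a direct consequence of the definition of $\mathcal{S}_\infty(\R^n)$ (all moments vanish, so $(D)$, $(M_L)$ and $(S_K)$ hold for every $L$ and $K$) together with the decay estimate of Lemma \ref{help1}(ii). Your integral computations, including the treatment of the Wiener--Amalgam control function via the geometry of $(x,t)Q$ on the $ax+b$-group, correctly supply the omitted details.
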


Relation \eqref{5.12} is a kind of the minimal condition which is needed
in order to define coorbit spaces in a reasonable way. Instead of
$(\mathcal{H}^1_w)^{\sim}$, one may use $\mathcal{S}_{\infty}'(\R^n)$ as
reservoir and a radial $g\in\mathcal{S}_{\infty}(\R^n)$ as
admissible vector; see Definition \ref{H1w}. Considering
\eqref{wy}, we have to restrict to such function spaces $Y$ on $\mathcal{G}$
satisfying (i), (ii), (iii) in
Subsection \ref{FSgroup}, where additionally

\begin{enumerate}
\item[(iv)] $w_Y(x,t) \lesssim (1+|x|)^r(t^s+t^{-s'})$ for all
$x\in \R^n$ and $t\in(0,\fz),$ and some nonnegative $r,s,s'$.
\end{enumerate}

We use the spaces defined in
Subsection \ref{d5.1} as spaces $Y$ and obtain the following main results
of this section.

\begin{theorem}\label{t5.1} Let $s\in \R$, $\tau \in [0,\infty)$ and $q\in
(0,\infty]$.

{\rm (i)} If $p\in (0,\infty)$ and $a \in ({n}/{(p\wedge q)},\infty)$,
then
$
        \dot{F}^{s,\tau}_{p,q}(\R^n) = \Co(\dot{P}^{s+n/2-n/q,\tau}_{p,q,a}(\cg))
$
holds true in the sense of equivalent (quasi-)norms.

{\rm (ii)} If $p \in (0,\infty]$ and $a \in ({n}/{p},\infty)$, then
$
        \dot{B}^{s,\tau}_{p,q}(\R^n) = \Co (\dot{L}^{s+n/2-n/q,\tau}_{p,q,a}(\cg))
$
holds true in the sense of equivalent (quasi-)norms.
\end{theorem}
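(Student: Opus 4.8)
The plan is to realize $\dft$ and $\dbt$ as coorbits by verifying that the Peetre-type spaces $\dpt$ and $\dlt$, with the smoothness index shifted to $s+n/2-n/q$, satisfy every hypothesis of the abstract machinery recalled in Section \ref{Classcoo}, and then to identify the resulting coorbit (quasi-)norm with the continuous Peetre maximal function characterizations of $\dft$ and $\dbt$ furnished by Theorems \ref{t3.1} and \ref{t3.2}. I treat (i) in detail; (ii) is the same argument with $\dpt$, $\dft$, the condition $a>n/(p\wedge q)$ and Theorem \ref{t3.1} replaced by $\dlt$, $\dbt$, the condition $a>n/p$ and Theorem \ref{t3.2}.

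First I would check that $Y:=\dot{P}^{s+n/2-n/q,\tau}_{p,q,a}(\cg)$ is a solid quasi-Banach function space on $\cg$ satisfying (i), (ii) and (iii) of Subsection \ref{FSgroup}. Solidity and the containment of $\chi_K$ for compact $K$ are immediate, since enlarging $F$ to some $|H|\ge|F|$ only enlarges the inner Peetre supremum pointwise. The left and right translation invariances, together with explicit operator-norm bounds, are exactly Propositions \ref{p5.1} and \ref{p5.2}. To pass from these bounds on $Y$ to the operator norms on $W(L^\infty(\cg),Y)$ entering the weight $w_Y$ of \eqref{wy}, I would record the self-improving identity $W(L^\infty(\cg),Y)=Y$ with equivalent norms: the extra local supremum in the control function $K(F,Q,L^\infty)$ is absorbed by the Peetre weight $(1+|y|/r)^{-a}$ and the window $t/2\le r\le t$ already built into $Y$. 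Consequently $w_Y$ inherits from \eqref{5.1}--\eqref{5.4} the polynomial control $w_Y(x,t)\ls(1+|x|)^a(t^{\sigma}+t^{-\sigma'})$ demanded by \eqref{5.12} and condition (iv) preceding Theorem \ref{t5.1}.

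Next I would fix the analyzing vector. For a radial $g\in\cs_\fz(\rn)$, suitably normalized, Lemma \ref{emb} yields $g\in\mathcal{H}^1_{w_Y}\cap A_{w_Y}\cap B^p_{w_Y}=B(Y)$, so $g$ is admissible. Since $\cs_\fz(\rn)$ is non-trivial and $\pi$-invariant, and admissibility of $g$ provides the wavelet reproducing formula $V_gf=V_gg\ast V_gf$ on $\cs'_\fz(\rn)=(\cs_\fz)^{\sim}$, Theorem \ref{indep}(ii) applies with reservoir $S=\cs_\fz(\rn)$ and reduces the coorbit norm to
$$\|f|\Co Y\|\sim\|W_gf|Y\|,\qquad \Co Y=\{f\in\cs'_\fz(\rn):\ W_gf\in Y\}.$$
The final step is then the explicit matching of these two norms. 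By \eqref{CWTconv} one has $W_gf(x,t)=t^{n/2}\,\Phi_t\ast\bar f(x)$ with $\Phi:=\overline{g(-\cdot)}$; because $g\in\cs_\fz(\rn)$ annihilates all moments, $\Phi$ satisfies the Tauberian condition \eqref{3.2} for every $R$. Substituting $F=W_gf$ into the definition of $\|\cdot|\dpt\|$ with shifted smoothness $\sigma=s+n/2-n/q$, the factor $t^{n/2}$ converts the measure $t^{-\sigma q}\,dt/t^{n+1}$ into $t^{-sq}\,dt/t$, while the inner supremum over $y\in\rn$ and $t/2\le r\le t$ becomes precisely $(\widetilde{\Phi}_t^*\bar f)_a(x)$ of Remark \ref{r3.1}. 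Thus $\|W_gf|Y\|$ equals $\|\bar f|\dft\|_2$ in its $\widetilde\Phi$-variant, and conjugation invariance of $\dft$ together with Theorem \ref{t3.1} and Remark \ref{r3.1} gives $\|W_gf|Y\|\sim\|f\|_{\dft}$, proving (i).

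The main obstacle I anticipate is the verification of condition (iv) through the weight $w_Y$: one must bound the translation operator norms on the Wiener-Amalgam space $W(L^\infty(\cg),Y)$ rather than on $Y$ itself, which forces the self-improvement identification $W(L^\infty(\cg),Y)=Y$. Establishing that identity cleanly, by dominating the group-local supremum in the control function uniformly over neighborhoods $Q$ by the Peetre maximal operator already present in $Y$, is the delicate point; once it is in place, the reproducing formula and Theorems \ref{t3.1} and \ref{t3.2} make the remaining steps routine bookkeeping.
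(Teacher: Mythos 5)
Your proposal is correct and follows essentially the same route as the paper: the paper's proof likewise rests on the identity $W(L^{\infty}(\cg),Y,Q)=Y$ for the Peetre-type spaces (the "simple calculation" it alludes to), the translation bounds of Proposition \ref{p5.1}, the choice of a radial analyzing vector in $\cs_\infty(\rn)$ admissible by Lemma \ref{emb}, the independence result of Theorem \ref{indep}, and the identification of $\|W_gf|Y\|$ with the $\widetilde{\Phi}$-variant of the continuous Peetre maximal characterization via \eqref{CWTconv}, Theorems \ref{t3.1}--\ref{t3.2} and Remark \ref{r3.1}. You have simply written out in detail the steps the paper cites, including the correct bookkeeping of the smoothness shift $s+n/2-n/q$ against the factor $t^{n/2}$ and the measure $dt/t^{n+1}$.
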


\begin{proof} Observe that $Q = [-1,1]^n \times [1/2,2] \subset
\mathcal{G}$ is a neighborhood of the
identity $e = (0,1) \in \mathcal{G}$. A simple calculation shows that
$\dot{L}^{s+n/2-n/q,\tau}_{p,q,a}(\cg)=
W(L^{\infty}(\mathcal{G}),\dot{L}^{s+n/2-n/q,\tau}_{p,q,a}(\cg),Q)$
and
$\dot{P}^{s+n/2-n/q,\tau}_{p,q,a}(\cg)=
W(L^{\infty}(\mathcal{G}),\dot{P}^{s+n/2-n/q,\tau}_{p,q,a}(\cg),Q)$
in the notions of Section \ref{Classcoo}.
The results {\rm (i)} and {\rm (ii)} are then a direct consequence of
Definition \ref{defcoob}, \eqref{CWTconv}, Lemma \ref{p5.1}, Theorems
\ref{t3.1} and \ref{t3.2}, by taking Remark \ref{r3.1} into account, and the
abstract independence result in Theorem \ref{indep}. This finishes the
proof of Theorem \ref{t5.1}.
\end{proof}

\begin{theorem}\label{t5.2} Let $s\in \R$ and $p\in (1,\infty)$.

{\rm (i)} If $q \in [1,\infty)$, $\tau \in [0,{1}/{(p\vee q)'}]$ and
$a \in (n({1}/{p}+\tau),\infty)$, then
$B\dot{H}^{s,\tau}_{p,q}(\R^n) =
\Co(L\dot{H}^{s+n/2-n/q,\tau}_{p,q,a}(\cg))$
holds true in the sense of equivalent (quasi-)norms.

{\rm (ii)} If $q\in (1,\infty)$, $\tau \in
[0,{1}/{(p\vee q)'}]$ and $a\in
(n({1}/(p\wedge q)+\tau),\infty)$,
then $F\dot{H}^{s,\tau}_{p,q}(\R^n) = \Co
(P\dot{H}^{s+n/2-n/q,\tau}_{p,q,a}(\cg))$
holds true in the sense of equivalent (quasi-)norms.
\end{theorem}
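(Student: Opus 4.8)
The plan is to follow, almost verbatim, the scheme of the proof of Theorem \ref{t5.1}, but with the Peetre-type spaces $\dlt,\dpt$ replaced by their Hausdorff counterparts $\dlh,\dph$ and with the local-mean characterizations of Theorems \ref{t3.3} and \ref{t3.4} (together with Remark \ref{r3.3}) used in place of Theorems \ref{t3.1} and \ref{t3.2}. As before I fix the relatively compact neighborhood $Q=[-1,1]^n\times[1/2,2]$ of the identity $e=(0,1)\in\cg$. The first step is the Wiener-amalgam self-identity
$$L\dot{H}^{s+n/2-n/q,\tau}_{p,q,a}(\cg)=W(L^\fz(\cg),\,L\dot{H}^{s+n/2-n/q,\tau}_{p,q,a}(\cg),\,Q),$$
and the analogous one for $P\dot{H}$. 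This reduces to a direct computation of the control function: for ${\bf x}=(x_0,t_0)\in\cg$ one has $L_{\bf x}\chi_Q(y,r)=\chi_Q(((y-x_0)/t_0,\,r/t_0))$, so $K(F,Q,L^\fz)(x_0,t_0)=\sup_{|z|\le t_0,\,t_0/2\le r\le 2t_0}|F(x_0+z,r)|$, which is comparable to the Peetre-type supremum built into Definition \ref{d5.2} because the factor $(1+|z|/r)^a$ is bounded on the range $|z|\le t_0\sim r$; hence the two (quasi-)norms are equivalent.

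Next I would check that $Y\in\{\dlh,\dph\}$, equipped with the shifted smoothness $s+n/2-n/q$, is an admissible function space in the sense of Subsection \ref{FSgroup}. Solidity and the containment of characteristic functions of compact sets are immediate from Definition \ref{d5.2}, while the left and right translation invariances are precisely the content of Proposition \ref{p5.2}. Moreover, the operator-norm estimates \eqref{5.6}--\eqref{5.9} show that the weight $w_Y$ of \eqref{wy} obeys the polynomial bound \eqref{5.12}, so Lemma \ref{emb} yields $\cs_\fz(\rn)\subset\mathcal{H}^1_{w_Y}\cap A_{w_Y}\cap B^p_{w_Y}$. In particular $B(Y)=B^p_{w_Y}\cap A_{w_Y}$ contains a non-zero real-valued radial $g\in\cs_\fz(\rn)$; choosing $g$ with $\widehat g\ge0$ a radial bump supported in an annulus makes $g$ admissible in the sense of Lemma \ref{help1}, guarantees the reproducing formula $V_gf=V_gg\ast V_gf$ on $\cs_\fz'(\rn)$, and lets me take $S=\cs_\fz(\rn)$, $S^{\sim}=\cs_\fz'(\rn)$ as reservoir in Theorem \ref{indep}(ii).

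The heart of the argument is then to identify the coorbit (quasi-)norm with the Hausdorff (quasi-)norm. With $g$ as above set $\Phi\ev g$; since all moments of $g$ vanish, $\Phi$ satisfies \eqref{3.2} for arbitrarily large $R$ (so $s+n\tau<R+1$ is attainable), and \eqref{CWTconv} gives $|W_gf(x,t)|=t^{n/2}|\Phi_t\ast f(x)|$. Because $r^{n/2}\sim t^{n/2}$ whenever $t/2\le r\le t$, inserting $F=W_gf$ into Definition \ref{d5.2} replaces the inner supremum by $t^{n/2}(\wz\Phi_t^*f)_a(x)$, with $(\wz\Phi_t^*f)_a$ the modified Peetre maximal function of Remark \ref{r3.1}. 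Consequently the coorbit norm of $f$ becomes the modified version of $\|f|\dfh\|_2$ (respectively $\|f|\dbh\|_2$); here the exact smoothness shift $s+n/2-n/q$ arises from balancing the extra factor $t^{nq/2}$ against the change of measure from $dt/t$ to $dt/t^{n+1}$, and the two $\inf_\oz$'s coincide because both infimize over the same nonnegative Borel functions $\oz$ subject to \eqref{2.2}. Invoking Remark \ref{r3.3} to pass between $\Phi$ and $\wz\Phi$, and then Theorems \ref{t3.3} and \ref{t3.4}, I conclude that this coorbit norm is equivalent to $\|f\|_{\dfh}$ (resp. $\|f\|_{\dbh}$).

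Finally I would assemble the pieces. By Theorem \ref{indep}(ii) with reservoir $\cs_\fz'(\rn)$, the space $\Co(P\dot{H}^{s+n/2-n/q,\tau}_{p,q,a}(\cg))$ equals $\{f\in\cs_\fz'(\rn):\ W_gf\in W(L^\fz(\cg),\dph)\}$, which, via the Wiener-amalgam self-identity of the first step, equals $\{f\in\cs_\fz'(\rn):\ W_gf\in\dph\}$ with equivalent norms; the norm identification of the third paragraph then shows this set, with its (quasi-)norm, coincides with $\dfh$, proving (ii). Part (i) is identical with $\dlh$, $\dbh$ and Theorem \ref{t3.4}. The step I expect to require the most care is the admissibility verification of the second paragraph: unlike the $\dlt,\dpt$ case, the translation-boundedness of these spaces genuinely rests on the dilation and translation invariance of the Hausdorff capacities (Lemmas \ref{l5.1}, \ref{l5.2} and \ref{l3.3}) that underlie Proposition \ref{p5.2}, and one must make sure that the infimizing weight $\oz(x,t)$ transforms correctly under the dilation $t\mapsto t/r$ so that condition \eqref{2.2} is preserved; everything else is a routine transcription of the proof of Theorem \ref{t5.1}.
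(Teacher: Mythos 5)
Your proposal is correct and follows essentially the same route as the paper's own (very terse) proof: the Wiener--Amalgam self-identity for $Q=[-1,1]^n\times[1/2,2]$, translation-boundedness from Proposition \ref{p5.2} (resting on Lemmas \ref{l5.1}, \ref{l5.2} and \ref{l3.3}), the identification of the coorbit (quasi-)norm with the Peetre-maximal local-mean norms of Theorems \ref{t3.3} and \ref{t3.4} via \eqref{CWTconv} and Remark \ref{r3.3}, and the independence result of Theorem \ref{indep}. Your write-up merely makes explicit the steps the paper leaves as "a direct consequence."
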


\begin{proof} Similar as in the proof of Theorem \ref{t5.1}, we have
$$L\dot{H}^{s+n/2-n/q,\tau}_{p,q,a}(\cg)=
W(L^{\infty}(\mathcal{G}),L\dot{H}^{s+n/2-n/q,\tau}_{p,q,a}(\cg),Q)$$
and $P\dot{H}^{s+n/2-n/q,\tau}_{p,q,a} (\cg)=
W(L^{\infty}(\mathcal{G}),P\dot{H}^{s+n/2-n/q,\tau}_{p,q,a}(\cg),Q)\,.$
The results {\rm (i)} and {\rm (ii)} are then a direct consequence of Definition
\ref{defcoob},
\eqref{CWTconv}, Lemma \ref{p5.2}, Theorems
\ref{t3.3} and \ref{t3.4}, by taking Remark \ref{r3.3} into account, and the abstract
independence result in Theorem
\ref{indep}. This finishes the proof of Theorem \ref{t5.2}.
\end{proof}

\subsection{Sequence spaces\label{sequhom}}

In the sequel, we consider a compact neighborhood
of the identity element in $\mathcal{G}$ given by $\mathcal{U} \ev
[-1/2,1/2]^n \times [1/2,1]$. Accordingly, we consider the discrete set of
points
$$
  \{x_{j,k} = ( 2^{-j}k,2^{-j})~:~j\in \zz,\ \ k\in \Z^n\}\,.
$$
This family is $\mathcal{U}$-well-spread. Indeed,
if we write
$$
    U_{jk} \ev [(k_1-1/2)2^{-j},(k_1+1/2)2^{-j}]\times \cdots
\times[(k_d-1/2)2^{-j},(k_d+1/2)2^{-j}]\,,
$$
we then have
$
   x_{j,k}\cdot\mathcal{U} = U_{jk} \times [2^{-(j+1)},2^{-j}]\,.
$
We write $\chi_{j,k} \ev \chi_{U_{jk}}$.
According to Definition \ref{sequsp}, we obtain $Y^\sharp$
in this setting as follows.

\begin{definition}\label{d5.4}
Let $Y$ be a function space on $\mathcal{G}$ as above. Put
   $$
Y^{\sharp}\ev\left\{\{\lambda_{j,k}\}_{j\in
\Z,k\in \Z^n}~:~\|\{\lambda_{j,k}\}_{j,k}|Y^{\sharp} \|
\ev
\lf\|\sum\limits_{j,k}|\lambda_{j,k}|\chi_{j,k} \otimes \chi_{[2^{-(j+1)},2^
{-j}]}|Y\r\|<\infty\right\}\,.
   $$
\end{definition}

For the Peetre-type spaces defined in Subsection \ref{s5.1}, we have
the following equivalent representations.

\begin{proposition}\label{prop:sawano-1}
Let $s\in \R$, $\tau \in [0,\infty)$ and $q\in
(0,\infty]$.

{\rm (i)} If $p \in (0,\infty]$ and $a \in ({n}/{p},\infty)$, then
\begin{eqnarray*}
\lf\|\{\lambda_{j,k}\}_{j,k}|(\dlt)^{\sharp}\r\|
\sim\sup_{P\in\cq}\frac1{|P|^\tau}
\lf\{\sum_{\ell=j_P}^\fz2^{\ell(s+n/q)q}\lf[\int_P\lf(
\sum\limits_{k\in\zz^n}|\lambda_{\ell,k}|\chi_{\ell,k}(x)
\r)^p\,dx\r]^{\frac qp}\r\}^{\frac 1q}\,.
\end{eqnarray*}

{\rm (ii)} If $p\in (0,\infty)$ and $a \in ({n}/{(p\wedge q)},\infty)$,
then
\begin{eqnarray*}
\lf\|\{\lambda_{j,k}\}_{j,k}|(\dpt)^{\sharp}\r\|
\sim\sup_{P\in\cq}\frac1{|P|^\tau}
\lf\{\int_P\lf[
\sum_{\ell=j_P}^\fz\sum_{k\in\zz^n}2^{\ell(s+n/q)q}|
\lambda_{\ell,k}|^q\chi_{\ell,k}(x)\r]^{\frac pq}\,dx\r\}^{\frac 1p}\,.
\end{eqnarray*}

\end{proposition}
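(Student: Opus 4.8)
The plan is to recognize each sequence (quasi-)norm as the Peetre-type (quasi-)norm of an explicit step function on $\cg$ and then to discretize in the dilation variable. Fix $\{\lambda_{j,k}\}_{j\in\zz,k\in\zz^n}$ and set, for $\ell\in\zz$ and $x\in\rn$, $g_\ell(x)\ev\sum_{k\in\zz^n}|\lambda_{\ell,k}|\chi_{\ell,k}(x)$, together with the function on $\cg$
$$F(x,t)\ev\sum_{j\in\zz}\sum_{k\in\zz^n}|\lambda_{j,k}|\chi_{j,k}(x)\chi_{[2^{-(j+1)},2^{-j}]}(t)=\sum_{j\in\zz}g_j(x)\chi_{[2^{-(j+1)},2^{-j}]}(t).$$
By Definition \ref{d5.4} we have $\|\{\lambda_{j,k}\}_{j,k}|(\dlt)^\sharp\|=\|F|\dlt\|$ and $\|\{\lambda_{j,k}\}_{j,k}|(\dpt)^\sharp\|=\|F|\dpt\|$, so it suffices to evaluate these two Peetre-type (quasi-)norms.

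First I would analyze the inner Peetre supremum. If $t\in[2^{-(\ell+1)},2^{-\ell}]$, then $t/2\le r\le t$ forces $r\in[2^{-(\ell+2)},2^{-\ell}]$, so $F(x+y,r)$ only sees the two layers $j\in\{\ell,\ell+1\}$, on each of which $r\sim2^{-\ell}$ and hence $1+|y|/r\sim1+2^\ell|y|$. Writing $g_m^\ast(x)\ev\sup_{y\in\rn}\frac{g_m(x+y)}{(1+2^m|y|)^a}$, this gives
$$\sup_{\substack{y\in\rn\\t/2\le r\le t}}\frac{|F(x+y,r)|}{(1+|y|/r)^a}\sim\max\{g_\ell^\ast(x),g_{\ell+1}^\ast(x)\},\quad t\in[2^{-(\ell+1)},2^{-\ell}],$$
with constants depending only on $n$ and $a$. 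Next I would discretize the $t$-integral: on the ring $[2^{-(\ell+1)},2^{-\ell}]$ one has $t^{-sq}\sim2^{\ell sq}$ and $\int_{2^{-(\ell+1)}}^{2^{-\ell}}t^{-n-1}\,dt\sim2^{\ell n}$, so the weight collapses to $2^{\ell sq}2^{\ell n}=2^{\ell(s+n/q)q}$, and only the rings with $\ell\ge j_P$ meet $(0,\ell(P))$. Absorbing the harmless shift $\ell\mapsto\ell+1$ (which alters the $\ell^q$-sums by a bounded factor), I obtain
$$\|F|\dlt\|\sim\sup_{P\in\cq}\frac1{|P|^\tau}\lf\{\sum_{\ell=j_P}^\fz2^{\ell(s+n/q)q}\lf[\int_P(g_\ell^\ast(x))^p\,dx\r]^{q/p}\r\}^{1/q}$$
and, in the same way,
$$\|F|\dpt\|\sim\sup_{P\in\cq}\frac1{|P|^\tau}\lf\{\int_P\lf[\sum_{\ell=j_P}^\fz2^{\ell(s+n/q)q}(g_\ell^\ast(x))^q\r]^{p/q}\,dx\r\}^{1/p}.$$

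It then remains to replace the maximal functions $g_\ell^\ast$ by the plain step functions $g_\ell$ inside these (quasi-)norms, and this is the crux. The bounds ``$\gtrsim$'' are immediate from $g_\ell\le g_\ell^\ast$ (take $y=0$). For ``$\lesssim$'' I would exploit that $g_\ell$ is constant on the cubes $U_{\ell,k}$ to get, choosing $\rho\in(n/a,p)$ in case (i) and $\rho\in(n/a,p\wedge q)$ in case (ii) (possible precisely because $a>n/p$, resp.\ $a>n/(p\wedge q)$), the pointwise majorization
$$[g_\ell^\ast(x)]^\rho\lesssim\int_\rn\frac{2^{\ell n}[g_\ell(w)]^\rho}{(1+2^\ell|x-w|)^{a\rho}}\,dw,\quad x\in\rn,$$
which holds because $a\rho>n$ and reduces everything to the quantity already treated in \eqref{3.6}. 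Indeed, splitting $\rn=3P\cup\bigcup_{\|i\|_{\ell^1}\ge2}(P+i\ell(P))$ into dyadic cubes of the same side length as $P$, the local piece is dominated by $M([g_\ell]^\rho\chi_{3P})$, while each far piece—being again a dyadic cube of the same size—is controlled by the target (quasi-)norm itself, the sum over $i$ converging thanks to the decay $\|i\|_{\ell^1}^{-a\rho}$ with $a\rho>n$. In case (i) one then finishes with the $L^{p/\rho}$-boundedness of $M$ on the local piece, exactly as in the passage \eqref{3.6}--\eqref{3.8}, and in case (ii) with the Fefferman-Stein vector-valued inequality \eqref{2.3} (applicable since $\rho<p\wedge q$). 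The main obstacle is precisely this last step, namely, controlling the non-local part of the Peetre maximal function uniformly in $P$ against the $|P|^{-\tau}$ weight; the restrictions $a>n/p$ and $a>n/(p\wedge q)$ are exactly what make the tails summable, in complete parallel with the proofs of Theorems \ref{t3.1} and \ref{t3.2}.
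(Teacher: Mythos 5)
Your proposal is correct, and up to the key pointwise estimate it runs exactly parallel to the paper's proof: the same function $F$ on $\cg$, the same reduction to the two layers $j\in\{\ell,\ell+1\}$ with $1+|y|/r\sim 1+2^{\ell}|y|$, the same discretization of the $t$-integral producing the weight $2^{\ell(s+n/q)q}$, and your majorization $[g_\ell^\ast(x)]^\rho\lesssim\int_\rn 2^{\ell n}[g_\ell(w)]^\rho(1+2^\ell|x-w|)^{-a\rho}\,dw$ is precisely the content of the paper's Lemma \ref{l5.5} (estimate \eqref{5.13}) summed over $k$, exploiting that $g_\ell$ is a step function on the cubes $U_{\ell,k}$. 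The two arguments part ways only at the very end: to remove the resulting convolution with $2^{\ell n}(1+2^\ell|\cdot|)^{-a\rho}$, the paper simply invokes Lemma \ref{l5.6} (the $t^\ast$-equivalence \eqref{eq:110322-1} for the sequence spaces $\dsat$, quoted from an earlier paper), whereas you re-run the localization machinery of Theorems \ref{t3.1} and \ref{t3.2} — splitting $\rn$ into $3P$ and the translated cubes $P+i\ell(P)$, controlling the local piece by the Hardy--Littlewood or Fefferman--Stein maximal inequality with exponent $p/\rho>1$ (resp. $q/\rho>1$), and summing the tails via $\|i\|_{\ell^1}^{-a\rho}$ with $a\rho>n$. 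Your route is self-contained within this paper and in effect inlines a proof of the special case of Lemma \ref{l5.6} that is needed, at the cost of repeating the $3P$-decomposition bookkeeping (including the observation that $P+i\ell(P)$ has the same side length as $P$, so the $|P|^{-\tau}$ normalization is preserved); the paper's route is shorter but leans on an external sequence-space lemma. Both are valid, and your parameter choices $\rho\in(n/a,p)$ and $\rho\in(n/a,p\wedge q)$ use the hypotheses on $a$ exactly where they are needed.
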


To prove this proposition, we need the following
estimate, which is \cite[(4.30)]{u10}.

\begin{lemma}\label{l5.5}
There exist positive constants $C_1$ and $C_2$ such that, for all $\ell\in\zz$, $k\in\zz^n$
and $x\in\rn$,
\begin{equation}\label{5.13}
      \sup\limits_{y\in\rn}\frac{|\chi_{\ell,k}(x+w)|}{(1+2^{\ell}|w|)^{ar}}
\le C_1
      \frac{1}{(1+2^{\ell}|x-k2^{-\ell}|)^{ar}}\le C_2
\lf(\chi_{\ell,k}\ast \frac{2^{\ell
n}}{(1+2^{\ell}|\cdot|)^{ar}}\r)(x).
\end{equation}
\end{lemma}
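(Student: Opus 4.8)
The plan is to read off both inequalities from elementary geometry, using that $\chi_{\ell,k}$ is the indicator of the cube $U_{\ell k}$, which is centered at $k2^{-\ell}$ and has side length $2^{-\ell}$ (so $|U_{\ell k}|=2^{-\ell n}$ and its half-diameter, the distance from the center to a corner, is $\frac{\sqrt n}2 2^{-\ell}$). First I would observe that, reading the supremum over the shift variable, $\chi_{\ell,k}(x+w)=1$ precisely when $w\in U_{\ell k}-x$, so the smallest admissible $|w|$ equals $\dist(x,U_{\ell k})$ and hence
$$\sup_{w\in\rn}\frac{|\chi_{\ell,k}(x+w)|}{(1+2^\ell|w|)^{ar}}=\frac1{(1+2^\ell\dist(x,U_{\ell k}))^{ar}}.$$
Thus both claimed inequalities reduce to comparing the distance $\dist(x,U_{\ell k})$ with the center distance $|x-k2^{-\ell}|$, the two differing by at most the half-diameter of the cube.

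For the first inequality I would use the triangle inequality $|x-k2^{-\ell}|\le\dist(x,U_{\ell k})+\frac{\sqrt n}2 2^{-\ell}$, which gives $2^\ell\dist(x,U_{\ell k})\ge 2^\ell|x-k2^{-\ell}|-\frac{\sqrt n}2$. When $2^\ell|x-k2^{-\ell}|\ge\sqrt n$ this yields $1+2^\ell\dist(x,U_{\ell k})\ge\frac12(1+2^\ell|x-k2^{-\ell}|)$, while when $2^\ell|x-k2^{-\ell}|<\sqrt n$ the quantity $1+2^\ell|x-k2^{-\ell}|$ is bounded by $1+\sqrt n$ and $1+2^\ell\dist(x,U_{\ell k})\ge1$; in both cases $(1+2^\ell\dist(x,U_{\ell k}))^{-ar}\le C_1(1+2^\ell|x-k2^{-\ell}|)^{-ar}$ with $C_1$ depending only on $n$ and $ar$. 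The only subtlety here is the passage from the additive shift by $\frac{\sqrt n}2$ to a multiplicative comparison of the two $(1+2^\ell\,\cdot\,)$ factors, which is exactly what the two-case split handles, and this is the one place where care is needed.

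For the second inequality I would bound the convolution from below by integrating over the cube itself. For every $y\in U_{\ell k}$ one has $|x-y|\le|x-k2^{-\ell}|+\frac{\sqrt n}2 2^{-\ell}$, hence $1+2^\ell|x-y|\le(1+\frac{\sqrt n}2)(1+2^\ell|x-k2^{-\ell}|)$ and therefore
$$\lf(\chi_{\ell,k}\ast\frac{2^{\ell n}}{(1+2^\ell|\cdot|)^{ar}}\r)(x)
=\int_{U_{\ell k}}\frac{2^{\ell n}}{(1+2^\ell|x-y|)^{ar}}\,dy
\ge\frac{2^{\ell n}|U_{\ell k}|}{(1+\frac{\sqrt n}2)^{ar}}\,\frac1{(1+2^\ell|x-k2^{-\ell}|)^{ar}}.$$
Since $2^{\ell n}|U_{\ell k}|=1$, this is exactly the desired lower bound with $C_2=(1+\frac{\sqrt n}2)^{ar}$, completing the proof. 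No genuinely hard step arises; the whole argument is a careful tracking of the cube's center and diameter, and the main point to get right is that every comparison stays uniform in $\ell$ and $k$ thanks to the scale-matching factor $2^\ell$ built into each $(1+2^\ell\,\cdot\,)$ term.
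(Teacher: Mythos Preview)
Your proof is correct. The paper does not actually prove Lemma~\ref{l5.5}; it simply cites \cite[(4.30)]{u10}. Your argument supplies a complete and elementary justification.

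One small simplification: your two-case split in the first inequality is unnecessary. From $|x-k2^{-\ell}|\le \dist(x,U_{\ell k})+\tfrac{\sqrt n}{2}2^{-\ell}$ you get directly
\[
1+2^{\ell}|x-k2^{-\ell}|
\le 1+2^{\ell}\dist(x,U_{\ell k})+\tfrac{\sqrt n}{2}
\le \Big(1+\tfrac{\sqrt n}{2}\Big)\big(1+2^{\ell}\dist(x,U_{\ell k})\big),
\]
which gives the first inequality in one step with $C_1=(1+\tfrac{\sqrt n}{2})^{ar}$, exactly parallel to how you handled the second inequality. Otherwise the argument is clean and the uniformity in $\ell$ and $k$ is correctly tracked.
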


The following lemma is used
for later consideration, which is \cite[Lemma 3.4]{yy2}.

\begin{lemma}\label{l5.6}
Let $s \in\rr, \, \tau \in [0,\infty)$,
$p,\,q\in (0,\infty]$ and $\lambda>n$.
For a sequence $t\ev\{t_{jk}\}_{j \in \Z, \, k \in \Z^n}$,
define $t^*:=\{t^*_Q\}_{Q \in \cq}$ by setting, for all cubes $Q \in \cq$,
$$
t^*_Q
\ev
\left[\sum_{R \in \cq, \, \ell(R)=\ell(Q)}
\frac{|t_R|^r}{(1+\ell(R)^{-1}|x_R-x_Q|)^\lambda}\right]^{1/r}.
$$
Then
\begin{equation}\label{eq:110322-1}
\|t\|_{\dot{a}^{s,\tau}_{p,q}(\rn)} \sim
\|t^*\|_{\dot{a}^{s,\tau}_{p,q}(\rn)}.
\end{equation}
\end{lemma}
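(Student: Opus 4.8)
The plan is to unwind Definition \ref{d5.4} and reduce the assertion to the kernel-averaging equivalence of Lemma \ref{l5.6}. By Definition \ref{d5.4}, for $Y\in\{\dlt,\dpt\}$ we have $\|\{\lambda_{j,k}\}_{j,k}|Y^{\sharp}\|=\|F|Y\|$ with the step function
$$F(x,t)\ev\sum_{j\in\zz,\,k\in\zz^n}|\lambda_{j,k}|\,\chi_{j,k}(x)\,\chi_{[2^{-(j+1)},2^{-j}]}(t),\quad (x,t)\in\cg.$$
Comparing the two displayed right-hand sides of Proposition \ref{prop:sawano-1} with the definitions of the homogeneous sequence spaces used in Lemma \ref{l5.6}, and using the elementary identity $\int_{2^{-(\ell+1)}}^{2^{-\ell}}t^{-sq}\,\frac{dt}{t^{n+1}}\sim 2^{\ell(s+n/q)q}$ together with the normalisation $|U_{\ell,k}|^{-1/2}=2^{\ell n/2}$, one sees by inspection that these right-hand sides are exactly $\|\{\lambda_{j,k}\}|\dot{b}_{p,q}^{\sigma,\tau}(\rn)\|$ (for part (i)) and $\|\{\lambda_{j,k}\}|\dot{f}_{p,q}^{\sigma,\tau}(\rn)\|$ (for part (ii)), where $\sigma\ev s+n/q-n/2$. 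Hence it suffices to prove $\|F|Y\|\sim\|\{\lambda_{j,k}\}|\dot{a}_{p,q}^{\sigma,\tau}(\rn)\|$, with $\dot a=\dot b$ when $Y=\dlt$ and $\dot a=\dot f$ when $Y=\dpt$; I treat both cases in parallel.

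The lower bound $\|\{\lambda_{j,k}\}|\dot{a}_{p,q}^{\sigma,\tau}(\rn)\|\ls\|F|Y\|$ is the easy direction. Write $G(x,t)\ev\sup_{y\in\rn,\,t/2\le r\le t}|F(x+y,r)|(1+|y|/r)^{-a}$ for the inner supremum. Taking $y=0$ and $r=t$, for a.e. $t\in(2^{-(\ell+1)},2^{-\ell})$ we get $G(x,t)\ge|F(x,t)|=\sum_{k}|\lambda_{\ell,k}|\chi_{\ell,k}(x)$, since on this $t$-layer only the scale $j=\ell$ survives. Because the latter quantity is independent of $t$ on the layer, integrating $t^{-sq}\,\frac{dt}{t^{n+1}}$ over $[2^{-(\ell+1)},2^{-\ell}]$ produces the factor $2^{\ell(s+n/q)q}$, and the disjointness $\big(\sum_k|\lambda_{\ell,k}|\chi_{\ell,k}\big)^q=\sum_k|\lambda_{\ell,k}|^q\chi_{\ell,k}$ recovers, after the remaining $\int_P$ and $\sup_P$, precisely the sequence (quasi-)norm on the right.

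For the upper bound I use the standard power trick. Fix an auxiliary exponent $r_0\in(0,p)$ in case (i) (resp. $r_0\in(0,p\wedge q)$ in case (ii)) with $ar_0>n$; this is possible since $a>n/p$ (resp. $a>n/(p\wedge q)$). On a layer $t\in[2^{-(\ell+1)},2^{-\ell}]$ one has $r\sim 2^{-\ell}$ for $r\in[t/2,t]$, and $F(\cdot,r)$ only charges the scales $j\in\{\ell,\ell+1\}$; using that $u\mapsto u^{r_0}$ is increasing and that the cubes $\{U_{j,k}\}_{k}$ are pairwise disjoint, I get
$$\big[G(x,t)\big]^{r_0}\ls\sum_{j\in\{\ell,\ell+1\}}\sup_{w\in\rn}\frac{\sum_k|\lambda_{j,k}|^{r_0}\chi_{j,k}(x+w)}{(1+2^j|w|)^{ar_0}}.$$
By Lemma \ref{l5.5} (that is, \eqref{5.13} with $ar$ replaced by $ar_0$) each summand is dominated by $\sum_k|\lambda_{j,k}|^{r_0}(1+2^j|x-2^{-j}k|)^{-ar_0}$, which for $x$ ranging over a fixed cube $U_{\ell,k}$ is comparable to $\sum_k(\lambda^{\ast}_{j,k})^{r_0}\chi_{j,k}(x)$, where $\lambda^{\ast}\ev\{\lambda^{\ast}_{j,k}\}$ is built from $\lambda$ exactly as in Lemma \ref{l5.6} with the parameters there chosen as $r\ev r_0$ and exponent $ar_0>n$. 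Inserting this pointwise estimate into $\|F|Y\|$ and performing the same layerwise $t$-integration as above (the harmless shift $j\in\{\ell,\ell+1\}$ only reindexes the scale at the cost of a bounded constant), I arrive at $\|F|Y\|\ls\|\lambda^{\ast}|\dot{a}_{p,q}^{\sigma,\tau}(\rn)\|$.

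Finally, Lemma \ref{l5.6} gives $\|\lambda^{\ast}|\dot{a}_{p,q}^{\sigma,\tau}(\rn)\|\sim\|\lambda|\dot{a}_{p,q}^{\sigma,\tau}(\rn)\|$, and combining this with the two previous paragraphs completes the proof. The one genuinely delicate point is the upper bound: when $a\le n$ the Peetre-type weight $(1+|y|/r)^{-a}$ yields a non-integrable kernel, so the supremum over $(y,r)$ cannot be absorbed directly. The power trick (passing to $r_0<p\wedge q$ with $ar_0>n$) together with Lemma \ref{l5.5} converts this supremum into the discrete average $\lambda^{\ast}$, and Lemma \ref{l5.6} then absorbs that average, including the tails it creates outside the cube $P$, back into the original sequence norm; this is precisely why routing through Lemma \ref{l5.6} is preferable to re-deriving a Fefferman--Stein and localisation argument. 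The remaining steps, namely the two discretisations and the lower bound, are routine.
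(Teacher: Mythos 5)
You have not proved Lemma \ref{l5.6} at all. The statement to be established is the equivalence $\|t\|_{\dot{a}^{s,\tau}_{p,q}(\rn)} \sim \|t^*\|_{\dot{a}^{s,\tau}_{p,q}(\rn)}$ for the shifted sequence $t^*$, but your argument is a proof of a \emph{different} statement, namely Proposition \ref{prop:sawano-1} (the identification of the sequence spaces $(\dlt)^{\sharp}$ and $(\dpt)^{\sharp}$), and its final step is precisely ``Lemma \ref{l5.6} gives $\|\lambda^{\ast}|\dot{a}^{\sigma,\tau}_{p,q}(\rn)\| \sim \|\lambda|\dot{a}^{\sigma,\tau}_{p,q}(\rn)\|$''. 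In other words, you invoke the very lemma you were asked to prove; read as an argument for the lemma it is circular, and read as written it reproduces almost step by step the paper's own proof of Proposition \ref{prop:sawano-1} (discretization of the $t$-integral as in \eqref{5.15}, the two-scale observation \eqref{5.16}, the power trick with $r_0<1\wedge p\wedge q$ and $ar_0>n$, Lemma \ref{l5.5}, and then \eqref{eq:110322-1}). Note that the paper itself offers no proof of Lemma \ref{l5.6} either: it quotes it from \cite[Lemma 3.4]{yy2}. A blind proof attempt therefore had to supply the actual argument, and yours contains none of it.

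For the record, the missing argument runs as follows. The direction $\|t\|\ls\|t^*\|$ is immediate, since the term $R=Q$ in the defining sum gives $|t_Q|\le t^*_Q$. For the converse one shows, for $x\in Q$ with $\ell(Q)=2^{-j}$, the pointwise majorization
$(t^*_Q)^r \ls \sum_{m\in\zz_+} 2^{-m(\lambda-n)}\, M\bigl(\sum_{\ell(R)=2^{-j}}|t_R|^r\chi_R\bigr)(x)$,
obtained by splitting the sum over $R$ into dyadic annuli $|x_R-x_Q|\sim 2^{m-j}$; this is where $\lambda>n$ enters. One then applies the Hardy--Littlewood maximal theorem (Besov case) or the Fefferman--Stein inequality \eqref{2.3} (Triebel--Lizorkin case) with exponents $p/r$ and $q/r$, which forces the restriction $r<p$, respectively $r<p\wedge q$, on the auxiliary exponent $r$ --- a condition the lemma's statement leaves unquantified and which your write-up never confronts for the lemma itself (your $r_0$ is chosen in the service of the proposition, not of \eqref{eq:110322-1}). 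Moreover, for $\tau>0$ there is a genuine localisation issue that your sketch does not touch: in $\|t^*\|_{\dot{a}^{s,\tau}_{p,q}(\rn)}$ the supremum runs over dyadic cubes $P$, while $t^*_Q$ for $Q\subset P$ picks up mass from cubes $R$ of the same scale lying far outside $P$; these tails must be controlled by decomposing $\rn$ into the translates $P+i\ell(P)$ and exploiting the decay $\|i\|_{\ell^1}^{-\lambda}$, exactly as in the proof of Theorem \ref{t3.1} (cf.\ \eqref{3.5}--\eqref{3.8}). Since none of this appears in your proposal, the lemma remains unproved.
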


With the above tools, the proof of Proposition \ref{prop:sawano-1}
is similar to that of \cite[Theorem 4.8]{u10}. For completeness,
we give some details here.

\begin{proof}[Proof of Proposition \ref{prop:sawano-1}]
We only prove (i). The proof for (ii) is
even simpler and the details are omitted.
For all $(x,t)\in\rr_+^{n+1}$, let
$$
    F(x,t) \ev
\sum\limits_{j,k}|\lambda_{j,k}|\chi_{j,k}(x)\chi_{[2^{-(j+1)},2^{
-j}]}(t)\,.
$$
Discretizing the integral over $t$ by $t \sim 2^{-\ell}$, we
obtain
\begin{eqnarray}\label{5.15}
       &&\lf\|F|(\dpt)^{\sharp}\r\| \noz\\
       &&\hs\sim
\sup_{P\in\cq}\frac1{|P|^\tau}
\lf\|\chi_P\lf(\sum\limits_{\ell=j_P}^\fz 2^{\ell (s+n/q)q}
\int\limits_{2^{-(\ell+1)}}^{2^{-\ell}}
\lf[\sup\limits_{\gfz{y \in \rn}{t/2\le r\le t}}\frac{
|F(\cdot+y,r)|}{(1+|y|/r)^a}\r]^q\,\frac{dt}{t}\r)^{1/q}
\r\|_\lp\,.\quad\quad
\end{eqnarray}
With $t\in [2^{-(\ell+1)},2^{-\ell}]$ and $t/2\le r\le t$, we observe
that, for all $x\in\rn$,
\begin{equation}\label{5.16}
    F(x,r)= \sum\limits_{k\in\zz^n}\sum_{i=0}^1|\lambda_{\ell+i,k}|
    \chi_{\ell+i,  k}(x)\chi_{[2^{-(\ell+i+1)}, 2^{-(\ell+i)}]}(r)
\end{equation}
and hence
\begin{eqnarray}\label{5.17}
&&\lf\|F|(\dpt)^{\sharp}\r\|\noz\\
&&\hs\lesssim
\sup_{P\in\cq}\frac1{|P|^\tau}
\lf\|\chi_P\lf(\sum\limits_{\ell=j_P}^\fz2^{\ell (s+n/q)q}
\sup\limits_{y\in\R^n}\lf[\sum\limits_{k\in\zz^n}\frac{|\lambda_{\ell,k}|
\chi_{\ell,k}(\cdot+y)}{(1+2^{\ell}|y|)^a}\r]^q\,
\r)^{1/q}\r\|_\lp\,.\quad
\end{eqnarray}
In order to include also the situation $(p\wedge q) < 1$, we choose
$r>0$ so that $r<(1\wedge p\wedge q)$ and $ar>n$.
Obviously, we then estimate \eqref{5.17} in the following way
\begin{eqnarray}\label{5.18}
\lf\|F|(\dpt)^{\sharp}\r\|&&\lesssim
\sup_{P\in\cq}\frac1{|P|^\tau}\lf\|\chi_P\lf(\sum\limits_{\ell=j_P}^\fz\lf[
       \sum\limits_{k\in\zz^n}2^{\ell (s+n/q)r}|\lambda_{\ell,k}|^r\r.\r.\r.\noz\\
&&\hs\times\lf.\lf.\lf.\sup\limits_{y\in
\R^n}\frac{\chi_{\ell,k}(\cdot+y)}{(1+2^{\ell}|y|)^{ar}}\r]^{q/r}\,
       \r)^{1/q}\r\|_{\lp}\,.
\end{eqnarray}
We continue with the useful estimate \eqref{5.13}.
Since $ar>n$, the functions
$$g_{\ell}(\cdot)\ev 2^{\ell n}(1+2^{\ell}|\cdot|)^{-ar}$$
for all $\ell\in\zz$
belong to $L^1(\R)$ uniformly on $\ell$.
Applying Lemma \ref{l5.5} and putting \eqref{5.13} into
\eqref{5.18}, we obtain
\begin{eqnarray*}
       &&\lf\|F|(\dpt)^{\sharp}\r\|^r \\
       &&\hs\lesssim
\sup_{P\in\cq}\frac1{|P|^{r\tau}}  \lf\|\chi_P\lf\{\sum\limits_{\ell=j_P}^\fz
\lf[g_{\ell}\ast\lf(\sum\limits_{k \in \Z^n}2^{\ell (s+n/q)r }|
       \lambda_{\ell,k}|^r\chi_{\ell,k}\r)\r]^{q/r}\,
       \r\}^{r/q}\r\|_{L^{p/r}(\rn)}\,.
\end{eqnarray*}
If we use \eqref{eq:110322-1}, we then have
$$
 \lf\|F|(\dpt)^{\sharp}\r\| \lesssim
\sup_{P\in\cq}\frac1{|P|^{\tau}}
\lf\|\chi_P\lf(\sum\limits_{\ell=j_P}^\fz\sum\limits_{k \in
\Z^n}2^{\ell (s+n/q)q }|\lambda_{\ell,k}|^q\chi_{\ell,k}\,
   \r)^{1/q}\r\|_{\lp}.
$$

For the estimate from below, we go back to \eqref{5.15}
and observe that, for all $(x,t)\in\rr_+^{n+1}$,
$$
  \sup\limits_{y\in\rn}\frac{|F(x+y,t)|}{(1+2^{\ell}|y|)^a} \geq |F(x,t)|\,,
$$
which results in
\begin{equation}\nonumber
  \begin{split}
    \lf\|F|(\dpt)^{\sharp}\r\|
    &\gtrsim \sup_{P\in\cq}\frac1{|P|^\tau}\lf\|\chi_P
    \lf(\sum\limits_{\ell=j_P}^\fz 2^{\ell (s+n/q)
q}\int\limits_{2^{-(\ell+1)}}^{2^{-\ell}}|F(\cdot,t)|^q\,\frac{dt}{t}\r)^{
1/q}\r\|_{L^p(\R^n)}\,.\\
    &
  \end{split}
\end{equation}
A further application of \eqref{5.16} gives finally
that
$$
 \lf\|F|(\dpt)^{\sharp}\r\|
  \gtrsim \sup_{P\in\cq}\frac1{|P|^\tau}\lf\|\chi_P\lf(\sum\limits_{\ell=j_P}^\fz
  \sum\limits_{k \in
\Z^n}2^{\ell (s+n/q)q }|\lambda_{\ell,k}|^q\chi_{\ell,k}\,
   \r)^{1/q}\r\|_{L^p(\R^n)}\,,
$$
which completes the proof of Proposition \ref{prop:sawano-1}.
\end{proof}

\begin{proposition}\label{p5.4}
Let $s\in \R$ and $p\in (1,\infty)$.

{\rm (i)} If $q \in [1,\infty)$, $\tau \in [0,{1}/{(p\vee q)'}]$ and
$a \in (n({1}/{(p\wedge q)}+\tau),\infty)$, then
\begin{eqnarray*}
&&\lf\|\{\lambda_{j,k}\}_{j,k}|(\dlh)^{\sharp}\r\| \\
&&\hs\sim\inf_{\oz}\lf\{\sum\limits_{j\in\zz}2^{j(s+n/q)q}\lf[\int_\rn\lf(
\sum\limits_{k\in\zz^n}|\lambda_{j,k}|\chi_{j,k}(x) \r)^p[\oz(x,2^{-j})]^{-p}
\,dx\r]^{q/p}\r\}^{1/q}\,,
\end{eqnarray*}
where $\oz$ runs over all nonnegative measurable functions on $\rr^{n+1}_+$
satisfying \eqref{2.2}.

{\rm (ii)} If $q\in (1,\infty)$, $\tau \in
[0,{1}/{(p\vee q)'}]$ and $a\in
(n({1}/{(p\wedge q)}+\tau),\infty)$,
then
\begin{eqnarray*}
&&\lf\|\{\lambda_{j,k}\}_{j,k}|(\dph)^{\sharp}\r\|\\
&&\hs\sim
\inf_{\oz}\lf\{ \int_\rn \lf[\sum_{j\in\zz}2^{j(s+n/q)q}\lf(
\sum\limits_{k\in\zz^n}|\lambda_{j,k}|\chi_{j,k}(x) \r)^q[\oz(x,2^{-j})]^{-q}
\r]^{p/q}\,\,dx\r\}^{1/p}\,,
\end{eqnarray*}
where $\oz$ runs over all nonnegative measurable functions on $\rr^{n+1}_+$
satisfying \eqref{2.2}.
\end{proposition}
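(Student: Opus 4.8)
The plan is to run the proof of Proposition \ref{prop:sawano-1} with the Hausdorff weight $\oz$ carried through every estimate, replacing the non-weighted maximal estimate \eqref{eq:110322-1} by the capacity-based machinery from the proof of Theorem \ref{t3.3}. I would give details only for (ii) (the space $\dph$), the case (i) being analogous with the Fefferman--Stein inequality \eqref{2.3} replaced by the $L^p$-boundedness of the Hardy--Littlewood maximal function applied scale by scale. As in the earlier proof, set
\[
F(x,t)\ev\sum_{j,k}|\lambda_{j,k}|\chi_{j,k}(x)\chi_{[2^{-(j+1)},2^{-j}]}(t),
\]
so that $\|\{\lambda_{j,k}\}_{j,k}|(\dph)^{\sharp}\|=\|F|\dph\|$ by Definition \ref{d5.4}. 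First I would discretize $\int_0^\fz(\cdots)\,dt/t^{n+1}$ into the dyadic pieces $t\in[2^{-(\ell+1)},2^{-\ell}]$, on which $t^{-sq}t^{-n}\sim 2^{\ell(s+n/q)q}$ up to the harmless factor $\int dt/t$; this already produces the weight $2^{\ell(s+n/q)q}$ on the right-hand side. On each such piece the range $t/2\le r\le t$ meets only the levels $\ell$ and $\ell+1$, so \eqref{5.16} reduces the inner supremum to the indicators $\chi_{\ell+i,k}$ with $i\in\{0,1\}$.

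The heart of the argument is to bound the Peetre supremum of $F$ pointwise and to transfer the weight from a spatial neighborhood of $x$ back to $x$. Fix $\rho\in(0,p\wedge q)$ with $(a-n\tau)\rho>n$, which is possible precisely because $a>n(1/(p\wedge q)+\tau)$; note $a\rho>n$ as well. Since the cubes $\{U_{\ell,k}\}_k$ are disjoint, Lemma \ref{l5.5} lets me dominate the $\rho$-th power of $\sup_{y,\,t/2\le r\le t}|F(x+y,r)|/(1+|y|/r)^a$ by the convolution of $\sum_k 2^{\ell(s+n/q)\rho}|\lambda_{\ell,k}|^\rho\chi_{\ell,k}$ with $g_\ell(\cdot)\ev 2^{\ell n}(1+2^\ell|\cdot|)^{-a\rho}\in L^1(\rn)$ uniformly in $\ell$. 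To handle the weight, I take $\wz\oz$ nearly realizing the infimum in the right-hand side, split the $y$-integral into the annuli $|y|\sim 2^{i-\ell}$, $i\ge0$, and on the $i$-th annulus replace $\wz\oz$ by the majorant $\oz_i(x,t)\ev 2^{-in\tau}\sup\{\wz\oz(x',t'):\ |x-x'|<2^{i+1}t',\ t'\sim t\}$. By Lemma \ref{l3.3} with $\beta\sim 2^{i}$, the normalization $2^{-in\tau}$ exactly compensates the capacity growth $\beta^{n\tau(p\vee q)'}$, so each $\oz_i$ again satisfies \eqref{2.2} modulo a uniform constant and is admissible for $\|F|\dph\|$; choosing $\oz_i$ as the weight at base point $x$ cancels the factor $\oz_i(x,t)^\rho$ coming from $\wz\oz(x',t')^\rho\le 2^{in\tau\rho}\oz_i(x,t)^\rho$ on that annulus. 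Inserting these majorants and applying the Fefferman--Stein vector-valued inequality \eqref{2.3} (in the $\rho$-rescaled form on $L^{p/\rho}(\ell^{q/\rho})$), the $i$-th annulus contributes a factor summable in $i$ exactly because $(a-n\tau)\rho>n$. The summation over the annular index $i$ and the two levels is carried out in the quasi-Banach setting by raising the $\dph$-quasi-norm to the power $v=1/(1+(p\vee q)')$ and invoking the $v$-triangle inequality of Remark \ref{r5.1}(ii). This yields $\|F|\dph\|\ls$ the right-hand side.

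The reverse inequality is elementary: for any admissible $\oz$, choosing $y=0$ and $r=t$ in the Peetre supremum gives $\sup_{y,\,t/2\le r\le t}|F(x+y,r)|/(1+|y|/r)^a\ge|F(x,t)|$ pointwise, so the $\dph$-expression for $F$ dominates the corresponding expression with the plain $|F(x,t)|$; a further application of \eqref{5.16} and passage to the infimum over $\oz$ then produces exactly the claimed right-hand side. I expect the main obstacle to be the weight transfer in the second paragraph: the spatial translation in the Peetre maximal function forces one to evaluate $\oz$ on a neighborhood whose size grows with $i$, and it is only through the capacity dilation estimate of Lemma \ref{l3.3} together with the sharp restriction $a>n(1/(p\wedge q)+\tau)$ that the capacity cost $2^{in\tau}$ can be absorbed into a convergent geometric series. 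The bookkeeping of the exponents $\rho$, $i$ and the auxiliary level from \eqref{5.16} mirrors exactly the corresponding estimates in the proof of Theorem \ref{t3.3}.
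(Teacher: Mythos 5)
Your argument is correct, but it resolves the key step differently from the paper. The paper's own proof is a one-line reduction: it says to repeat the proof of Proposition \ref{prop:sawano-1} verbatim, replacing the unweighted discrete convolution estimate \eqref{eq:110322-1} (Lemma \ref{l5.6}) by the Hausdorff-weighted sequence-space analogue \cite[Lemma 7.13]{ysy}, which is quoted as a black box. You instead inline that missing ingredient: after the common skeleton (discretization of $t$, \eqref{5.16}, the pointwise domination via Lemma \ref{l5.5}, and the trivial lower bound at $y=0$), you carry out the weight transfer by hand with the annular decomposition $|y|\sim 2^{i-\ell}$, the renormalized majorants $\oz_i:=2^{-in\tau}\sup\{\wz\oz(x',t'):\ |x-x'|<2^{i+1}t',\ t'\sim t\}$ whose admissibility follows from Lemma \ref{l3.3}, the Fefferman--Stein inequality on $L^{p/\rho}(\ell^{q/\rho})$, and the Aoki--Rolewicz $v$-triangle inequality to sum the resulting geometric series in $i$ --- exactly the machinery of the proof of Theorem \ref{t3.3}. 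Your exponent bookkeeping is right: the $i$-th annulus contributes $2^{-i((a-n\tau)\rho-n)}$, so the hypothesis $a>n(1/(p\wedge q)+\tau)$ is precisely what permits a choice of $\rho\in(0,p\wedge q)$ making this summable while keeping $p/\rho,q/\rho>1$ for Fefferman--Stein; and the disjointness of the cubes $U_{\ell,k}$ justifies taking the $\rho$-th power inside the sum over $k$ without any restriction $\rho\le1$. What your route buys is a proof that is self-contained within this paper (only Lemma \ref{l3.3} and Remark \ref{r5.1} are needed, rather than an external lemma from \cite{ysy}); what it costs is length, since you are in effect reproving the relevant special case of \cite[Lemma 7.13]{ysy}.
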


\begin{proof} The proof of this proposition
is similar to that of Proposition \ref{prop:sawano-1}
except that we use \cite[Lemma 7.13]{ysy}
instead of (\ref{eq:110322-1}), which completes the proof
of Proposition \ref{p5.4}.
\end{proof}

Propositions \ref{prop:sawano-1} and \ref{p5.4} justify the next
definition. Notice that we do not need the parameter $a\in(0,\fz)$ anymore on the
left-hand side.

\begin{definition} \label{def5.18} For all admissible parameters
{\rm (}in particular for $a\gg 1${\rm )},
let $\dot{f}^{s,\tau}_{p,q}
:= (\dot{P}^{s+n/2-n/q,\tau}_{p,q,a}(\cg))^{\sharp}$, $\dot{b}^{s,\tau}_{p,q}
:= (\dot{L}^{s+n/2-n/q,\tau}_{p,q,a}(\cg))^{\sharp}$, $b\dot{h}^{s,\tau}_{p,q}
:= (L\dot{H}^{s+n/2-n/q,\tau}_{p,q,a}(\cg))^{\sharp}$ and
$f\dot{h}^{s,\tau}_{p,q}
:= (P\dot{H}^{s+n/2-n/q,\tau}_{p,q,a}(\cg))^{\sharp}\,.$
\end{definition}

\section{Wavelet characterizations\label{s6}}

In this section, as an application of the coorbit
interpretations in Theorems \ref{t5.1} and
\ref{t5.2}, we obtain wavelet characterizations for the spaces
$\dot{B}^{s,\tau}_{p,q}(\R^n)$, $\dot{F}^{s,\tau}_{p,q}(\R^n)$,
$B\dot{H}^{s,\tau}_{p,q}(\R^n)$ and $F\dot{H}^{s,\tau}_{p,q}(\R^n)$ for the
full range of parameters extending the results in \cite[Section 8.2]{ysy}
where mainly
positive smoothness is considered. We use \emph{biorthogonal wavelet
bases} on $\R^n$ (see, for example, \cite{CoDaFe92,b03}), namely,
a system
\begin{equation}\label{6.1}
\langle \psi^0(\cdot-k), \widetilde{\psi}^0(\cdot-\ell)\rangle =
\delta_{k,\ell}\quad \mbox{and}\quad
\langle{\psi^1(2^{j}\cdot-k),\widetilde{\psi}^1(2^{\ell}\cdot-m)}\rangle =
\delta_{(j,k),(\ell,m)},
\end{equation}
with $k,\,\ell,\,m,\,j\in\zz$,
of scaling functions $(\psi^0, \widetilde{\psi}^0)$ and associated wavelets
$(\psi^1,\widetilde{\psi}^1)$, where $\delta_{k,\ell}=1$
if $k=\ell$ else $\delta_{k,\ell}=0$
and $\delta_{(j,k),(\ell,m)}=1$ if $j=\ell$ and $k=m$ else $\delta_{k,\ell}=0$.
Notice that the latter includes that, for every $f\in
L^2(\R)$,
$$
   f = \sum\limits_{j,k\in \Z} \langle f, \psi^1(2^j\cdot - k) \rangle
\widetilde{\psi}^1(2^j\cdot - k) = \sum\limits_{j,k\in \Z} \langle f,
\widetilde{\psi}^1(2^j\cdot - k) \rangle
\psi^1(2^j\cdot - k)
$$
holds in $L^2(\R)$.
We now construct a basis in $L^2(\R^n)$ by using the well-known tensor product
procedure. Consider the tensor products
$\Psi^c = \bigotimes_{j=1}^n \psi^{c_j}$ and $\widetilde{\Psi}^c =
\bigotimes_{j=1}^n \widetilde{\psi}^{c_j}$, $c\in E :=
\{0,1\}^n\setminus\{(0,...,0)\}$. The following result is well known for
orthonormal wavelets (see, for instance, \cite[Section 5.1]{Wo97}), however
straightforward to prove for biorthogonal wavelets.

\begin{lemma}\label{dwavelet}
For any given biorthogonal system
\eqref{6.1}, for all $f\in L^2(\R^n)$,
\begin{equation}\label{wavelet}
  f = \sum\limits_{c\in E}\sum\limits_{j\in \Z}\sum\limits_{k\in \Z^n} \langle
  f, 2^{\frac{jn}{2}}\Psi^{c}(2^j\cdot-k) \rangle
  2^{\frac{jn}{2}}\widetilde{\Psi}^{c}(2^j\cdot-k)
\end{equation}
with convergence in $L^2(\R^n)$\,.
\end{lemma}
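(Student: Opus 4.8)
The plan is to reduce the $n$-dimensional reconstruction formula \eqref{wavelet} to the one-dimensional biorthogonal multiresolution structure underlying \eqref{6.1} by the standard tensor-product argument. First I would, for each $j\in\zz$, introduce the two one-dimensional operators on $L^2(\rr)$,
\begin{equation*}
P_jg:=\sum_{k\in\zz}\langle g,2^{j/2}\psi^0(2^j\cdot-k)\rangle\,2^{j/2}\widetilde{\psi}^0(2^j\cdot-k),\quad
Q_jg:=\sum_{k\in\zz}\langle g,2^{j/2}\psi^1(2^j\cdot-k)\rangle\,2^{j/2}\widetilde{\psi}^1(2^j\cdot-k),
\end{equation*}
associated, respectively, to the scaling functions and to the wavelets. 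The biorthogonality relations \eqref{6.1}, together with the refinement (two-scale) equations satisfied by a biorthogonal wavelet system, yield the key identity $P_{j+1}=P_j+Q_j$ on $L^2(\rr)$, as well as the strong limits $\lim_{j\to-\infty}P_jg=0$ and $\lim_{j\to+\infty}P_jg=g$ for all $g\in L^2(\rr)$. These are precisely the one-dimensional structural facts whose verification is ``straightforward'' for biorthogonal systems, and I would record them first.

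Next I would tensorize. Since $L^2(\rn)=L^2(\rr)\otimes\cdots\otimes L^2(\rr)$, set $\mathbb{P}_j:=P_j\otimes\cdots\otimes P_j$ ($n$ factors), the projection-type operator onto the $n$-fold scaling space at scale $j$. Writing $T_j^0:=P_j$ and $T_j^1:=Q_j$ and inserting $P_{j+1}=P_j+Q_j$ in each slot, I would expand
\begin{equation*}
\mathbb{P}_{j+1}=(P_j+Q_j)^{\otimes n}=\sum_{c\in\{0,1\}^n}\Big(\bigotimes_{i=1}^nT_j^{c_i}\Big)
=\mathbb{P}_j+\sum_{c\in E}\Big(\bigotimes_{i=1}^nT_j^{c_i}\Big),
\end{equation*}
where the single all-zero term is exactly $\mathbb{P}_j$ and the remaining $2^n-1$ terms are indexed by $E=\{0,1\}^n\setminus\{0\}$. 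A direct computation, using $\Psi^c=\bigotimes_{i=1}^n\psi^{c_i}$ and $\widetilde{\Psi}^c=\bigotimes_{i=1}^n\widetilde{\psi}^{c_i}$ together with the factorization of the $L^2(\rn)$ inner product over elementary tensors, identifies each summand as
\begin{equation*}
\Big(\bigotimes_{i=1}^nT_j^{c_i}\Big)f
=\sum_{k\in\zz^n}\langle f,2^{jn/2}\Psi^c(2^j\cdot-k)\rangle\,2^{jn/2}\widetilde{\Psi}^c(2^j\cdot-k),
\end{equation*}
which is precisely the scale-$j$, type-$c$ block in \eqref{wavelet}. I would then telescope and pass to the limit: for integers $J_1<J_2$ one has $\mathbb{P}_{J_2+1}f-\mathbb{P}_{J_1}f=\sum_{j=J_1}^{J_2}\sum_{c\in E}(\bigotimes_iT_j^{c_i})f$ in $L^2(\rn)$, and letting $J_1\to-\infty$, $J_2\to+\infty$, the tensor-product strong limits $\mathbb{P}_{J_1}f\to0$ and $\mathbb{P}_{J_2+1}f\to f$, inherited from the one-dimensional limits of $P_j$ by a routine approximation on elementary tensors (which are dense in $L^2(\rn)$), deliver \eqref{wavelet} with convergence in $L^2(\rn)$. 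Since $E$ is finite, the interchange of $\sum_{c\in E}$ and $\sum_{j\in\zz}$ to recover the stated ordering is immediate.

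I expect the main obstacle to be the first step: establishing $P_{j+1}=P_j+Q_j$ together with the strong limits of $P_j$ from the data \eqref{6.1}. Note that the one-dimensional reconstruction accompanying \eqref{6.1} is phrased purely through the wavelets $\psi^1,\widetilde{\psi}^1$, whereas the isotropic expansion \eqref{wavelet}, with a single scale $j$ per term and $2^n-1$ wavelet types, genuinely requires the scaling-function projections; the refinement identity is exactly the bridge between the two. Put differently, the content lies in verifying that \eqref{6.1} encodes a biorthogonal multiresolution analysis rather than merely a pair of biorthogonal wavelet bases. Once these one-dimensional facts are in hand, the tensor bookkeeping and the telescoping limit are routine, and the emergence of the index set $E$ and of the single isotropic scale is an automatic consequence of discarding the all-scaling tensor $\mathbb{P}_j$ in the expansion of $\mathbb{P}_{j+1}$.
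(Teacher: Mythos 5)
Your argument is correct and is precisely the standard tensor-product multiresolution argument that the paper itself delegates to the literature (it gives no proof, only the remark that the result is well known for orthonormal wavelets, citing Wojtaszczyk, and ``straightforward to prove'' for biorthogonal ones). Your closing observation --- that \eqref{6.1} as literally stated only records biorthogonality, so the refinement identity $P_{j+1}=P_j+Q_j$ and the strong limits of $P_j$ must be read as part of the assumed biorthogonal MRA structure (as in Cohen--Daubechies--Feauveau) --- is exactly the implicit hypothesis the paper is relying on.
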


Our aim is to
specify, namely, state precise sufficient conditions on $\psi^0$, $\psi^1$,
$\widetilde{\psi}^1$ and $\widetilde{\psi}^0$ such
that the representation \eqref{wavelet} extends to the spaces considered
above. Our method is to apply the abstract Theorem \ref{wbases} to this
specific situation. As the first step we check under which conditions the
functions $\Psi^c$, and $\widetilde{\Psi}^c$ belong to the set $B(Y)$, where $Y$
represents a Peetre-type space from Subsection \ref{s5.1}.

\begin{proposition}\label{propwiener} Let $L\in \N$, $K\in(0,\fz)$, and $\psi^0, \psi^1
\in L^2(\R)$, where $\psi^0$ satisfies $(D)$ and $(S_K)$, and
$\psi^1$ satisfies $(D)$, $(S_K)$ and $(M_{L-1})$\,. Let
$\cg$ be an $ax+b$-group and
$V \ev [-1,1]^n \times (1/2,1] \subset \mathcal{G}$ a neighborhood of the identity
$e\in \mathcal{G}$.
Suppose further that for $r_1,r_2\in \R$, the weight $w$ is given by
$w(x,t) \ev (1+|x|)^{v}(t^{r_2}+t^{-r_1})$ for all $(x,t) \in \mathcal{G}$\,.
If $p\in(0,1]$ and
\begin{equation}\label{6.3}
      L\wedge K>(r_1+{n}/{p}-{n}/{2})\vee (
r_2+v-{n}/{2}),
\end{equation}
then for all $c\in E$,
\begin{equation}\label{6.4}
    \int\limits_{\R^n}\int\limits_{0}^{\infty} \lf[\sup\limits_{(y,s)\in
(x,t)V}|\langle \pi(y,s)\Psi^c,\Psi^c\rangle|
    w(x,t)\r]^p \frac{dt}{t^{d+1}}\,dx<\infty\,.
\end{equation}
\end{proposition}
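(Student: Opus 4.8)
The plan is to recognize the kernel appearing in \eqref{6.4} as a continuous wavelet transform and to control it by the decay estimate of Lemma \ref{help1}(ii). Since $\pi(y,s)=T_y\mathcal{D}_s^{L^2}$, by \eqref{wg} we have $\langle \pi(y,s)\Psi^c,\Psi^c\rangle=W_{\Psi^c}\Psi^c(y,s)$, so it suffices to invoke the decay of $W_g f$ with $g=f=\Psi^c$. First I would check that each tensor product $\Psi^c=\bigotimes_{j=1}^n\psi^{c_j}$ with $c\in E$ still satisfies $(D)$, $(S_K)$ and $(M_{L-1})$. Property $(D)$ follows from $1+|x|\le\prod_{j=1}^n(1+|x_j|)$ together with the one-dimensional decay of each factor; property $(S_K)$ follows likewise from the factorization $\widehat{\Psi^c}=\prod_j\widehat{\psi^{c_j}}$, from $(1+|\xi|)^K\le\prod_j(1+|\xi_j|)^K$, and from the fact that a tensor product of $L^1$-functions lies in $L^1$; and property $(M_{L-1})$ holds because, for $c\in E$, at least one index $j_0$ has $c_{j_0}=1$, so that for every $\az$ with $\|\az\|_{\ell^1}\le L-1$ the factor $\int_{\rr}x_{j_0}^{\az_{j_0}}\psi^1(x_{j_0})\,dx_{j_0}$ vanishes and hence $\int_{\rn}x^\az\Psi^c(x)\,dx=0$. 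Lemma \ref{help1}(ii) then gives, for every $N\in\nn$,
\[
|\langle \pi(y,s)\Psi^c,\Psi^c\rangle|\ls \frac{s^{(L\wedge K)+n/2}}{(1+s)^{2(L\wedge K)+n}}\lf(1+\frac{|y|}{1+s}\r)^{-N}.
\]

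Next I would dispose of the supremum over $(x,t)V$. Since $(x,t)(z,\tau)=(x+tz,t\tau)$, one has $(x,t)V=\{(y,s):|y-x|\le t\sqrt n,\ s\in(t/2,t]\}$, so $s\sim t$ throughout. Splitting into $t\le1$ and $t>1$, using $1+s\sim1$ in the first case and $1+s\sim t$ in the second, and using $(1+|y|)^{-N}\ls(1+|x|)^{-N}$ (respectively $(1+|y|/t)^{-N}\ls(1+|x|/t)^{-N}$), which follow from $|y-x|\le t\sqrt n$, I obtain
\[
\sup_{(y,s)\in(x,t)V}|\langle\pi(y,s)\Psi^c,\Psi^c\rangle|\ls
\begin{cases}
t^{(L\wedge K)+n/2}(1+|x|)^{-N}, & 0<t\le1,\\
t^{-(L\wedge K)-n/2}(1+|x|/t)^{-N}, & t>1.
\end{cases}
\]

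Finally I would insert the weight $w(x,t)=(1+|x|)^v(t^{r_2}+t^{-r_1})$ and integrate, splitting the $t$-integral at $1$. We may assume $r_1,r_2,v\ge0$, this being the only case relevant here (the weights $w_Y$ satisfy condition (iv) of Section \ref{ax+b}); then $t^{r_2}+t^{-r_1}\sim t^{-r_1}$ for $t\le1$ and $\sim t^{r_2}$ for $t>1$. For $t\le1$ the $x$-integral of $(1+|x|)^{(v-N)p}$ converges once $N>v+n/p$, and the surviving factor $\int_0^1 t^{((L\wedge K)+n/2-r_1)p-n-1}\,dt$ is finite exactly when $L\wedge K>r_1+n/p-n/2$. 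For $t>1$ the inner integral satisfies $\int_{\rn}(1+|x|/t)^{-Np}(1+|x|)^{vp}\,dx\ls t^{n+vp}$ (again for $N$ large, via $1+t|u|\le t(1+|u|)$), and the surviving factor $\int_1^\infty t^{(-(L\wedge K)-n/2+v+r_2)p-1}\,dt$ converges exactly when $L\wedge K>r_2+v-n/2$. Both requirements are precisely furnished by the hypothesis \eqref{6.3}, which yields \eqref{6.4}. The genuinely analytic input, the decay of $W_{\Psi^c}\Psi^c$, is already supplied by Lemma \ref{help1}(ii); the main work is the bookkeeping of exponents so that the two convergence conditions land exactly on \eqref{6.3}, and the one structural point not to overlook is that $(M_{L-1})$ for $\Psi^c$ forces $c\neq(0,\dots,0)$.
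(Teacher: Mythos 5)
Your proposal is correct and follows essentially the same route as the paper: identify $\langle\pi(y,s)\Psi^c,\Psi^c\rangle$ with $W_{\Psi^c}\Psi^c(y,s)$, check that the tensor products inherit $(D)$, $(S_K)$ and $(M_{L-1})$ (using $c\neq 0$), apply Lemma \ref{help1}(ii), absorb the supremum over $(x,t)V$ since $s\sim t$ and $|y-x|\lesssim t$ there, and split the integral at $t=1$ to read off the two exponent conditions in \eqref{6.3}. The paper's proof is just a terser version of this; your write-up supplies the bookkeeping the paper leaves implicit, and your explicit remark that $(M_{L-1})$ requires $c\neq(0,\dots,0)$ and that $r_1,r_2,v\ge 0$ in the intended application is consistent with (indeed slightly more careful than) the original.
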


\begin{proof} By the assumptions on the wavelet system and $c \neq 0$, we see
that $\Psi^c$ satisfies $(D)$, $(S_K)$ and $(M_{L-1})$. Lemma \ref{help1}
implies that, for all $N \in \N$, $x\in\rn$ and $t\in(0,\fz)$,
   $$
       |(W_{\Psi^c} \Psi^c)(x,t)| \lesssim
\frac{t^{(L\wedge K)+n/2}}{(1+t)^{2(L\wedge K)+n}}\lf(1+\frac{|x|}{1+t}
\r)^{-N},
   $$
and hence
$$
   \sup\limits_{(y,s) \in (x,t)V} |W_{\Psi^c}\Psi^c(y,s)|
   =\sup\limits_{\substack{|y_i-x_i|\le t \\ t/2\le s\le t}}
|W_{\Psi^c}\Psi^c(y,s)| \lesssim
\frac{t^{(L\wedge K)+n/2}}{(1+t)^{2(L\wedge K)+n}}\lf(1+\frac{|x|}{1+t}
\r)^{-N}\,.
$$
Splitting the integral \eqref{6.4} into $\int_0^1 \int_{\R^n} + \int_1^{\infty}
\int_{\R^n}$ leads to the sufficient conditions \eqref{6.3} to ensure their
finiteness, which completes the proof of Proposition \ref{propwiener}.
\end{proof}

In what follows, we need the quantity
\begin{eqnarray}\label{6.5}
&&M(s,\tau,p,q,p^{\ast},a)\noz\\
&&\hs:=
\lf[s+n\lf(\tau+\frac 1{p^\ast}-\frac 1{p\vee 1}\r)+a\r]\vee
\lf[-s+n\lf(\tau+\frac 1{p^\ast}+\frac 1p-1\r)+2a\r].
\end{eqnarray}

From now on, $(\psi^0,\widetilde{\psi}^0)$ and $(\psi^1, \widetilde{\psi}^1)$ always
denote a biorthogonal wavelet system satisfying \eqref{6.1}. Our main results
read as follows.

\begin{theorem}\label{t6.1}
Let $s\in \R$, $\tau \in [0,\infty)$ and $q\in(0,\infty]$.

{\rm (i)} Let $p\in (0,\infty)$, $\psi^0$ and $\widetilde{\psi}^0$ satisfy $(D)$ and
$(S_K)$, and $\psi^1$ and $\widetilde{\psi}^1$ satisfy $(D)$, $(S_K)$ and $(M_{L-1})$,
where
$(L\wedge K)>M\lf(s,\tau,p,q, 1\wedge p\wedge q,{n}/{
(p\wedge q)}\r)\,$
in view of \eqref{6.5}, then every $f \in \dot{F}^{s,\tau}_{p,q}(\R^n)$
admits the decomposition
\eqref{wavelet}, where the sequence
\begin{equation}\label{6.7}
\lambda^c \ev
\{\lambda^c_{j,k}\}_{j\in \Z,k\in Z^n}\ \text{with}\ \lambda^c_{j,k}:= \langle
  f, 2^{\frac{jn}{2}}\Psi^{c}(2^j\cdot-k) \rangle,\quad j\in \Z,\ k\in
\Z^n\,,
\end{equation}
belongs to the sequence space $\dot{f}^{s,\tau}_{p,q}$.

Conversely, an element
$f\in (\mathcal{H}^1_{w_Y})^{\sim}$, where $Y \ev
\dot{P}^{s+n/2-n/q,\tau}_{p,q,a}(\cg)$, belongs to $\dot{F}^{s,\tau}_{p,q}(\R^n)$ if
the sequence \eqref{6.7} belongs to $\dot{f}^{s,\tau}_{p,q}$.

{\rm (ii)} If $p \in (0,\infty]$, $\psi^0$ and $\widetilde{\psi}^0$ satisfy $(D)$ and
$(S_K)$, and $\psi^1$ and $\widetilde{\psi}^1$ satisfy $(D)$, $(S_K)$ and $(M_{L-1})$,
where
$(L\wedge K)>M\lf(s,\tau,p,q, 1\wedge p\wedge q, {n}/{p}\r)\,$
in view of \eqref{6.5}, then every $f \in
\dot{B}^{s,\tau}_{p,q}(\R^n)$ admits the decomposition
\eqref{wavelet}, where the sequence \eqref{6.7} belongs to the
sequence space $\dot{b}^{s,\tau}_{p,q}$.

Conversely, an element
$f\in (\mathcal{H}^1_{w_Y})^{\sim}$, where $Y \ev
\dot{L}^{s+n/2-n/q,\tau}_{p,q,a}(\cg)$, belongs to
$\dot{B}^{s,\tau}_{p,q}(\R^n)$ if the sequence \eqref{6.7}
belongs to $\dot{b}^{s,\tau}_{p,q}$.
\end{theorem}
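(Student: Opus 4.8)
The plan is to derive Theorem \ref{t6.1} from the abstract discretization machinery of Theorem \ref{wbases} together with the coorbit identifications of Theorem \ref{t5.1}. For part (i) I would fix $Y \ev \dot{P}^{s+n/2-n/q,\tau}_{p,q,a}(\cg)$ with $a\in(n/(p\wedge q),\fz)$, so that Theorem \ref{t5.1}(i) gives $\Co(Y)=\dft$ and Definition \ref{def5.18} gives $Y^{\sharp}=\dsft$. The well-spread family is $X\ev\{x_{j,k}=(2^{-j}k,2^{-j})\}_{j\in\zz,\,k\in\zz^n}$ from Subsection \ref{sequhom}, and since $\pi(2^{-j}k,2^{-j})g=T_{2^{-j}k}\mathcal{D}^{L^2}_{2^{-j}}g=2^{jn/2}g(2^j\cdot-k)$, the abstract atoms $\pi(x_{j,k})\Psi^c$ are precisely the tensor-product wavelet atoms appearing in \eqref{wavelet}. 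Thus the whole statement reduces to verifying the hypotheses of Theorem \ref{wbases} in this concrete realization.

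First I would match the reproducing formula \eqref{waveletexp} with the $L^2$-expansion \eqref{wavelet} of Lemma \ref{dwavelet}, letting the index $r$ run over $c\in E$ with analyzing vectors $\gamma_c\ev\Psi^c$ and synthesizing vectors $g_c\ev\widetilde{\Psi}^c$; the complex conjugation relating $\langle f,2^{jn/2}\Psi^c(2^j\cdot-k)\rangle$ to $\langle\pi(x_{j,k})\Psi^c,f\rangle$ is immaterial since $Y^{\sharp}$ is defined through absolute values of the coefficients. The core membership requirement is $\Psi^c,\widetilde{\Psi}^c\in B(Y)=B^p_{w_Y}\cap A_{w_Y}$. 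Because $c\neq(0,\dots,0)$, each $\Psi^c$ (and $\widetilde{\Psi}^c$) inherits $(D)$, $(S_K)$ and the moment condition $(M_{L-1})$ in at least one coordinate, so Lemma \ref{help1} controls the decay of $W_{\Psi^c}\Psi^c$. Proposition \ref{propwiener} then turns this decay into the Wiener-amalgam integrability \eqref{6.4}, i.e. $\Psi^c\in B^{p}_{w_Y}$, while a parallel but simpler integral estimate from the same decay bound yields $V_{\Psi^c}\Psi^c\in L^1_{w_Y}(\cg)$, i.e. $\Psi^c\in A_{w_Y}$.

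The technical heart, and what I expect to be the main obstacle, is feeding the correct weight into Proposition \ref{propwiener}. Using the translation bounds \eqref{5.1}--\eqref{5.4} of Proposition \ref{p5.1} with smoothness parameter $s+n/2-n/q$, I would compute $w_Y$ from \eqref{wy} and check that $w_Y(x,t)\lesssim(1+|x|)^{v}(t^{r_2}+t^{-r_1})$ with $v=a$ and with $r_1,r_2$ read off by collecting the powers of $t$ coming from the four operator norms (evaluated at $(x,t)$ and at its inverse $(-x/t,1/t)$, with the Haar-module factor $\Delta((x,t)^{-1})=t^{n}$). The delicate part is that the two maxima defining $M$ in \eqref{6.5} arise from the competition between the behaviors as $t\to0^{+}$ and as $t\to\fz$; one must substitute these exponents, together with the $p$-norm exponent $p^{\ast}=1\wedge p\wedge q$ of $Y$ and the value $a=n/(p\wedge q)$, into the abstract condition \eqref{6.3} and verify that it collapses exactly to $(L\wedge K)>M(s,\tau,p,q,1\wedge p\wedge q,n/(p\wedge q))$. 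Reconciling this exponent bookkeeping cleanly is where the real work lies.

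Once $\Psi^c,\widetilde{\Psi}^c\in B(Y)$ is established and the reproducing formula is in place, Theorem \ref{wbases} finishes both directions: the expansion \eqref{wavelet} extends to every $f\in\Co(Y)=\dft$ (unconditional convergence in the weak $\ast$-topology of $(\mathcal{H}^1_{w_Y})^{\sim}$, and in the quasi-norm when the finite sequences are dense in $Y^{\sharp}$), and conversely $f\in(\mathcal{H}^1_{w_Y})^{\sim}$ lies in $\dft$ if and only if the coefficient sequence $\lambda^c$ in \eqref{6.7} lies in $Y^{\sharp}=\dsft$. To phrase the reservoir in terms of $\cs'_\fz(\rn)$ I would invoke Lemma \ref{emb} (whose hypothesis \eqref{5.12} is exactly the polynomial control on $w_Y$ obtained above) together with the independence-of-reservoir statement Theorem \ref{indep}(ii). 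Part (ii) is then identical, replacing $\dot{P}$ by $\dot{L}$, using Theorem \ref{t5.1}(ii), the bounds \eqref{5.1} and \eqref{5.3}, the sequence space $\dsbt$, and the value $a=n/p$.
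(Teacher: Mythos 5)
Your proposal follows essentially the same route as the paper's proof: identify $\dft$ (resp. $\dbt$) as $\Co(Y)$ via Theorem \ref{t5.1}, bound $w_Y$ by $(1+|x|)^a(t^{r_2}+t^{-r_1})$ using Propositions \ref{p5.1} and \ref{p5.2} together with \eqref{wy}, verify $\Psi^c,\widetilde{\Psi}^c\in B(Y)$ through Proposition \ref{propwiener} with the Aoki--Rolewicz exponent $p^{\ast}=1\wedge p\wedge q$, and then invoke the abstract Theorem \ref{wbases}. The exponent bookkeeping you flag as the remaining work is exactly what the paper records in its explicit formulas for $r_1$, $r_2$ and $v=a$, so the proposal is correct and matches the paper's argument.
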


\begin{theorem}\label{t6.2} Let $s\in \R$, $p \in (1,\infty)$, $q \in
[1,\infty)$, and $\tau \in [0,{1}/{(p\vee q)'}]$.

{\rm (i)} Let $\psi^0$ and
$\widetilde{\psi}^0$ satisfy $(D)$ and $(S_K)$, and $\psi^1$ and $\widetilde{\psi}^1$ satisfy
$(D)$, $(S_K)$ and $(M_{L-1})$,
where
$(L\wedge K)>M\lf(s,\tau,p,q,{1}/{((p\vee q)'+1)}, n\lf({1}/{
p}+\tau\r) \r)\,$
in view of \eqref{6.5}. Then every $f \in
B\dot{H}^{s,\tau}_{p,q}(\R^n)$ admits the decomposition
\eqref{wavelet}, where the sequence \eqref{6.7} belongs to the
sequence space $b\dot{h}^{s,\tau}_{p,q}$.

Conversely, an element
$f\!\in\! (\mathcal{H}^1_{w_Y})^{\sim}$, where
$Y\!\ev\!L\dot{H}^{s+n/2-n/q,\tau}_{p,q,a}(\cg),$
belongs to
$B\dot{H}^{s,\tau}_{p,q}(\R^n)$ if the sequence \eqref{6.7}
belongs to $b\dot{h}^{s,\tau}_{p,q}$.

{\rm (ii)} Let $q\in(1,\fz)$, $\psi^0$ and
$\widetilde{\psi}^0$ satisfy $(D)$ and $(S_K)$, and $\psi^1$ and $\widetilde{\psi}^1$ satisfy
$(D)$, $(S_K)$ and $(M_{L-1})$,
where
$(L\wedge K )>M\lf(s,\tau,p,q,{1}/{((p\vee q)'+1)},n\lf({1}/{
(p\wedge q)}+\tau\r) \r)\,$
in view of \eqref{6.5}.
Then every $f \in F\dot{H}^{s,\tau}_{p,q}(\R^n)$ admits the decomposition
\eqref{wavelet}, where the sequence \eqref{6.7}
belongs to the sequence space $f\dot{h}^{s,\tau}_{p,q}$.

Conversely, an element
$f\!\in\!(\mathcal{H}^1_{w_Y})^{\sim}$, where
$Y\!\ev\!P\dot{H}^{s+n/2-n/q,\tau}_{p,q,a}(\cg),$
belongs to $F\dot{H}^{s,\tau}_{p,q}(\R^n)$
if the sequence \eqref{6.7} belongs to $f\dot{h}^{s,\tau}_{p,q}$.
\end{theorem}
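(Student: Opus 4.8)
The plan is to prove Theorem \ref{t6.2} along the same lines as Theorem \ref{t6.1}, by combining the coorbit identification of Theorem \ref{t5.2} with the abstract discretization machinery of Theorem \ref{wbases}. For part (i) I would set $Y\ev\dlh$ with smoothness index $s+n/2-n/q$, and for part (ii) set $Y\ev\dph$; then Theorem \ref{t5.2} gives $\dbh=\Co(Y)$ in case (i) and $\dfh=\Co(Y)$ in case (ii), so everything reduces to discretizing $\Co(Y)$ by the wavelet system. Recalling from Subsection \ref{s5.2} that $\pi(x,t)=T_x\mathcal{D}^{L^2}_t$ and from Subsection \ref{sequhom} the points $x_{j,k}=(2^{-j}k,2^{-j})$, a direct computation gives $\pi(x_{j,k})g=2^{jn/2}g(2^j\cdot-k)$, so the biorthogonal expansion \eqref{wavelet} of Lemma \ref{dwavelet} is exactly the reproducing formula \eqref{waveletexp} demanded by Theorem \ref{wbases}, with analyzing vectors $\gamma_c\ev\Psi^c$, synthesizing vectors $g_c\ev\widetilde\Psi^c$, the index $c\in E$ playing the role of $r$, and coefficients $\lambda^c_{j,k}$ as in \eqref{6.7}.

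The first substantial step is to check that $\Psi^c,\widetilde\Psi^c\in B(Y)=B^p_{w_Y}\cap A_{w_Y}$. For this I would first determine the control weight $w_Y$ from \eqref{wy}. Since the proof of Theorem \ref{t5.2} shows $Y=W(L^\infty(\cg),Y,Q)$, the operator norms in \eqref{wy} are comparable to the translation norms on $Y$ itself, which are precisely the content of Proposition \ref{p5.2}. Inserting the bounds \eqref{5.6}--\eqref{5.9} for $R_{(z,r)}$, $L_{(z,r)}$ and the inverses $(z,r)^{-1}=(-z/r,1/r)$, together with $\Delta((z,r)^{-1})=r^n$ and the smoothness index $s+n/2-n/q$, one verifies that $w_Y(x,t)\ls(1+|x|)^a(t^{r_2}+t^{-r_1})$ for explicit exponents $r_1,r_2$ read off from the powers of $r$; in particular $w_Y$ obeys condition (iv) of Subsection \ref{s5.2}. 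This places us exactly in the hypotheses of Proposition \ref{propwiener} with $v=a$, so that the Wiener-Amalgam finiteness \eqref{6.4} for both $\Psi^c$ and $\widetilde\Psi^c$ holds once the order $L\wedge K$ exceeds the threshold \eqref{6.3}. A routine rearrangement then shows that \eqref{6.3}, for the chosen $r_1,r_2,v,a$, coincides with the stated bound $(L\wedge K)>M(s,\tau,p,q,1/((p\vee q)'+1),\cdot)$, the two arguments of the maximum in \eqref{6.5} corresponding to the regimes $t\ge1$ and $t\le1$ in the integral \eqref{6.4}, and with the last argument $n(1/p+\tau)$ in case (i) and $n(1/(p\wedge q)+\tau)$ in case (ii).

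With $\Psi^c,\widetilde\Psi^c\in B(Y)$ and the reproducing formula established, Theorem \ref{wbases} applies (and by Theorem \ref{indep} the coorbit space does not depend on the particular admissible vector, so the wavelets may serve as analyzing/synthesizing vectors). It yields that $f\in(\mathcal{H}^1_{w_Y})^{\sim}$ lies in $\Co(Y)$ if and only if, for every $c\in E$, the coefficient sequence $\lambda^c=\{\lambda^c_{j,k}\}$ belongs to $Y^{\sharp}$, the expansion \eqref{wavelet} converging at least in the weak $\ast$-topology. It then remains only to identify $Y^{\sharp}$: by Definition \ref{def5.18} one has $(\dlh)^{\sharp}=b\dot{h}^{s,\tau}_{p,q}$ and $(\dph)^{\sharp}=f\dot{h}^{s,\tau}_{p,q}$, while Proposition \ref{p5.4} furnishes the concrete discrete Hausdorff-type description of these (quasi-)norms. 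Combining this with the coorbit equalities $\dbh=\Co(Y)$ and $\dfh=\Co(Y)$ gives both directions of (i) and (ii) and completes the argument.

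I expect the main obstacle to be the weight bookkeeping in the second step, specifically confirming that the infimum over admissible $\omega$ defining the Hausdorff-type spaces does not spoil the polynomial form (iv) of $w_Y$. This is exactly where the dilation and translation invariance of the Hausdorff capacity (Lemmas \ref{l5.1} and \ref{l5.2}) and the capacity inequality of Lemma \ref{l3.3} enter, already packaged into the translation bounds of Proposition \ref{p5.2}. Once $w_Y$ is pinned down, matching \eqref{6.3} to the precise value of $M$ in \eqref{6.5} is only careful arithmetic with the exponents, and the remaining identifications are immediate from Definition \ref{def5.18} and Proposition \ref{p5.4}.
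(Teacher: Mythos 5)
Your proposal is correct and follows essentially the same route as the paper's own proof, which treats Theorems \ref{t6.1} and \ref{t6.2} simultaneously: the coorbit identification of Theorem \ref{t5.2}, the control weight $w_Y$ read off from Proposition \ref{p5.2} via \eqref{wy}, the Wiener-Amalgam admissibility check of Proposition \ref{propwiener} with $v=a$ and the Aoki-Rolewicz exponent $p^{\ast}=1/((p\vee q)'+1)$ from Remark \ref{r5.1}(ii), the abstract discretization Theorem \ref{wbases}, and the sequence-space identification via Definition \ref{def5.18} and Proposition \ref{p5.4}. Your write-up is in fact somewhat more explicit than the paper's (e.g.\ in spelling out the reproducing formula and the matching of \eqref{6.3} with \eqref{6.5}), but the argument is the same.
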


\begin{proof} We prove both theorems simultaneously and start with the
interpretations as coorbits in Theorems \ref{t5.1} and \ref{t5.2}. Using Propositions
\ref{p5.1} and \ref{p5.2}, together with \eqref{wy}, we conclude that, when
$Y$ is one of the spaces $\dot{P}^{s+n/2-n/q,\tau}_{p,q,a}(\cg),\ \dot{L}^{s+n/2-n/q,\tau}_{p,q,a}(\cg),\
L\dot{H}^{s+n/2-n/q,\tau}_{p,q,a}(\cg)$ and $P\dot{H}^{s+n/2-n/q,\tau}_{p,q,a}(\cg),$
the relation
$w_{Y}(x,t) \le (1+|x|)^v(t^{r_2}+t^{-r_1})$
holds true for all $x\in\rn$ and $t\in(0,\fz)$, where
$$r_1 = \max\{s+n\tau+n(1/2-1/p),
-s+n\tau+n(1/p-1/2),s+a-n/2,-s-n/2+a\}\,,$$
$$r_2= \max\{-s+n\tau+n(1/p-1/2),s-n(1/p-1/2)+n\tau,s+n/2,-s+n/2+a\}\,,$$
and $v= a$.
We then use the abstract result in Theorem \ref{wbases}, together with
\eqref{wavelet}, \eqref{4.4} and Proposition \ref{propwiener}. Notice that the
Aoki-Rolewicz-exponent $p^{\ast} = {1}/{((p\vee q)'+1)}$ in
Theorem \ref{t6.2} and
$p^{\ast} = (1\wedge p\wedge q)$ in Theorem \ref{t6.1},
which is needed in
\eqref{4.4} and Proposition \ref{propwiener}, is
given in Remark \ref{r3.2}(ii) and Remark \ref{r5.1}(ii). Taking in addition
the conditions on $a\in(0,\fz)$ in Theorems \ref{t5.1} and \ref{t5.2} into account,
we conclude the proofs of Theorems \ref{t6.1} and \ref{t6.2}.
\end{proof}

\begin{remark}\label{r6.-1}
The wavelet decomposition characterization in terms of the Carleson measures
for $\bmo$ is already obtained (see \cite[p.\,154, Theorem 4]{M92}),
namely, $\bmo$ is characterized
by wavelets in terms of the space
$\dot{f}_{p,2}^{0,1/p}(\rn)$. The conditions on the wavelets used therein are much
weaker than ours; see Theorem \ref{t6.1}(i). Indeed, using the equivalence
$\mathop\mathrm{BMO}({\mathbb R}^n) = F^{0,1/p}_{p,2}({\mathbb R^n})$, where $p$
is close to infinity, we obtain the condition $K,L\in(n,\fz)$ in Theorem \ref{t6.1}.
Thus, we need more smoothness and vanishing moments for the
wavelets. However, for spaces on the real line (namely, $n=1$),
the condition is close to the optimal one in \cite{M92}.
\end{remark}

\begin{remark}\label{r6.0}
Comparing with the \emph{orthogonal wavelet} characterizations of $\dbt$ and
$\dft$ obtained in \cite[Theorem 8.2]{ysy}, therein the parameter $s$
restricts to $s\in(0,\infty)$, we establish the
\emph{biorthogonal wavelet} characterizations of these spaces for all
$s\in\rr$ in Theorem \ref{t6.1}. The wavelet
characterizations for Besov-Hausdorff spaces $\dbh$ and
Triebel-Lizorkin-Hausdorff spaces $\dfh$ in Theorem \ref{t6.2} are
totally new even for some special cases such as Hardy-Hausdorff
spaces.
\end{remark}

\begin{remark}\label{r6.1} Notice that we use more general \emph{biorthogonal
wavelet bases} instead of classical orthogonal wavelet bases as it is done for
instance in \cite[Chapter 3]{t06}, in case of inhomogeneous
classical Besov and Triebel-Lizorkin spaces, and also in
\cite{fjw,HaPi08,ht05, htr05,b05,bh, RaUl10}.
We refer to \cite{cy} for the spaces $Q_\az(\rn)$,
and to \cite{s08} for the Triebel-Lizorkin-Morrey spaces. For some
earlier results with biorthogonal wavelets on classical Besov-Triebel-Lizorkin
spaces, we refer to \cite{Ky03}. Notice also that the conditions on $K$ and $L$
in Theorems \ref{t6.1} and \ref{t6.2} are not optimal. However,
they represent precise sufficient conditions.
\end{remark}

We conclude the paper with the following corollary, which
characterizes the set
of parameters, when the used wavelet system represents an unconditional basis.
Notice that by the abstract Theorem \ref{wbases}, we always have the unconditional
convergence of \eqref{wavelet} in the weak $\ast$-topology induced by
$(\mathcal{H}^1_{w_Y})^{\sim}$. Since the coefficients in the expansion
\eqref{wavelet} are always unique due to the biorthogonality relations
\eqref{6.1}, this can be seen as a sort of the weak unconditional basis. For an
unconditional basis (in the strong sense) we further need the
unconditional convergence of \eqref{wavelet} in the (quasi-)norm of the
respective space.

\begin{corollary}\label{c6.8}
Under the assumptions of Theorems \ref{t6.1} and
\ref{t6.2}, the used wavelet systems represent an unconditional basis in the
respective spaces if and only if $\tau = 0$ and $p,q\in(0,\infty)$.
\end{corollary}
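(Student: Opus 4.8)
The plan is to reduce the statement, via the abstract machinery already assembled, to a pure density question about the four sequence spaces, and then to settle that density question by a separate argument for each of the three obstructions $\tau>0$, $p=\infty$, $q=\infty$. First I would invoke Theorem \ref{wbases}: since the biorthogonality relations \eqref{6.1} make the coefficient functionals in \eqref{6.7} unique, the wavelet expansion \eqref{wavelet} always converges unconditionally in the weak $\ast$-topology, and the coefficient map is an isomorphism of the respective function space onto its sequence space $Y^{\sharp}$. Hence the wavelet system is an unconditional basis (in the strong, i.e. (quasi-)norm sense) \emph{if and only if} \eqref{wavelet} converges unconditionally in (quasi-)norm, which by the last assertion of Theorem \ref{wbases} holds as soon as the finite sequences are dense in $Y^{\sharp}$; conversely, if they are not dense, some $f$ is not the norm-limit of its partial sums. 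Thus it suffices to prove that the finite sequences are dense in $\dot{f}^{s,\tau}_{p,q}$, $\dot{b}^{s,\tau}_{p,q}$, $b\dot{h}^{s,\tau}_{p,q}$ and $f\dot{h}^{s,\tau}_{p,q}$ (Definition \ref{def5.18}) precisely when $\tau=0$ and $p,q\in(0,\infty)$.

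For the sufficiency, I would use the explicit descriptions in Propositions \ref{prop:sawano-1} and \ref{p5.4} together with the identifications $B\dot{H}^{s,0}_{p,q}(\rn)=\dot{B}^s_{p,q}(\rn)$ and $F\dot{H}^{s,0}_{p,q}(\rn)=\dot{F}^s_{p,q}(\rn)$. When $\tau=0$ the supremum over dyadic cubes $P$ collapses (as $\ell(P)\to\infty$ one recovers the full sum over $\zz$ and integral over $\rn$), and on the Hausdorff side the constraint \eqref{2.2} trivializes, so all four sequence norms reduce to the classical $\ell^q(L^p)$- or $L^p(\ell^q)$-type norms of $\dot{b}^{s,0}_{p,q}$ and $\dot{f}^{s,0}_{p,q}$. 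When moreover $p,q\in(0,\infty)$ these norms are absolutely continuous, so for $\lambda\in Y^\sharp$ the truncations $\lambda\,\mathbf{1}_{\{|j|\le J,\,|k|\le K\}}$ converge to $\lambda$ by dominated convergence; hence the finite sequences are dense.

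The necessity splits into two regimes. If $q=\infty$ (or, in the Besov-type case, $p=\infty$), the sequence norm contains a genuine $\ell^\infty$- (respectively $L^\infty$-) component: choosing $\lambda$ with a uniformly nonvanishing contribution at infinitely many scales $\ell$ (respectively on infinitely many cubes) one has that deleting any finite index set leaves the relevant supremum unchanged, so the distance from $\lambda$ to every finite sequence is $\gtrsim 1$ and density fails. If $\tau>0$ (and $p,q<\infty$), I would use the Morrey-type localization structure: place normalized, mutually self-similar ``bumps'' on a family of pairwise disjoint dyadic cubes $P_m$ with $\ell(P_m)=2^{-m}$, each bump calibrated so that, tested against its own $P_m$, it contributes $\sim 1$; the homogeneity of the norm under the dilations/translations encoded in Proposition \ref{p5.1} makes this calibration scale-invariant. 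The boundedness of the global norm on large cubes then reduces to the convergence of $\sum_m 2^{-mn\tau}$, which is exactly where $\tau>0$ is used, while any finite sequence touches only finitely many $P_m$, leaving some $P_{m_0}$ with residual norm $\sim 1$. This produces an uncountable $1$-separated family, so $\dot{b}^{s,\tau}_{p,q}$ and $\dot{f}^{s,\tau}_{p,q}$ are non-separable and the finite sequences cannot be dense.

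The hard part will be the Hausdorff-type spaces $b\dot{h}^{s,\tau}_{p,q}$ and $f\dot{h}^{s,\tau}_{p,q}$ with $\tau>0$: being preduals, they need not be non-separable, so the non-separability shortcut is unavailable and one must exhibit directly an element of finite norm whose truncation tails do not vanish. I would run the same disjoint-bump scheme, but now control the $\inf_\omega$ in the norm by tracking how an admissible weight $\omega$ satisfying \eqref{2.2} transforms under the dilations and translations attached to the cubes $P_m$; the correct bookkeeping is furnished by the capacity dilation and translation identities (Lemmas \ref{l5.1} and \ref{l5.2}) and the maximal estimate for Hausdorff capacities (Lemma \ref{l3.3}). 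The delicate point, and the main obstacle of the whole proof, is to verify that the superposed bumps keep a finite global (quasi-)norm while the capacity side condition \eqref{2.2} is respected across all scales simultaneously, so that the residual-norm lower bound $\gtrsim 1$ survives after removing any finite collection of indices.
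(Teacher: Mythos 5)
Your overall architecture coincides with the paper's: reduce the question to density of the finite sequences in the sequence spaces of Definition \ref{def5.18} via Theorem \ref{wbases}, handle sufficiency by absolute continuity of the ($\tau=0$, $p,q<\infty$) norms, and destroy density for $\tau>0$ with a lacunary family of wavelet coefficients calibrated against the Morrey-type normalization $|P|^{-\tau}$. Where you differ is in the realization of the counterexample: the paper places \emph{all} bumps in a single nested column at one spatial location, taking $f=\sum_{j\ge1}2^{-j(n\tau+s-n/p)}\widetilde\Psi^c(2^j\cdot-(2,\dots,2))$ in \eqref{f2}, computes its sequence norm directly from Proposition \ref{prop:sawano-1}(ii) (the geometric sum $\sum_{\ell\ge N}2^{-\ell pn\tau}\sim 2^{-Nn\tau p}$ exactly cancels $|Q|^{-\tau p}$), and observes that every tail still has norm $\sim1$ on small cubes; you instead distribute self-similar bumps over disjoint cubes $P_m$ of shrinking side length and invoke non-separability. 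Both work for $\dot f^{s,\tau}_{p,q}$ and $\dot b^{s,\tau}_{p,q}$; the paper's single-column version is slightly cleaner because the norm computation is a one-line geometric series and because the same explicit $f$ simultaneously serves as the element of the function space whose expansion fails to converge. On that last point, note that your reduction ``not dense $\Rightarrow$ some $f$ is not the norm-limit of its partial sums'' silently uses the synthesis direction: one must realize the non-approximable sequence as the coefficient sequence of an actual element of $\Co Y$, which is exactly what \eqref{f2} together with the biorthogonality relations \eqref{6.1} accomplishes. Your treatment of $p=\infty$ or $q=\infty$ by the usual $\ell^\infty$-tail argument is fine and is not spelled out in the paper either.

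The one genuine soft spot is the Hausdorff-type necessity for $\tau>0$, which you correctly identify as the hard part but do not close. Your worry is real: for a single column near the origin, an admissible $\omega$ in \eqref{2.2} may be as large as $\sim2^{\ell n\tau}$ at scale $2^{-\ell}$ on a ball of radius $2^{-\ell}$ (its capacity there is only $2^{-\ell n\tau(p\vee q)'}$), so the naive calibration from the non-Hausdorff case is damped by exactly the factor one is trying to exploit, and the coefficients must be re-normalized while checking that the global Choquet budget in \eqref{2.2} cannot absorb \emph{all} scales simultaneously. The paper dismisses this with ``for the other cases, the proof is similar,'' so you are not behind the published argument here, but a complete proof would have to carry out the bookkeeping with Lemmas \ref{l3.3}, \ref{l5.1} and \ref{l5.2} that you only sketch. (Recall also that in Theorem \ref{t6.2} one always has $p\in(1,\infty)$ and $q\in[1,\infty)$, so for the Hausdorff spaces only the obstruction $\tau>0$ is in play.)
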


\begin{proof}  When $\tau=0$ and $p,q\in(0,\infty)$,
this follows from the fact
that the finite sequences form a dense subspace in the respective sequence
space. Let us disprove the unconditional basis property for the space
$\dot{F}^{s,\tau}_{p,q}(\mathbb{R}^n)$. For the other cases, the proof is
similar. If $\tau\in(0,\fz)$, we use the distribution
\begin{equation}\label{f2}
f(\cdot):=\sum_{j=1}^\infty
2^{-j(n\tau+s-n/p)}\widetilde{\Psi}^c(2^j\cdot-(2,2,\cdots,2))
\end{equation}
for some $c\in E$.
Indeed, the right-hand side converges unconditionally in the topology of
$(\mathcal{H}^1_{w_Y})^{\sim}$, where $Y \ev
\dot{P}^{s+n/2-n/q,\tau}_{p,q,a}(\cg)$, and defines an element from the
reservoir space $(\mathcal{H}^1_{w_Y})^{\sim}$. By the
biorthogonality relations \eqref{6.1}, together with Proposition
\ref{prop:sawano-1} and Definition \ref{def5.18}, we see that the sequence
\eqref{6.7} belongs to the respective sequence space $\dot{f}^{s,\tau}_{p,q}(\rn)$.
Indeed, since $\tau\in(0,\fz)$,
we have
\begin{align*}
\|\{\lambda^c_{j,k}\}_{j,k}|\dot{f}^{s,\tau}_{p,q}\|
&\sim
\sup_{Q \in \cq}
\frac{1}{|Q|^\tau}
\left\{
\int_Q
\sum_{\ell = j_Q}^\infty 2^{-\ell p n\tau+\ell n}\chi_{\ell,(2,2,\cdots,2)}(x)\,dx
\right\}^{1/p}\\
&\le
\sup_{\N\in \Z}
\frac{1}{2^{-Nn\tau}}
\left\{
\sum_{\ell = N}^\infty 2^{-\ell pn\tau}
\right\}^{1/p}\le \sup_{\N\in \Z}
2^{Nn\tau}2^{-Nn\tau} \sim 1.
\end{align*}
Hence, by Theorem \ref{t6.1}, $f$ belongs to the space
$\dot{F}^{s,\tau}_{p,q}(\mathbb{R}^n)$ and \eqref{f2} is its biorthogonal
wavelet expansion. By using the equivalent sequence space (quasi-)norm
in Proposition \ref{prop:sawano-1}(ii), we see, by a similar computation, that
\eqref{f2} does not converge in the (quasi-)norm of
$\dot{F}^{s,\tau}_{p,q}(\mathbb{R}^n)$, which completes the proof of Corollary \ref{c6.8}.
\end{proof}

\noindent{\bf Acknowledgements}\quad The authors would like to thank
the anonymous referees and Hans-Georg Feichtinger for making several useful
suggestions which helped to improve the paper and its
readability.

\bigskip

\noindent Yiyu Liang, Dachun Yang (Corresponding author) and Wen Yuan

\medskip

\noindent School of Mathematical Sciences, Beijing Normal University,
Laboratory of Mathematics and Complex Systems, Ministry of
Education, Beijing 100875, People's Republic of China

\smallskip

\noindent {\it E-mails}: \texttt{yyliang@mail.bnu.edu.cn} (Y. Liang)

\hspace{1.13cm}\texttt{dcyang@bnu.edu.cn} (D. Yang)

\hspace{1.13cm}\texttt{wenyuan@bnu.edu.cn} (W. Yuan)

\bigskip

\noindent Yoshihiro Sawano

\medskip

\noindent Department of Mathematics and Information Sciences,
Tokyo Metropolitan University, Minami-Ohsawa 1-1, Hachioji-shi,
Tokyo 192-0397, Japan.

\smallskip

\noindent {\it E-mail}: \texttt{yoshihiro-sawano@celery.ocn.ne.jp}

\bigskip

\noindent Tino Ullrich

\medskip

\noindent  Hausdorff Center for Mathematics \& Institute for Numerical Simulation,
Endenicher Allee 60, 53115 Bonn, Germany

\smallskip

\noindent {\it E-mail}: \texttt{tino.ullrich@hcm.uni-bonn.de}

\end{document}